\newtheorem{prop}{Proposition}[section]
\newtheorem{lem}[prop]{Lemma}
\newtheorem{cor}[prop]{Corollary}
\newtheorem{thm}[prop]{Theorem}
\newtheorem{conj}[prop]{Conjecture}
\theoremstyle{definition}
\newtheorem{rem}[prop]{Remark}
\newtheorem{defi}[prop]{Definition}
\newtheorem{ex}[prop]{Example}
\numberwithin{equation}{section}
\def\G{\Gamma}
\def\H{\mathrm{H}}
\def\Z{\mathbb{Z}}
\def\E{\mathcal{E}}
\def\D{\mathcal{D}}
\def\B{\mathcal{B}}
\def\F{\mathcal{F}}
\begin{document}

\title[A generalization of Heffter arrays]{A generalization of Heffter arrays}

\author[S. Costa]{Simone Costa}
\address{DII/DICATAM - Sez. Matematica, Universit\`a degli Studi di Brescia, Via
Branze 38, I-25123 Brescia, Italy}
\email{simone.costa@unibs.it}

\author[F. Morini]{Fiorenza Morini}
\address{Dipartimento di Scienze Matematiche, Fisiche e Informatiche, Universit\`a di Parma,
Parco Area delle Scienze 53/A, I-43124 Parma, Italy}
\email{fiorenza.morini@unipr.it}

\author[A. Pasotti]{Anita Pasotti}
\address{DICATAM - Sez. Matematica, Universit\`a degli Studi di Brescia, Via
Branze 43, I-25123 Brescia, Italy}
\email{anita.pasotti@unibs.it}

\author[M.A. Pellegrini]{Marco Antonio Pellegrini}
\address{Dipartimento di Matematica e Fisica, Universit\`a Cattolica del Sacro Cuore, Via Musei 41,
I-25121 Brescia, Italy}
\email{marcoantonio.pellegrini@unicatt.it}

\begin{abstract}
In this paper we define a new class of partially filled arrays, called relative Heffter arrays, that are a
generalization of the Heffter arrays introduced by Archdeacon in 2015.
Let $v=2nk+t$ be a positive integer, where $t$ divides $2nk$, and
let $J$ be the subgroup of $\Z_v$ of order $t$.  A $\H_t(m,n; s,k)$ Heffter array  over $\Z_v$ relative to $J$
  is an $m\times n$ partially filled  array
 with elements in $\Z_v$ such that:
(a) each row contains $s$ filled cells and each column contains $k$ filled cells;
(b) for every $x\in \Z_v\setminus J$, either $x$ or $-x$ appears in the array;
(c) the elements in every row and column sum to $0$.
Here we study the existence of square integer
(i.e. with entries chosen in $\pm\left\{1,\dots,\left\lfloor \frac{2nk+t}{2}\right\rfloor \right\}$ and where the sums are zero in $\mathbb{Z}$)
relative Heffter arrays for $t=k$, denoted by $\H_k(n;k)$. In particular, we prove that
for $3\leq k\leq n$, with $k\neq 5$,
  there exists an integer $\H_k(n;k)$ if and only if one of the following holds:
(a) $k$ is odd and $n\equiv 0,3\pmod 4$;
(b) $k\equiv 2\pmod 4$ and $n$ is even;
(c) $k\equiv 0\pmod 4$.
Also, we show how these arrays give rise to cyclic cycle decompositions of the complete multipartite graph.
\end{abstract}

\keywords{Heffter array, orthogonal cyclic cycle decomposition, multipartite complete graph}
\subjclass[2010]{05B20; 05B30}

\maketitle

\section{Introduction}\label{sec:Intro}
An $m \times n$  partially filled (p.f., for short) array on a set $\Omega$ is an $m \times n$ matrix
whose elements belong to $\Omega$ and where we also allow some cells to be empty.
An interesting class of p.f. arrays, called Heffter arrays, has been introduced by Dan Archdeacon in \cite{A}.

\begin{defi}\label{def:H}
A \emph{Heffter array} $\H(m,n; s,k)$ is an $m \times n$ p.f. array with elements in $\Z_{2nk+1}$ such that
\begin{itemize}
\item[(\rm{a})] each row contains $s$ filled cells and each column contains $k$ filled cells;
\item[(\rm{b})] for every $x\in \Z_{2nk+1}\setminus\{0\}$, either $x$ or $-x$ appears in the array;
\item[(\rm{c})] the elements in every row and column sum to $0$ (in $\Z_{2nk+1}$).
\end{itemize}
\end{defi}

Trivial necessary conditions for the existence of an $\H(m,n; s,k)$ are $ms=nk$, $3\leq s \leq n$ and $3\leq k \leq m$.
Hence if the Heffter array is square, namely if $m=n$, then $s=k$; such an array will be denoted by $\H(n;k)$.
A Heffter array is called \emph{integer} if Condition (c) in Definition \ref{def:H} is strengthened so that the
elements in every row and in every
column, viewed as integers in $\{\pm 1, \ldots, \pm nk\}$, sum to zero in $\Z$.

Heffter arrays are considered interesting and worthy of study in their own right together with their vast variety of applications.
In fact, there are some recent papers in which they are investigated since they allow to obtain new biembeddings
(see \cite{A,CDDYbiem, CMPPHeffter, DM}), while other ones completely solve the existence problem of square Heffter arrays
(see \cite{ABD, ADDY,  BCDY, CDDY,  DW}). In particular, in \cite{ADDY,DW} the authors verify the existence of a square integer Heffter array for all admissible orders,
proving the following theorem.
\begin{thm}
  There exists an integer $\H(n;k)$ if and only if $3\leq k\leq n$ and $nk\equiv 0,3\pmod 4$.
\end{thm}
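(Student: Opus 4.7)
The plan is to establish necessity by a short counting argument and sufficiency by an explicit construction program.

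For necessity, the bounds $3 \leq k \leq n$ are immediate from Definition~\ref{def:H} since the array is $n \times n$ with $k$ filled cells in each row and column. For the congruence, let $P$ and $N$ denote the sum of the positive entries and the absolute sum of the negative entries, respectively. Since exactly one of $\{x,-x\}$ appears for each $x \in \{1,\dots,nk\}$ and every row sums to zero in $\Z$, one gets $P=N$ and $P+N = nk(nk+1)/2$, which forces $4 \mid nk(nk+1)$ and hence $nk \equiv 0, 3 \pmod 4$.

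For sufficiency I would work case by case on the residue of $k$ modulo $4$. A natural template is the cyclic diagonal layout: the filled cells of row $i$ sit in columns $i, i+1, \dots, i+k-1$ modulo $n$. This reduces the problem to choosing $k$ integer sequences of length $n$, one per diagonal, whose union together with their negatives partitions $\{\pm 1,\dots,\pm nk\}$ and for which all row and column sums vanish in $\Z$. Many of these sum conditions are automatic by the cyclic symmetry, and the remainder can be met by building each diagonal from Skolem- or Langford-type difference sequences. After producing base cases for small $k$ directly or by computer search, a composition step lifts the family to arbitrary admissible $k$: integer arrays of the form $\H(n;k_1)$ and $\H(n;k_2)$ built on carefully chosen disjoint subsets of $\{\pm 1,\dots,\pm n(k_1+k_2)\}$ can be juxtaposed diagonal by diagonal to produce an integer $\H(n;k_1+k_2)$.

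The main obstacle lies at the low end of $k$ and at the boundary $n = k$. The small-$k$ cases ($k \in \{3,4,5,6\}$) cannot be obtained by composition and must be built from scratch for every admissible class of $n \bmod 4$, which is where the Skolem-type tools are stretched the most. The square cases $n=k$ and near-square cases $n=k+1$ are also delicate because the cyclic diagonal template offers essentially no room to shuffle entries; they will most likely require tailored constructions, and verifying that these mesh consistently with the composition step so as to cover every admissible pair is where the bulk of the effort will go.
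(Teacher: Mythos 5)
First, a point of orientation: the paper does not actually prove this theorem --- it quotes it from the literature (the papers cited as \cite{ADDY} and \cite{DW}) as known background, so there is no internal proof to compare against; your argument has to stand on its own. Your necessity direction does stand: the count $P=N$ together with $P+N=nk(nk+1)/2$ forces $4\mid nk(nk+1)$, and since $nk$ and $nk+1$ have opposite parities this gives $nk\equiv 0,3\pmod 4$; this is the standard argument and it is correct. (The bound $k\geq 3$ deserves one more sentence than you give it: $k=1$ would force a zero entry, and $k=2$ would force both $x$ and $-x$ into one row, contradicting the fact that exactly one element of each pair $\{x,-x\}$ occurs in the array.)

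The sufficiency direction, however, is a research programme rather than a proof, and this is a genuine gap. The architecture you describe is indeed the right one --- it is essentially what \cite{ADDY,DW} do, and what the present paper mirrors for the relative case: direct cyclically diagonal constructions for small $k$ (Section \ref{sec:constructions} and Proposition \ref{prop:4}) plus a shiftable-block composition step (Theorem \ref{thm:Ext4} and Theorem \ref{prop:ExtDiag}) that increases $k$. But none of the content is supplied. No Skolem- or Langford-type sequences are exhibited; no base arrays for $k\in\{3,4,5,6\}$ are constructed; the claim that ``many of these sum conditions are automatic by the cyclic symmetry'' is not true without ad hoc corrections near the wrap-around (every diagonal construction in Section \ref{sec:constructions} needs several specially chosen entries precisely for this reason); and the composition step is delicate in a way you do not address --- the two arrays must have supports that interleave so that a shift of one by a constant fills exactly the complement of the other inside $[1,n(k_1+k_2)]$, which is a strong arithmetic constraint on the supports, not something that ``carefully chosen disjoint subsets'' automatically satisfy. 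You yourself flag that the small-$k$ and near-square cases will ``require tailored constructions'' and that this is ``where the bulk of the effort will go''; that effort \emph{is} the proof. As written, the existence half of the theorem is not established.
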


In this paper we introduce a new class of p.f. arrays, which is a natural generalization of Heffter
arrays.

\begin{defi}\label{def:RelativeH}
Let $v=2nk+t$ be a positive integer, where $t$ divides $2nk$, and let $J$ be the subgroup of $\Z_v$ of order $t$.
 A $\H_t(m,n; s,k)$ \emph{Heffter array  over $\Z_v$ relative to $J$} is an $m\times n$ p.f.  array
 with elements in $\Z_v$ such that:
\begin{itemize}
\item[($\rm{a_1})$] each row contains $s$ filled cells and each column contains $k$ filled cells;
\item[($\rm{b_1})$] for every $x\in \Z_{v}\setminus J$, either $x$ or $-x$ appears in the array;
\item[($\rm{c_1})$] the elements in every row and column sum to $0$ (in $\Z_v$).
\end{itemize}
\end{defi}
If $\H_t(m,n; s,k)$ is a square array, it will be denoted by $\H_t(n;k)$.
Clearly, if $t=1$, namely if $J$ is the trivial subgroup of $\Z_{2nk+1}$, we find again the classical concept of Heffter array.
A relative Heffter array is called \emph{integer} if Condition ($\rm{c_1}$) in Definition \ref{def:RelativeH} is
strengthened so that the elements in every row and in every
column, viewed as integers in
$\pm\left\{ 1, \ldots, \left\lfloor \frac{2nk+t}{2}\right\rfloor \right\} ,$
sum to zero in $\Z$.
The \emph{support} of an integer Heffter array $A$, denoted by $supp(A)$, is defined to be the set of the absolute values of the elements
contained in $A$.
It is immediate to see that an integer $\H_2(n;k)$ is nothing but an integer $\H(n;k)$, since in both cases the support is $\{1,2,\ldots,nk\}$.

\begin{ex}\label{ex:43}
The following are integer  relative Heffter arrays $\H_{16}(4;4)$ and  $\H_{32}(4;$ $4)$, respectively.
\begin{footnotesize}
 $$\begin{array}{|c|c|c|c|}\hline
   1&   -7&  -16  &  22   \ \\ \hline
   23 &   2 &   -8 &  -17  \\ \hline
  -13 &    19 &   4 &    -10 \\ \hline
  -11 &   -14 &    20 &  5 \\ \hline
  \end{array}
  \hspace{2cm}
  \begin{array}{|c|c|c|c|}\hline
   1&   -9&  -21  &  29   \ \\ \hline
   31 &   3 &   -11 &  -23  \\ \hline
  -17 &    25 &   5 &    -13 \\ \hline
  -15 &   -19 &    27 &  7 \\ \hline
  \end{array}$$
\end{footnotesize}
\end{ex}

Here we investigate the existence problem of this new class of arrays in the square integer case.
In Section \ref{sec:relativeDF} we will describe the relationship between relative Heffter arrays and \emph{relative difference families}, see \cite{AB},
which are very useful tools to obtain regular graph decompositions.
In fact, many known results about regular decompositions of the complete graph and of the complete multipartite graph have been obtained thanks to difference families and to relative difference families, respectively.
From this relationship it follows that starting from a relative Heffter array it is possible to construct a pair of orthogonal cyclic cycle decompositions of the complete
multipartite graph, as we will explain in details in the same section.
In Section \ref{sec:necessary} we will determine some necessary conditions for the existence of an
integer relative Heffter array $\H_t(n;k)$.
In Section \ref{sec:extension} we will present a result which reduces the existence problem of an integer $\H_k(n;k)$
to the case $3\leq k\leq 6$,
then in Section \ref{sec:constructions} we will present direct constructions for these basic cases.
The results of these two sections allow us to present  an almost complete result which can be summarized as follows.
\begin{thm}\label{thm:esistenza}
Let $3\leq k\leq n$ with $k\neq 5$.
  There exists an integer $\H_k(n;k)$ if and only if one of the following holds:
  \begin{itemize}
    \item $k$ is odd and $n\equiv 0,3\pmod 4$;
    \item $k\equiv 2\pmod 4$ and $n$ is even;
    \item $k\equiv 0\pmod 4$.
  \end{itemize}
Furthermore, there exists an integer $\H_5(n;5)$ if
$n\equiv 3\pmod 4$ and it does not exist if $n\equiv 1,2\pmod 4$.
\end{thm}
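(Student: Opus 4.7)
The statement comes with three claims: a necessary condition on $(k,n)$, a constructive existence result whenever that condition is met (and $k\neq 5$), and a partial result at $k=5$. I would prove them in that order.

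\emph{Necessity (Section~\ref{sec:necessary}).} With $t=k$ we have $v=k(2n+1)$, the subgroup $J$ consists of the multiples of $2n+1$, and the support of a prospective integer $\H_k(n;k)$ is exactly
$$\left\{1,\dots,\left\lfloor k(2n+1)/2\right\rfloor\right\}\setminus\left\{j(2n+1):1\le j\le\lfloor k/2\rfloor\right\}.$$
Since every row sums to $0$ over $\Z$, the total sum $\Sigma$ of the full support must be even. Computing $\Sigma\bmod 2$ by case-splitting on $k\bmod 4$ and $n\bmod 2$ yields exactly the three listed congruence conditions, and in particular rules out $n\equiv 1,2\pmod 4$ when $k=5$.

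\emph{Sufficiency for $k\geq 7$ (Section~\ref{sec:extension}).} The plan is to reduce to small $k$. Starting from an integer $\H_k(n;k)$, I would construct a larger integer $\H_{k+4}(n;k+4)$ by appending four new rows and four new columns whose entries are chosen from the additional portion of the support so that every new row and column sum vanishes, no newly added entry lies in the enlarged subgroup $J$, and each pair $\{x,-x\}$ still contributes exactly one representative. Since adding $4$ preserves the admissibility class of $k$ modulo $4$, iterating this step brings every case with $k\geq 7$ down to one of the base cases $k\in\{3,4,6\}$ (or $k=5$).

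\emph{Base cases and $k=5$ (Section~\ref{sec:constructions}).} For $k\in\{3,4,6\}$ I would produce explicit array families, in the spirit of Example~\ref{ex:43}, one for each admissible residue class of $n\bmod 4$. For $k=5$ the same direct approach yields a construction when $n\equiv 3\pmod 4$, while Step~1 already forbids $n\equiv 1,2\pmod 4$; the residue $n\equiv 0\pmod 4$ is left untreated, which is why the statement at $k=5$ is only partial.

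\emph{Main obstacle.} The hardest ingredient will be the base-case constructions: one has to simultaneously fill exactly $k$ cells in every row and every column, partition the prescribed support into signed entries, and make every row and every column sum vanish over $\Z$, all while avoiding the multiples of $2n+1$ that form $J$. Because these forbidden elements sit in the middle of the support and interact with the row/column arithmetic differently according to $n\bmod 4$, each base case will inevitably split into several sub-patterns, and verifying that the sub-patterns together cover all admissible $n$ is the delicate combinatorial bookkeeping on which the entire induction in Step~2 rests.
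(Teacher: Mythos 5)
Your necessity step is fine and matches the paper's: the parity of the total sum of the prescribed support equals the parity of the number of odd elements in it, and the paper's Proposition \ref{prop:necc} / Corollary \ref{condnecc} is exactly this count. The base-case step is also the right plan. But your reduction step contains a genuine error. An $\H_{k+4}(n;k+4)$ is still an $n\times n$ array; the parameter being incremented is the number of filled cells per row and column, not the size. Appending four new rows and columns produces an $(n+4)\times(n+4)$ array, leaves the original rows with only $k$ filled cells, and --- worse --- changes the underlying group from $\Z_{k(2n+1)}$ to $\Z_{(k+4)(2n+9)}$, so the old entries no longer avoid the new forbidden set (the multiples of $2n+9$ rather than of $2n+1$). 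The correct move, which the paper makes in Theorem \ref{thm:Ext4} and Theorem \ref{prop:ExtDiag}, is to keep the same $n\times n$ grid and fill four \emph{additional diagonals}, using an auxiliary \emph{shiftable} array $B$ (equal numbers of positive and negative entries in each row and column, zero row and column sums, skeleton disjoint from the existing one) whose support, after adding a constant to the positive entries and subtracting it from the negative ones, covers precisely the new portion of the required support. Shiftability is what makes the row and column sums survive the translation; without some such device your ``choose the new entries so the sums vanish'' has no mechanism behind it.

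Second, even with a correct $+4$ extension your reduction does not cover the full statement. For $k\equiv 1\pmod 4$, $k\geq 9$ and $n\equiv 0\pmod 4$, descending by $4$ lands on $\H_5(n;5)$ with $n\equiv 0\pmod 4$ --- which is exactly the case left open, so your induction has no anchor there, yet the theorem asserts existence for these $(k,n)$ (e.g.\ $k=9$, $n=12$). The paper escapes this by a separate $+6$ extension (Theorem \ref{prop:ExtDiag}(3), built from the cyclically $(2,3)$-diagonal array of Proposition \ref{prop:6Finta}) applied to an $\H_{k-6}(n;k-6)$ with $k-6\equiv 3\pmod 4$, reinterpreted as a $(2,\frac{k-5}{2})$-diagonal array via Remark \ref{cyc}. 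You need either this extra extension or some other route around the missing $k=5$, $n\equiv 0\pmod 4$ base case; as written, your plan proves strictly less than the theorem claims.
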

Note that  for $k=5$ we solved the existence problem of integer relative Heffter arrays $\H_5(n;5)$ only for
$n\equiv 3\pmod 4$, leaving the case $n\equiv 0 \pmod 4$ open.
In Section \ref{sec:conclusion} we prove Theorem \ref{thm:esistenza} and the result about orthogonal decompositions obtained thanks to the arrays constructed in previous sections.
Hence, in this paper we focus on the construction of relative Heffter arrays $\H_k(n;k)$. Further constructions for integer $\H_t(n;k)$ will be given in \cite{MP}.
We have to point out that relative Heffter arrays, as well as the classical ones, are useful to obtain biembeddings of orthogonal cyclic cycle decompositions.
This relationship is investigated in \cite{CMPPBiembeddings}.

\section{Relation with relative difference families and decompositions of the complete multipartite graph}\label{sec:relativeDF}

Firstly, we recall some basic definitions about graphs and graph decompositions.
Given a graph $\G$, by $V(\G)$ and $E(\G)$ we mean the vertex set and the edge set of $\G$, respectively,
and by $\lambda \G$  the multigraph obtained from $\G$ by repeating each edge $\lambda$ times.
We will denote by $K_v$ the complete graph of order $v$ and by
 $K_{q\times r}$ the complete multipartite graph with $q$ parts, each of size $r$.
Obviously  $K_{q\times 1}$ is nothing but the complete graph $K_q$.
The cycle of length $k$, also called $k$-cycle, will be denoted by $C_k$.

The following are well known definitions and results which can be found in \cite{B}.
Let $\G$ be a subgraph of a graph $K$.
A $\Gamma$-\emph{decomposition} of $K$ is a set $\D$
of subgraphs of $K$ isomorphic to $\G$ whose edges partition $E(K)$.
If the vertices of $\G$ belong to an additive group $G$, given $g\in G$,  the graph whose vertex set is
$V(\G)+g$ and whose edge set is $\{\{x+g,y+g\}\mid \{x,y\}\in E(\G)\}$ will be denoted by $\G+g$.
An \emph{automorphism group} of a $\G$-decomposition $\D$ of $K$ is a group of bijections on $V(K)$
leaving $\D$ invariant.
 A $\G$-decomposition $\D$ of $K$ is said to be \emph{regular under a group} $G$ or $G$-\emph{regular}
 if it admits $G$ as an automorphism group acting sharply transitively on $V(K)$.
 Here we consider cyclic cycle decompositions, namely $\G$-decompositions which are regular
 under a cyclic group where $\G$ is a cycle.

\begin{prop}
Given an additive group $G$,  a $\G$-decomposition $\D$ of a graph $K$ is $G$-regular if and only if, up to isomorphisms, the following conditions hold:
  \begin{itemize}
    \item $V(K)=G$;
    \item $B\in \D$ $\Rightarrow$ $B+g\in \D$ for all $g\in G$.
  \end{itemize}
\end{prop}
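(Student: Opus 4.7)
The plan is to unpack the definition of a $G$-regular decomposition and split the biconditional into its two implications, with the clause ``up to isomorphisms'' absorbed into a canonical relabelling of $V(K)$ by the elements of $G$.

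For the forward direction, I would begin with a $\G$-decomposition $\D$ of $K$ admitting $G$ as an automorphism group acting sharply transitively on $V(K)$. Fixing any base vertex $v_0\in V(K)$, the map $g\mapsto g(v_0)$ is a bijection from $G$ to $V(K)$, and transporting the graph structure of $K$ along this bijection produces an isomorphic graph whose vertex set is $G$ itself. Under this identification, the automorphism induced by $h\in G$ becomes the translation $x\mapsto x+h$, and the hypothesis that $G$ leaves $\D$ invariant translates directly into the condition $B\in\D\Rightarrow B+h\in\D$ for every $h\in G$.

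For the converse, I would suppose that $V(K)=G$ and that $\D$ is closed under translation by every element of $G$. The translations $\tau_g\colon x\mapsto x+g$ form a subgroup of the symmetric group on $V(K)$ isomorphic to $G$ and acting sharply transitively on $V(K)$, so the only thing left to check is that each $\tau_g$ is an automorphism of $K$ preserving $\D$. Writing $E(K)=\bigsqcup_{B\in\D}E(B)$ and noting that $B\mapsto B+g$ is a bijection of $\D$ onto itself (with inverse $B\mapsto B-g$) shows that $\tau_g$ carries $E(K)$ onto $E(K)$; since each $B+g$ is isomorphic to $B$, hence to $\G$, the family $\D$ is permuted by $\tau_g$ and remains a $\G$-decomposition, so the translation subgroup witnesses its $G$-regularity. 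I do not expect any serious obstacle here: the only slightly delicate point is making the relabelling $V(K)\leftrightarrow G$ explicit enough to justify the ``up to isomorphisms'' qualifier in the statement, which is a matter of bookkeeping rather than mathematical difficulty.
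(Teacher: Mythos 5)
Your argument is correct. The paper itself offers no proof of this proposition, presenting it as a well-known fact quoted from the reference of Buratti on cycle decompositions with sharply vertex-transitive automorphism groups; your write-up is exactly the standard unpacking of the definition (identify $V(K)$ with $G$ via the sharply transitive action in one direction, and observe that the translations form a sharply transitive automorphism group preserving $\D$ in the other) that the authors are implicitly relying on.
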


One of the most efficient tools applied for finding regular decompositions is the \emph{difference method}.
Here, in particular, we are interested in relative difference families over graphs, introduced in \cite{BP} (see also \cite{BP2}).

\begin{defi}
Let  $\G$ be a graph with vertices in an additive group $G$.
The multiset
$$\Delta \G = \{\pm(x-y)\mid  \{x,y\}\in E(\G)\}$$
is called the \emph{list of differences} from $\G$.
\end{defi}

More generally, given a set $\mathcal{W}$ of graphs with vertices
in $G$, by $\Delta \mathcal{W}$ one means the union (counting
multiplicities) of all multisets $\Delta \G$,
where  $\G\in \mathcal{W}$.

\begin{defi}\label{def:DF}
  Let $J$ be a subgroup of an additive group $G$ and let $\G$ be a graph.
  A collection $\F$ of graphs isomorphic to $\G$ and with vertices in $G$
  is said to be a $(G,J,\G,\lambda)$-\emph{difference family} (briefly, DF)
if each element of $G\setminus J$ appears exactly $\lambda$ times in $\Delta \F$,
while no element of $J$ appears there.
\end{defi}
One speaks also of a \emph{difference family over $G$ relative to $J$}.
If $J=\{0\}$ one simply says that $\F$ is a $(G,\G,\lambda)$-DF.
If $\G$ is a complete graph we find again the concept introduced by Buratti in \cite{BDF}.
If $J=\{0\}$ and $\G$ is a complete graph, then we obtain the classical concept of difference family, see \cite{AB}.
If $t$ is a divisor of $v$, by writing $(v,t,\G,\lambda)$-DF one means a $(\Z_v,\frac{v}{t}\Z_v,\G,\lambda)$-DF,
where $\frac{v}{t}\Z_v$ denotes the subgroup of $\Z_v$ of order $t$.
We point out that the most interesting (and the most difficult) case is with $\lambda=1$.
The relationship between relative difference families and regular decompositions of the complete multipartite graph is explained in the following result.

\begin{thm}\label{thm:basecycles}\cite[Proposition 2.6]{BP}
If $\F=\{B_1,\ldots,B_\ell\}$ is a $(G,J,\G,\lambda)$-DF, then the collection of graphs $\B=\{B_i+g \mid i=1,\ldots,\ell; g\in G\}$
is a $G$-regular $\G$-decomposition of $\lambda K_{q\times r}$, where $q=|G:J|$ and $r=|J|$.
Thus, in particular, a $(G,\G,\lambda)$-DF gives rise to a $G$-regular $\G$-decomposition of $\lambda K_{|G|}$.
\end{thm}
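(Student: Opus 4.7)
The plan is to check directly the three defining properties of a $G$-regular $\G$-decomposition of $\lambda K_{q\times r}$: (i) every element of $\B$ is a copy of $\G$ sitting inside $K_{q\times r}$; (ii) $\B$ is invariant under the sharply transitive translation action of $G$ on the vertex set $G$; and (iii) every edge of $\lambda K_{q\times r}$ is covered the correct number of times (as a multiset, if $\lambda>1$). Throughout, I identify $V(K_{q\times r})$ with $G$, declaring the $q$ parts to be the cosets of $J$, each of size $r=|J|$.

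Each $B_i+g$ is isomorphic to $B_i\cong\G$ because translation is a graph automorphism. To verify that $B_i+g$ is actually a subgraph of $K_{q\times r}$ (and not merely of $K_{|G|}$), observe that an edge $\{u+g,v+g\}$ with $\{u,v\}\in E(B_i)$ joins two vertices in the same part if and only if $(u+g)-(v+g)=u-v\in J$; however, by the defining property of a DF, no element of $J$ appears in $\Delta\F$, so no edge of any $B_i$ has its difference in $J$. Invariance of $\B$ under translation is immediate since $(B_i+g)+h=B_i+(g+h)\in\B$, and $G$ acts sharply transitively on itself by translation, giving the required regular action on vertices.

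The crux is the edge count. Fix an unordered pair $\{x,y\}\subseteq G$ with $d:=x-y\neq 0$, and count triples $(i,\{u,v\},g)$ such that $\{u,v\}\in E(B_i)$ and $\{u+g,v+g\}=\{x,y\}$. This equation forces either $(u-v,g)=(d,x-u)$ or $(u-v,g)=(-d,x-v)$, so once an edge $\{u,v\}\in E(B_i)$ with $u-v=\pm d$ is chosen, $g$ is determined. Thus the number of such triples equals the multiplicity of $d$ in the multiset $\Delta\F$, which by the DF property is $\lambda$ when $d\in G\setminus J$ and $0$ when $d\in J\setminus\{0\}$. Since the edges of $\lambda K_{q\times r}$ are exactly the pairs $\{x,y\}$ with $x-y\in G\setminus J$, each taken with multiplicity $\lambda$, the collection $\B$ supplies the claimed decomposition. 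The last assertion of the theorem is the special case $J=\{0\}$, $q=|G|$, $r=1$.

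The only delicate point, and thus the main obstacle, is the $2$-torsion case where $2d=0$: here a single edge $\{u,v\}$ with $u-v=d$ contributes $d$ with multiplicity two to $\Delta\F$ (because the multiset $\{u-v,v-u\}$ collapses to $\{d,d\}$), while correspondingly both $g=x-u$ and $g=x-v$ (which differ since $u\neq v$) produce translates equal to $\{x,y\}$, so the bijection between triples and occurrences of $d$ is preserved. Once this subtlety is accounted for, the rest is bookkeeping.
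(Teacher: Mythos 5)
Your proof is correct. The paper itself offers no argument for this statement --- it is quoted verbatim from \cite[Proposition 2.6]{BP} --- so there is no internal proof to compare against; what you give is the standard difference-method verification, and it is complete: the differences avoiding $J$ guarantee that each translate lives inside $K_{q\times r}$, translation gives the sharply transitive $G$-action, and your count of triples $(i,\{u,v\},g)$ correctly identifies the covering multiplicity of a pair $\{x,y\}$ with the multiplicity of $x-y$ in $\Delta\F$, including the $2$-torsion case where one edge yields two valid translations and, matchingly, two occurrences of $d$ in $\Delta\F$. The only conventional point worth making explicit is that $\B$ must be read as the multiset of translates indexed by pairs $(i,g)$ (so that $\lambda K_{q\times r}$ is covered with the right multiplicities even if some block has a nontrivial translation stabilizer); your triple count already treats it this way.
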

Results about regular cycle decompositions of the complete multipartite graph via relative difference families can be found in \cite{BP,BMT,MPP,PP}.

Now, in order to present the connection between relative Heffter arrays and relative difference families,
we have to introduce the concept of \emph{simple ordering}.

Henceforward, given two integers $a\leq b$, we denote by $[a,b]$ the interval containing the integers $\{a,a+1,\ldots,b\}$.
If $a>b$, then $[a,b]$ is empty.
Given an $m\times n$ p.f.  array $A$,
the rows and the columns of $A$ will be denoted by $\overline{R}_1,\ldots,\overline{R}_m$ and by $\overline{C}_1,\ldots,\overline{C}_n$, respectively.
We will denote by $\E(A)$ the list of the elements of the filled cells of $A$.
Analogously, by $\E(\overline{R}_i)$ and $\E(\overline{C}_j)$ we mean the elements  of the $i$-th row and of the $j$-th column of $A$, respectively.
Given  a finite subset $T$ of an abelian group $G$ and an ordering $\omega=(t_1,t_2,\ldots,t_k)$ of the elements
in $T$,  let $s_i=\sum_{j=1}^i t_j$, for any $i\in[1,k]$,
be the $i$-th partial sum of $T$.
The ordering $\omega$ is said to be \emph{simple} if $s_b\neq s_c$ for all $1\leq b <  c\leq k$. 
If $s_k=0$, this is equivalent to require that no proper subsequence of consecutive elements of $\omega$ sums to $0$. Note that if $\omega$ is a simple ordering then also $\omega^{-1}=(t_k,t_{k-1},\ldots,t_1)$ is simple.
We point out that there are several interesting problems and conjectures about distinct partial sums: see, for instance, \cite{AL, ADMS, CMPPSums, HOS, O}.
Given an $m \times n$ p.f. array $A$, by $\omega_{\overline{R}_i}$ and $\omega_{\overline{C}_j}$ we will
denote  an ordering of $\E(\overline{R}_i)$ and of $\E(\overline{C}_j)$, respectively.  If for any $i\in[1, m]$
and for any $j\in[1,n]$, the orderings $\omega_{\overline{R}_i}$ and $\omega_{\overline{C}_j}$ are simple, we define
by
  $\omega_r=\omega_{\overline{R}_1}\circ \ldots \circ\omega_{\overline{R}_m}$ the simple ordering for the rows and
  by $\omega_c=\omega_{\overline{C}_1}\circ \ldots \circ\omega_{\overline{C}_n}$ the simple ordering for the columns.
  A p.f. array $A$ on a group $G$ is said to be
 \emph{simple} if there exists a simple ordering for each row and each column of $A$. Clearly if $k\leq 5$, then every relative Heffter array is simple.
Note that if we have a simple $\H_t(n;k)$ we can construct $2^n$ simple orderings $\omega_r$ for the rows
and $2^n$ simple orderings $\omega_c$ for the columns, since  the inverse of a simple ordering of a row (or a column) is still a simple ordering.

\begin{prop}\label{from Heffter to DF}
If $A$ is a simple $\H_t(m,n;s,k)$, then
there exist a $(2ms+t,t,C_s,$ $1)$-DF
and  a $(2nk+t,t,C_k,1)$-DF.
\end{prop}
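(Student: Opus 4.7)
The plan is to turn each row of $A$ into an $s$-cycle whose edge-differences are exactly the entries of that row, and then invoke condition $(\rm{b}_1)$ to verify that the $m$ cycles so produced form the first required DF; the second DF comes by running the same argument on the columns.

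Concretely, fix a row $\overline{R}_i$, let $\omega_{\overline{R}_i}=(a_{i,1},\ldots,a_{i,s})$ be its given simple ordering, and set $s_{i,0}=0$ together with $s_{i,j}=a_{i,1}+\cdots+a_{i,j}$ for $1\le j\le s$. Since $\sum_{\ell=1}^s a_{i,\ell}=0$ by $(\rm{c}_1)$, one has $s_{i,s}=0$, and simplicity forces $s_{i,0},s_{i,1},\ldots,s_{i,s-1}$ to be pairwise distinct in $\Z_v$, where $v=2ms+t=2nk+t$ (using $ms=nk$). The closed walk
\[
B_i=(s_{i,0},\,s_{i,1},\,\ldots,\,s_{i,s-1})
\]
is therefore a genuine $s$-cycle with vertices in $\Z_v$, whose list of differences satisfies $\Delta B_i=\pm\{a_{i,1},\ldots,a_{i,s}\}=\pm \E(\overline{R}_i)$. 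Setting $\F_r=\{B_1,\ldots,B_m\}$, we obtain $\Delta \F_r=\pm \E(A)$. By $(\rm{b}_1)$, for each $x\in \Z_v\setminus J$ exactly one of $x,-x$ appears in $\E(A)$, so every element of $\Z_v\setminus J$ appears exactly once in $\Delta \F_r$ while no element of $J$ does; this is precisely the assertion that $\F_r$ is a $(2ms+t,\,t,\,C_s,\,1)$-DF. Repeating the construction column-by-column, using the simple orderings $\omega_{\overline{C}_j}$ to produce $n$ cycles of length $k$, yields a $(2nk+t,\,t,\,C_k,\,1)$-DF by the same differences count.

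The only real obstacle is the non-degeneracy of the cycles $B_i$, which is exactly the reason simplicity of $\omega_{\overline{R}_i}$ appears in the hypothesis: if two partial sums coincided, $B_i$ would collapse to a shorter closed walk, some edges would be repeated, and the multiset $\Delta B_i$ would fail to equal $\pm \E(\overline{R}_i)$. Once this is settled, the remainder is routine bookkeeping on edge-difference multisets using $(\rm{b}_1)$.
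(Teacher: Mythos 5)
Your proposal is correct and follows essentially the same route as the paper: form the partial sums of the simple ordering of each row to get an $s$-cycle $B_i$ with $\Delta B_i=\pm\E(\overline{R}_i)$, observe that $\Delta\F_r=\pm\E(A)=\Z_v\setminus J$ by condition $(\rm{b_1})$, and repeat on the columns. Your added remarks on why simplicity guarantees the partial sums are distinct (so the closed walk is a genuine cycle) make explicit a point the paper leaves implicit, but the argument is the same.
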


\begin{proof}
By hypothesis $A$ is simple, hence there exists a simple ordering $\omega_i$ for
the $i$-th
row of $A$ with $i\in [1,m]$.
So, from each row of $A$ we can construct an $s$-cycle whose vertices in $\Z_{2ms+t}$ are the partial
sums of $\omega_i$.
Let $\F_s$ be the set of $m$ $s$-cycles constructed this way using the rows of $A$.
Clearly, $\Delta \F_s=\pm \E(A)$.
On the other hand, since $A$ is a $\H_t(m,n;s,k)$ we have $\pm \E(A)=\Z_{2ms+t}\setminus \frac{2ms+t}{t}\Z_{2ms+t}$.
Hence $\F_s$ is a $(2ms+t,t,C_s,1)$-DF.

An analogous reasoning can be done on the columns of $A$ obtaining a $(2nk+t,t,C_k,1)$-DF, say $\F_k$ similarly.
\end{proof}

\begin{rem}\label{rem:ortho}
Let $\F_s$ and $\F_k$ be the relative difference families constructed in the previous proposition.
  Note that for any $C_s\in \F_s$ and any $C_k\in \F_k$, we have
$|\Delta C_s \cap \Delta C_k| \in \{0,2\}.$
\end{rem}

\begin{ex}
Starting from the array $A=\H_{16}(4;4)$ given in Example \ref{ex:43} we construct two $(48,16,C_4,1)$-DFs.
Since $k=4<6$  every ordering is simple.
Set, for instance:
\begin{align*}
\omega_1&=(1,-7,-16,22), & \nu_1&=(-11,-13,23,1), \\
\omega_2&=(23,2,-8,-17), & \nu_2&=(-7,2,19,-14), \\
 \omega_3&=(-10,4,19,-13), &  \nu_3&=(20,4,-8,-16), \\
  \omega_4&=(-11,-14,20,5),& \nu_4&=(5,-10,-17,22).
\end{align*}
The $\omega_i$'s and the $\nu_i$'s are simple orderings for the rows and the columns of $A$, respectively.
Starting from these orderings we obtain the following $4$-cycles:
\begin{align*}
C^{\omega_1}&=(1,-6,-22,0), & C^{\nu_1}&=(-11,-24,-1,0),\\
 C^{\omega_2}&=(23,25,17,0), & C^{\nu_2}&=(-7,-5,14,0), \\
 C^{\omega_3}&=(-10,-6,13,0), & C^{\nu_3}&=(20,24,16,0), \\
 C^{\omega_4}&=(-11,-25,-5,0), & C^{\nu_4}&=(5,-5,-22,0).
\end{align*}
Set $\F^\omega_4=\{C^{\omega_i}\mid i\in[1,4]\}$ and $\F^\nu_4=\{C^{\nu_i}\mid i\in[1,4]\}$;
by the construction of the cycles it immediately follows that
$\Delta \F^\omega_4=\Delta \F^\nu_4= \Z_{48}\setminus 3\Z_{48}$.
Hence $\F^\omega_4$ and $\F^\nu_4$ are two $(48,16,C_4,1)$-DFs.
\end{ex}

We recall the following definition, see for instance \cite{CY1}.
\begin{defi}
  Two $\G$-decompositions $\D$ and $\D'$ of a simple graph $K$ are said to be \emph{orthogonal} if  for any $B$ of $\D$
  and any $B'$ of $\D'$, $B$ intersects $B'$ in at most one edge.
\end{defi}

\begin{prop}\label{HeffterToDecompositions}
  Let  $\H_t(m,n;s,k)$ be simple with respect to the orderings $\omega_r$ and $\omega_c$. Then:
  \begin{itemize}
    \item[(1)] there exists a cyclic $s$-cycle decomposition $\D_{\omega_r}$ of $K_{\frac{2ms+t}{t}\times t}$;
    \item[(2)] there exists a cyclic $k$-cycle decomposition $\D_{\omega_c}$ of $K_{\frac{2nk+t}{t}\times t}$;
    \item[(3)] the cycle decompositions $\D_{\omega_r}$ and $\D_{\omega_c}$ are orthogonal.
  \end{itemize}
\end{prop}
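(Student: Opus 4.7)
The plan is to string together the two results already developed in the section: Proposition \ref{from Heffter to DF}, which extracts two relative difference families from a simple Heffter array, and Theorem \ref{thm:basecycles}, which converts a relative DF into a regular cycle decomposition. Remark \ref{rem:ortho} then supplies the input needed for orthogonality.

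More concretely, first I would observe that counting filled cells of the array in two ways gives $ms=nk$, so $2ms+t=2nk+t$ and both statements (1) and (2) concern (possibly different) decompositions of graphs on the same vertex set $\Z_{2nk+t}$, with the same subgroup $J=\frac{2nk+t}{t}\Z_{2nk+t}$ of order $t$ playing the role of the ``hole''. Applying Proposition \ref{from Heffter to DF} using the simple row ordering $\omega_r$ produces a $(2ms+t,t,C_s,1)$-DF $\F_s$ whose base $s$-cycles are the cycles $C^{\omega_{\overline R_i}}$ whose consecutive vertices are the partial sums of $\omega_{\overline R_i}$; using $\omega_c$ yields a $(2nk+t,t,C_k,1)$-DF $\F_k$ analogously. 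Theorem \ref{thm:basecycles} then delivers the $\Z_{2nk+t}$-regular decompositions
\[
\D_{\omega_r}=\{B+g : B\in\F_s,\ g\in\Z_{2nk+t}\},\qquad \D_{\omega_c}=\{B+g : B\in\F_k,\ g\in\Z_{2nk+t}\},
\]
of $K_{\frac{2ms+t}{t}\times t}$ and $K_{\frac{2nk+t}{t}\times t}$, respectively. This settles (1) and (2).

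For (3), the key point is that the list of differences is translation invariant: for any graph $\G$ with vertices in $\Z_{2nk+t}$ and any $g$, $\Delta(\G+g)=\Delta\G$. Hence for every $R\in\D_{\omega_r}$ and every $C\in\D_{\omega_c}$ there exist a row base cycle $C^{\omega_{\overline R_i}}$ and a column base cycle $C^{\omega_{\overline C_j}}$ with $\Delta R=\Delta C^{\omega_{\overline R_i}}$ and $\Delta C=\Delta C^{\omega_{\overline C_j}}$. Suppose towards a contradiction that $R$ and $C$ share two distinct edges $e_1,e_2$. Since the nonzero entries of $A$ are pairwise distinct up to sign, inside any single base cycle each unordered edge contributes exactly one pair $\{+d,-d\}$ to $\Delta$, and distinct edges contribute distinct pairs. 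Consequently $e_1,e_2$ would force $|\Delta R\cap\Delta C|\ge 4$. But by Remark \ref{rem:ortho}, $|\Delta C^{\omega_{\overline R_i}}\cap\Delta C^{\omega_{\overline C_j}}|\in\{0,2\}$, a contradiction; so $|E(R)\cap E(C)|\le 1$ and the two decompositions are orthogonal.

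The only step that requires genuine care is the edge-counting in the orthogonality part; the rest is a direct invocation of results already proved. I expect the reader-facing subtlety to be the identification of shared edges with shared difference pairs, and this is handled cleanly by the translation invariance of $\Delta$ together with Remark \ref{rem:ortho}.
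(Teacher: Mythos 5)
Your proposal is correct and follows essentially the same route as the paper: parts (1) and (2) are obtained by combining Proposition \ref{from Heffter to DF} with Theorem \ref{thm:basecycles}, and part (3) is deduced from Remark \ref{rem:ortho}. The only difference is that you spell out the edge-versus-difference-pair counting (translation invariance of $\Delta$ and the distinctness of entries up to sign) that the paper leaves implicit, and that elaboration is accurate.
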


\begin{proof}
(1) and (2) follow from Theorem \ref{thm:basecycles} and Proposition \ref{from Heffter to DF}.
Then (3) follows from Remark \ref{rem:ortho}.
\end{proof}

\section{Necessary conditions for the existence of square integer $\H_t(n;k)$}\label{sec:necessary}

Here we determine some necessary conditions for the existence of square integer $\H_t(n;k)$.
We recall that, by definition, $t$ divides $2nk$.

\begin{prop}\label{prop:necc}
Suppose that there exists an integer $\H_t(n;k)$.
\begin{itemize}
\item[(1)] If $t$ divides $nk$, then
$$nk\equiv 0 \pmod 4 \quad \textrm{ or } \quad nk\equiv -t \equiv \pm 1\pmod 4.$$
\item[(2)] If $t=2nk$, then $k$ must be even.
\item[(3)] If $t\neq 2nk$ does not divide $nk$, then
$$t+2nk\equiv 0 \pmod 8.$$
\end{itemize}
\end{prop}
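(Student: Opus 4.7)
The plan is to derive all three statements from a single parity constraint. I would first observe that if $A$ is an integer $\H_t(n;k)$, then the sum of all its entries equals the sum of the row sums, hence vanishes in $\Z$. Writing each filled entry as $\varepsilon_a a$ with $\varepsilon_a\in\{\pm 1\}$ and $a$ ranging over the \emph{support} $S$ of $A$, I get $\sum_{a\in S}\varepsilon_a a=0$, which forces $\sigma := \sum_{a\in S} a$ to be even.

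My next step is to describe $S$ and compute $\sigma$ explicitly. Setting $v=2nk+t$, $N=\lfloor v/2\rfloor$ and $M=v/t\in\Z$, the subgroup $J$ is $\{jM:0\le j\le t-1\}$, so the positive integer representatives in $[1,N]$ of the nonzero elements of $J$ are exactly $jM$ for $1\le j\le \lfloor t/2\rfloor$. Condition $(\mathrm{b}_1)$ then forces
\[
S = [1,N]\setminus\{jM : 1\le j\le \lfloor t/2\rfloor\}, \qquad |S|=nk.
\]
A direct summation of arithmetic progressions yields $\sigma=\tfrac{(v-t)(tv+1)}{8t}=\tfrac{nk(tv+1)}{4t}$ when $t$ is odd, and $\sigma=\tfrac{v(v-t)}{8}=\tfrac{v\,nk}{4}$ when $t$ is even.

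For part (1) I would set $u=nk/t\in\Z$ and impose $\sigma\equiv 0\pmod 2$. When $t$ is odd, the congruence $t^2\equiv 1\pmod 8$ lets me rewrite $tv+1\equiv 2(tnk+1)\pmod 8$, and a short manipulation reduces the evenness of $\sigma = u(tv+1)/4$ to $u(u+1)\equiv 0\pmod 4$, i.e.\ $u\equiv 0$ or $3\pmod 4$; translating back via $nk=tu$ with $t$ odd, this is exactly $nk\equiv 0\pmod 4$ or $nk\equiv -t\pmod 4$. When $t$ is even, I compute $v\,nk = t^2 u(2u+1)$ and write $t=2^a s$ with $s$ odd: if $a\ge 2$ then $4\mid t\mid nk$ and $\sigma$ is automatically even, while if $a=1$ the evenness of $\sigma=v\,nk/4$ forces $u$ even, equivalently $4\mid nk$. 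In either parity the necessary condition collapses to $nk\equiv 0$ or $-t\pmod 4$, the latter disjunct being relevant only for $t$ odd (where indeed $-t\equiv\pm 1\pmod 4$).

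Finally I would dispatch parts (2) and (3) quickly. If $t=2nk$, then $v=4nk$, $M=2$, so $S=\{1,3,\ldots,2nk-1\}$ consists of odd numbers; each row is a sum of $k$ odd integers equal to $0$ in $\Z$, forcing $k$ to be even. If $t\ne 2nk$ and $t\nmid nk$, then from $t\mid 2nk$ one infers that $t$ is even with $2nk/t=2\ell+1$ odd, hence $v=2t(\ell+1)$ and $M=2(\ell+1)$; a direct computation yields $\sigma=\tfrac{t^2(\ell+1)(2\ell+1)}{4}$, and since $2\ell+1$ is odd the evenness of $\sigma$ is equivalent to $t(\ell+1)\equiv 0\pmod 4$, i.e.\ $v=2t(\ell+1)\equiv 0\pmod 8$, giving $t+2nk\equiv 0\pmod 8$. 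The main technical step is the $t^2\equiv 1\pmod 8$ reduction in case~(1); once $S$ and $\sigma$ are pinned down, the remaining manipulations are elementary arithmetic.
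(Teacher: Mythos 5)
Your proposal is correct and rests on the same key observation as the paper's proof: the zero row (and column) sums force a global parity constraint on the support, which you phrase as the evenness of $\sum_{a\in supp(A)}a$ while the paper phrases it as the evenness of the number of odd elements of the support --- these are the same invariant modulo $2$, and your description of the support and the ensuing case split (on whether $v/t$ is odd or even, plus the all-odd-support argument when $t=2nk$) coincide with the paper's. The only noticeable divergence is in case (3), where your computation yields $t+2nk\equiv 0\pmod 8$ in one step, whereas the paper first derives $t+2nk\equiv 0,6\pmod 8$ and then excludes the residue $6$ by a separate divisibility argument; this is a minor streamlining rather than a different method.
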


\begin{proof}
Given an integer $\H_t(n;k)$, in order for each row to sum to zero, each row must contain an even number of odd
numbers. In particular, the entire array contains an even number of odd numbers.
The support of $\H_t(n;k)$ is the set $S=\left[1,nk+\left\lfloor\frac{t}{2}\right\rfloor\right]\setminus T$,
where $T$ consists of the multiples of $\frac{2nk+t}{t}$, i.e.,
$$T=\left\{\frac{2nk+t}{t},2\frac{2nk+t}{t},\ldots, \left\lfloor\frac{t}{2}\right\rfloor\frac{2nk+t}{t}
\right\}.$$
Note that the interval $\left[1,nk+\left\lfloor\frac{t}{2}\right\rfloor\right]$ contains exactly
$\left\lfloor\frac{nk+\left\lfloor\frac{t}{2}\right\rfloor+1}{2}\right\rfloor$
odd numbers. Now, if $\frac{2nk+t}{t}$ is odd (i.e., if $t$ divides $nk$), then $T$ contains
$\left\lfloor\frac{\left\lfloor\frac{t}{2}\right\rfloor+1}{2}\right\rfloor$ odd numbers.
It follows that
$$\left\lfloor\frac{nk+\left\lfloor\frac{t}{2}\right\rfloor+1}{2}\right\rfloor-\left\lfloor\frac{\left\lfloor\frac{t}{2}
\right\rfloor+1}{2}\right\rfloor$$
is necessarily even, giving case (1).

 If $\frac{2nk+t}{t}$ is even (i.e., if $t$ does not divide $nk$), then $T$ contains no odd numbers.
Hence $\left\lfloor\frac{nk+\left\lfloor\frac{t}{2}\right\rfloor+1}{2}\right\rfloor$ must be even.
In particular, if
$t=2nk$, then $T$ contains all the even numbers of $[1,2nk]$, and so $S$ consists only of odd numbers.
It follows that $k$ must be even, giving case (2).
We are left to consider case (3). If $t\neq 2nk$ does not divide $nk$, then $t$ must be even and
$$\left\lfloor\frac{2nk+t+2}{4}\right\rfloor$$
is necessarily even. Hence $2nk+t \equiv 0,6\pmod8$. Now we will show that
$2nk+t \equiv 6\pmod8$ leads to a contradiction.
Since we are in the hypothesis that  $t$ is even, we can set $t=2m$ with $nk=mh$.
From $2nk+t \equiv 6\pmod8$ we obtain
$2mh+2m \equiv 6\pmod 8$ which implies $ m(h+1)\equiv3 \pmod4$.
In particular this implies that  $h$ is even which contradicts the hypothesis that $t=2m$
does not divides $nk=mh$.
Hence (3) follows.
\end{proof}

We have to point out that the necessary conditions  of the previous proposition are not sufficient. In fact, for $k=3$ we have also found
 two non-existence results. In order to present them we need some definitions.

Given an $m\times n$ p.f. array $A$,  by $A[i,j]$ we mean the element of $A$ in position $(i,j)$.
Also, we define the skeleton of $A$, denoted by $skel(A)$, to be the set of the filled positions of $A$.
In case $A$ and $B$ are $m\times n$ p.f. arrays such that $skel(A)\cap skel(B)=\emptyset$,
we define the union of $A$ and $B$ to be the $m\times n$ p.f. array filled with both the entries of $A$ and $B$.

Let $A$ be an $m \times n$ p.f. array with no empty rows and no empty columns.
Let $\mathcal{R}$ be an $r\times n$ subarray of $A$ and $\mathcal{C}$ be an $m\times c$ subarray of $A$.
We say that the $r \times c$ subarray $\mathcal{R}\cap \mathcal{C}$ of $A$ is \emph{closed}  if  $skel(\mathcal{R}\cap\mathcal{C})=skel(\mathcal{R})\cup skel(\mathcal{C})$.
We say that a closed subarray is minimal if it is
minimal with respect to the inclusion.

\begin{ex}
Consider the following p.f. array $A$, where a filled cell is represented by a $\bullet$:
 \begin{center}
\begin{footnotesize}
$\begin{array}{|r|r|r|r|r|r|}\hline
\bullet & \bullet &   &   &   & \\\hline
  &     & \bullet &   &\bullet &  \\ \hline
	\bullet & \bullet & & \bullet & & \\\hline
	& & \bullet & & \bullet & \bullet\\\hline
	\bullet & \bullet & & \bullet & & \\\hline
	&  & \bullet &  &  & \bullet\\\hline
\end{array}$
\end{footnotesize}
\end{center}

\noindent Let $\mathcal{R}$ be the subarray consisting of the rows $1,3,5$ of $A$ and
let $\mathcal{C}$ be the subarray consisting of the columns $1,2,4$.
Then $\mathcal{R}\cap \mathcal{C}$ is a minimal closed subarray of $A$.
\end{ex}

\begin{lem}\label{3n}
There is no integer $\H_{3n}(n;3)$ for $n\geq 3$.
\end{lem}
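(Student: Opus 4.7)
The plan is to exploit residues modulo $3$. For $t=3n$, the ambient group is $\Z_{9n}$ and the forbidden subgroup is $J=3\Z_{9n}$, so the integer support is $S=[1,\lfloor 9n/2\rfloor]\setminus 3\Z$, which contains exactly $3n$ elements, all coprime to $3$. Consequently every entry of a hypothetical integer $\H_{3n}(n;3)$ is $\equiv \pm 1\pmod 3$. Since each row has three entries summing to $0$ in $\Z$ (hence in $\Z_3$), and three values in $\{+1,-1\}$ add to $0\pmod 3$ only when all three agree, every row is either \emph{positive} (all entries $\equiv 1\pmod 3$) or \emph{negative} (all entries $\equiv 2\pmod 3$); the same dichotomy applies column-wise.

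Next I would organize this via the closed-subarray language just introduced. A filled cell can appear only at the intersection of a row and a column of matching type, so the positive rows together with the positive columns form a closed subarray (the \emph{positive block}), and similarly for the negative side. If $a$ denotes the number of positive rows, the positive block has size $a\times a$ and the negative block has size $(n-a)\times(n-a)$. Split the support as $S=P_1\sqcup P_2$ with $P_i=\{s\in S:s\equiv i\pmod 3\}$. Within the positive block each pair $\{s,-s\}$ with $s\in P_1$ contributes the entry $+s$ and each such pair with $s\in P_2$ contributes $-s$; in the negative block the signs are reversed. Writing that each block has integer sum $0$ (or, in the degenerate cases $a\in\{0,n\}$, using the total-sum condition on the whole array) produces the pair of identities $\sum_{X}s=\sum_{Y}s$ and $\sum_{P_1\setminus X}s=\sum_{P_2\setminus Y}s$, where $X\subseteq P_1$, $Y\subseteq P_2$ record which pairs land in the positive block; adding them yields
\[
\sum_{s\in P_1} s \;=\; \sum_{s\in P_2} s.
\]

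To conclude, I would compute both sides by pairing $S$ into consecutive doublets $(1,2),(4,5),(7,8),\dots$; each such doublet contributes $+1$ to $\sum_{P_2}s-\sum_{P_1}s$. When $n$ is even, $\lfloor 9n/2\rfloor=9n/2$ is a multiple of $3$, so $S$ partitions into $3n/2$ doublets and $\sum_{P_2}s-\sum_{P_1}s=3n/2$. When $n$ is odd, the maximum element $(9n-1)/2$ is $\equiv 1\pmod 3$ and is left unpaired, so a brief computation gives $\sum_{P_1}s-\sum_{P_2}s=3n$. In either case the two sums differ whenever $n\geq 1$, contradicting the identity above and proving the lemma. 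The conceptual heart of the argument is the residue-mod-$3$ block decomposition in the second paragraph; the final arithmetic is routine bookkeeping.
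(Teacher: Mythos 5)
Your proof is correct and follows essentially the same route as the paper's: the observation that every entry is coprime to $3$, so that each row and each column must be constant modulo $3$, yielding a block decomposition and a contradiction from comparing the sums over the two residue classes. The only differences are cosmetic: the paper first invokes Proposition \ref{prop:necc} to reduce to $n\equiv 0\pmod 4$ and then normalizes by flipping signs on a closed subarray so that all entries are $\equiv 1\pmod 3$ (making the nonzero total sum immediate), whereas you treat both parities of $n$ directly and reach $\sum_{P_1}s=\sum_{P_2}s$ by adding the two block identities.
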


\begin{proof}
By contradiction, suppose that $A$ is an integer $\H_{3n}(n;3)$, hence
by Proposition \ref{prop:necc} we have $n \equiv 0\pmod 4$.
Then, $supp(A)=\left[1,\frac{9n}{2}\right]\setminus \left\{3,6,\ldots,\frac{9n}{2}\right\}$.
Fix any row of $A$ and consider its three elements. Since they sum to zero in $\Z$ and each of them cannot be congruent to zero modulo $3$,
these elements must belong to the same residue class modulo $3$.
The same clearly holds also for any column of $A$. So, considering alternatively rows and columns, one obtains that, for any minimal closed subarray $B$ of $A$,
all the elements of $\E(B)$ belong to the same residue class modulo $3$.
It is easy to see that if we change all the signs of the elements of a closed subarray of $A$, we still obtain an integer $\H_{3n}(n;3)$.
Hence, there would exist an integer $\H_{3n}(n;3)$, say $A'$, such that all its elements
 belong to the same residue class modulo $3$ and, without loss of generality, we can suppose that this residue class modulo $3$ is $1$, namely that
$$\E(A')=\left\{1,-2, 4,-5,7,-8,\ldots,\frac{9n-4}{2}, -\frac{9n-2}{2}\right\}.$$
Now, it is evident that the elements of $\E(A')$ cannot sum to zero in $\Z$, giving a contradiction.
\end{proof}

\begin{lem}
There is no integer $\H_8(4;3)$.
\end{lem}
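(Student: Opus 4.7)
The plan is to derive a contradiction from the support of any hypothetical $A=\H_8(4;3)$. With $n=4$, $k=3$, $t=8$ we have $v=32$ and the support is $S=[1,16]\setminus\{4,8,12,16\}$, consisting of the eight odd magnitudes $\{1,3,5,7,9,11,13,15\}$ and the four even magnitudes $\{2,6,10,14\}$ (all $\equiv 2\pmod 4$).

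First I would show that every row and every column contains exactly two odd entries and one even entry. No element of $S$ is divisible by $4$, so each odd entry is $\pm 1\pmod 4$ and each even entry is $\equiv 2\pmod 4$; since three values each $\pm 1\pmod 4$ cannot sum to $0\pmod 4$, the number of odd entries in every row and every column must be even. Counting the eight odd entries against four rows (resp.\ columns) of three cells each then forces exactly two odd and one even entry in each row and in each column.

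Next, for every even magnitude $e\in\{2,6,10,14\}$ I would list the unordered pairs $\{a,b\}$ of odd magnitudes for which signs exist with $\pm a\pm b=\pm e$, and then search for a partition of the eight odd magnitudes into four such pairs, one compatible with each even magnitude. Branching on the pair assigned to $\pm 14$ (which has the fewest admissible options) reduces this to a short case check, and the unique surviving partition is
\[
P_2=\{7,9\},\quad P_6=\{5,11\},\quad P_{10}=\{3,13\},\quad P_{14}=\{1,15\}.
\]
Thus the two odd magnitudes in any row (resp.\ any column) are completely determined by the even entry of that row (resp.\ column).

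The contradiction follows by matching rows to columns. Label row $i$ by its unique even magnitude $e_i$ and column $j$ by its unique even magnitude $f_j$; the four even entries occupy one cell per row and one per column, so they form a permutation pattern in the $4\times 4$ grid. For any other filled cell $(i,j)$, the entry is odd, so its magnitude lies in $P_{e_i}\cap P_{f_j}$; since $P_2,P_6,P_{10},P_{14}$ are pairwise disjoint, this forces $e_i=f_j$. But the only cell of row $i$ whose column satisfies $f_j=e_i$ is precisely the cell already occupied by the even entry $\pm e_i$, contradicting the assumption that $(i,j)$ holds an odd entry. Hence no integer $\H_8(4;3)$ can exist. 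The main obstacle is the uniqueness of the pair partition; once that is secured, the row/column clash is formal.
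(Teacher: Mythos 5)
Your proof is correct, but it runs along a genuinely different line from the paper's. The paper argues modulo $3$: it splits into the case where every row of the putative $\H_8(4;3)$ contains a multiple of $3$ (a direct compatibility check then pins the row supports down to $\{3,10,13\}$, $\{6,5,11\}$, $\{9,2,7\}$, $\{15,1,14\}$, and a short sign-and-placement analysis eliminates the surviving configurations) and the case where some row avoids $\{3,6,9,15\}$, which is transferred to the columns by a pigeonhole argument and handled symmetrically. You instead argue via parity: each line must contain an even number of odd entries, so counting the eight odd magnitudes forces exactly one even and two odd entries per row and per column; the zero-sum condition forces the two odd magnitudes of a line to be an admissible pair (sum or difference equal to the even magnitude) for its even entry; and the unique partition $\{7,9\},\{5,11\},\{3,13\},\{1,15\}$ of the odd magnitudes, being into pairwise disjoint pairs, makes every odd cell demand that its row's even label equal its column's even label --- which only the cell occupied by that very even entry can satisfy. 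Your route avoids the paper's two-case dichotomy and all explicit placement of signed entries; the final contradiction is purely structural. Both arguments rest on a small finite enumeration at their core (the paper's check of compatible row supports, your uniqueness of the pair partition); I verified that your partition is indeed the unique one (branching on the four candidates for the pair assigned to $14$), so the step you leave as a ``short case check'' does go through.
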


\begin{proof}
Assume, by way of a contradiction, that $A$ is an integer $\H_8(4;3)$, hence $supp(A)=[1,16]\setminus\{4,8,12,16\}$.
We divide the proof into two cases.

Case 1. Suppose that each row of $A$ contains an element equivalent to $0$ modulo $3$.
Clearly we can assume without loss of generality that $3 \in supp(\overline{R}_1)$, $6 \in supp(\overline{R}_2)$, $9 \in supp(\overline{R}_3)$, $15 \in supp(\overline{R}_4)$.
It follows that
\begin{align*}
 supp(\overline{R}_1) & \in \{\{3,1,2\}, \{3,2,5\}, \{3,7,10\}, \{3,10,13\},
\{3,11,14\}\};\\
supp(\overline{R}_2) & \in \{\{6,1,5\},\{6,1,7\},\{6,5,11\},\{6,7,13\}\};\\
supp(\overline{R}_3) & \in \{\{9,2,7\},\{9,1,10\},\{9,2,11\},\{9,5,14\}\};\\
supp(\overline{R}_4) & \in \{\{15,1,14\},\{15,2,13\},\{15,5,10\}\}.
\end{align*}
A simple direct check shows us that these conditions are compatible with $supp(A)=[1,16]\setminus\{4,8,12,16\}$ only if:
\begin{equation}\label{righe}
\begin{array}{lcl}
supp(\overline{R}_1)=\{3,10,13\}; &\qquad & supp(\overline{R}_2)=\{6,5,11\}; \\[3pt]
supp(\overline{R}_3)=\{9,2,7\}; &  & supp(\overline{R}_4)=\{15,1,14\}.
\end{array}
\end{equation}

We can also assume, up to permutations of the columns and changing signs, that $\E(\overline{R}_4)=(15,-14,-1)$ and that the cell $(4,4)$ is empty.
Since $-14\in \overline{C}_2$, we have that $\E(\overline{C}_2)\in \{\{3,11,-14\},\{5, 9, -14\}\}$.
We consider these two cases separately:
$$\mathrm{(a)}\quad A=\begin{array}{|r|r|r|r|}
\hline  & 3 &   &   \;\;\;\;  \\
\hline   & 11 &  &    \\
\hline   &  &   &  \\
\hline  15 & -14 & -1 & \\
\hline
\end{array}; \qquad \mathrm{(b)}\quad
A=\begin{array}{|r|r|r|r|}
\hline  & &   &   \;\;\;\;  \\
\hline   & 5 &  &    \\
\hline   & 9 &  &  \\
\hline  15 & -14 & -1 & \\
\hline
\end{array}.$$
\begin{itemize}
\item[(a)] Since the cell $(4,4)$ is empty it follows that the cell $(1,4)$ is not empty and that
$A[1,4]\in\{10,-13\}$.
It is easy to see that if $A[1,4]=10$ then it is not possible to complete the column $\overline{C}_4$.
Hence $A[1,4]=-13$ which implies  $\E(\overline{C}_4)=\{-13,6,7\}$, but now there is no possible way to complete the row $\overline{R}_2$.
\item[(b)] Using \eqref{righe}, it is not hard to see that $\E(\overline{C}_1)=\{-13,-2,15\}$ and
$\E(\overline{C}_3)=\{3,-2,-1\}$, which clearly is a contradiction.
\end{itemize}

Case 2. Suppose now that there is a row $\overline{R}_i$ of $A$ such that each of its elements is equivalent to $\pm1$ modulo $3$.
Clearly, we can assume without loss of generality that $\overline{R}_i=\overline{R}_1$, hence
 $\{3,6,9,15\}\subset supp(\overline{R}_2)\cup supp(\overline{R}_3)\cup supp(\overline{R}_4)$.
 Because of the pigeonhole principle there exists a row $\overline{R}_j$, with $j\neq 1$, whose support contains at least two elements among $\{3,6,9,15\}$.
 We can assume that $j=2$ and that the filled positions of $\overline{R}_2$ are $(2,1),(2,2)$ and $(2,3)$.
 Since the sum of the elements of $\overline{R}_2$ is zero, we have that $|supp(\overline{R}_2)\cap \{3,6,9,15\}|=3$;
 let us denote by $x$ the element of $\{3,6,9,15\}$ that is not contained in $supp(\overline{R}_2)$.
 It follows that $x \in supp(\overline{C}_4)$, otherwise we would have a column with exactly two elements equivalent to $0$ modulo $3$,
  but this implies that the sum of this column is not zero. Therefore each column of $A$ contains an element equivalent to $0$ modulo $3$.
Now  reasoning as in the first case (on the columns instead of the rows) we obtain a contradiction.
\end{proof}

In this paper  we investigate the existence of an integer $\H_k(n;k)$.
Note that in this special case the necessary conditions given in Proposition \ref{prop:necc} can be written in a
simpler way. In fact, we are in case (1) with $t=k$ and hence we obtain the following result.

\begin{cor}\label{condnecc}
  If there exists an integer $\H_k(n;k)$, then necessarily
  one of the following holds:
  \begin{itemize}
    \item[(1)] $k$ is odd and $n\equiv 0,3\pmod 4$;
    \item[(2)] $k\equiv 2\pmod 4$ and $n$ is even;
    \item[(3)] $k\equiv 0\pmod 4$.
  \end{itemize}
\end{cor}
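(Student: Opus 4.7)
The plan is simply to specialize Proposition \ref{prop:necc} to $t=k$. Since $k$ trivially divides $nk$, we fall into case (1) of that proposition, and neither (2) nor (3) can apply. So the only necessary condition to unpack is
\[
nk\equiv 0\pmod 4 \qquad\text{or}\qquad nk\equiv -k\equiv \pm 1\pmod 4.
\]

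The second disjunct immediately forces $k$ to be odd, because only odd $-k$ can be $\equiv \pm 1 \pmod 4$. Under that hypothesis, $nk\equiv -k\pmod 4$ is equivalent to $(n+1)k\equiv 0\pmod 4$, and with $k$ odd this rewrites as $n\equiv 3\pmod 4$. So when $k$ is odd the second disjunct contributes exactly $n\equiv 3\pmod 4$, while the first disjunct, again using that $k$ is odd, contributes $n\equiv 0\pmod 4$; together these give condition (1) of the corollary.

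It remains to handle even $k$, where only the first disjunct is available. If $k\equiv 2\pmod 4$, then $nk\equiv 0\pmod 4$ forces $n$ to be even, which is condition (2); if $k\equiv 0\pmod 4$, then $nk\equiv 0\pmod 4$ automatically, giving condition (3) with no restriction on $n$. There is no real obstacle in this argument; the only small bookkeeping step is the rewriting $nk\equiv -k \Leftrightarrow (n+1)k\equiv 0 \pmod 4$ that isolates the congruence class of $n$.
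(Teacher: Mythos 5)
Your proposal is correct and is exactly the paper's intended argument: the paper derives this corollary by observing that with $t=k$ one is in case (1) of Proposition \ref{prop:necc}, leaving the routine modular arithmetic (which you carry out correctly) to the reader.
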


\section{Extension theorem} \label{sec:extension}

Firstly we introduce notations and definitions useful to present the main result
of this section which allows us to obtain an integer $\H_{k+h}(n;k+h)$ starting from an integer
$\H_k(n;k)$, for suitable even $h$. Above all, this result plays a crucial role in the paper.

\begin{defi}
A square p.f. array $A$ with entries in $\mathbb{Z}$ is said to be \emph{shiftable} if
 every row and every column contains an equal number of positive and negative entries.
\end{defi}
  Let $A$ be a shiftable array and $x$ a non-negative integer.
  Let $A\pm x$ be the array obtained adding $x$ to each positive entry of $A$ and $-x$    to each negative entry of $A$.

\begin{rem}\label{rem:shiftable}
  If $A$ is shiftable then the row and column sums of $A\pm x$ are exactly
   the row and column sums of $A$.
\end{rem}

If $A$ is an $n\times n$ p.f. array, for $i\in[1,n]$ we define the $i$-th diagonal
$$D_i=\{(i,1),(i+1,2),\ldots,(i-1,n)\}.$$
Here all the arithmetic on the row and the column indices is performed modulo $n$, where the set of reduced residues is $\{1,2,\ldots,n\}$.
We say that the diagonals $D_i,D_{i+1},\ldots, D_{i+k}$ are $k+1$ \emph{consecutive diagonals}.

\begin{defi}
  Let $k\geq 1$ be an integer. We will say that a square p.f. array $A$ of size $n\geq k$ is \emph{cyclically} $k$-\emph{diagonal}
  if the non empty cells of $A$ are exactly those of $k$ consecutive diagonals.
\end{defi}

\begin{defi}
We call \emph{cyclically} $(s,k)$-\emph{diagonal} every  p.f. array $S$ obtained as follows. Take a cyclically
$k$-diagonal $n\times n$ p.f.  array $A$ and
replace each cell of $A$ with an $s\times s$ array which is totally empty if the corresponding cell of $A$ is empty.
Denote by $B$ the $sn \times sn$ array so obtained and let $\overline{C}_1,\ldots,\overline{C}_{sn}$ be its columns.
Let $S$ be any array whose ordered columns
are $\overline{C}_i,\overline{C}_{i+1}, \ldots,\overline{C}_{sn}, \overline{C}_1,\ldots, \overline{C}_{i-1}$, with $i\in [1,sn]$.
\end{defi}

\begin{ex}\label{ex:165}
The following is a cyclically $(2,3)$-diagonal array of size $8$.
 \begin{center}
\begin{footnotesize}
$\begin{array}{|rr|rr|rr|rr|}
\hline 4 &  & 36  & -28 &  &  & -33 & 21   \\
   & 8 & -27 & 39 &  &  & 20 & -40\\
\hline  -22 & 13 & 3 &  & -35  & 41 &  &   \\
  12 &  -29 &  & 7 & 42 & -32 &  &  \\
\hline   &  & 26 & -37  & 1 &  & -14 & 24 \\
    &  & -38 & 19 &  & 5 & 25 & -11 \\
\hline   15 & -10 &  &  & 23 & -30  & 2 &   \\
  -9 & 18 &  &   & -31 & 16 &  & 6  \\
\hline
\end{array}$
\end{footnotesize}
\end{center}
\end{ex}

\begin{rem}\label{cyc}
Each cyclically $k$-diagonal p.f. array of even size $n$ with $k$ odd can be viewed
as a cyclically $\left(2,\frac{k+1}{2} \right)$-diagonal p.f. array.
\end{rem}

\begin{ex}\label{123}
The following p.f. array is a cyclically $3$-diagonal integer $\H_3(12;3)$ whose filled diagonals are $D_{12},D_1,D_2$. This array can be also viewed as a cyclically $(2,2)$-diagonal.
  \begin{center}
\begin{footnotesize}
$\begin{array}{|r|rr|rr|rr|rr|rr|r|}
\hline -5 & 18 &   &  &  &  &  & & & &  & -13 \\
  -32 & 1 & 31 &  &  &  &  &  & & & & \\
\hline   & -19 & 2 & 17 &   &  &  & & & & & \\
   &   & -33 & 3 & 30 &  &  &  & & & & \\
\hline   &  &  &   -20 & 4 & 16  & & & & & & \\
   &  &  &   &   -34 & 12 & 22 & & & & & \\
\hline    &  &  &  &  & -28 & 7 & 21 &  & & & \\
   &  &  &  &  &   & -29 & -6  & 35 &  & & \\
\hline   &  &  &   &  &  &  & -15 & -8 & 23 &  & \\
    &  &  &  &  &  &  &  & -27 & -9 & 36 & \\
\hline   &  &  &  &  &   &  &   & & -14 & -10 & 24 \\
  37 &  &  &  &  &   &  &   & &  & -26 & -11 \\
\hline
\end{array}$
\end{footnotesize}
\end{center}
\end{ex}

\begin{thm}\label{thm:Ext4}
Suppose there exists an integer $\H_k(n;k)$, say $A$, and an $n\times n$ shiftable p.f. array $B$ such that:
\begin{itemize}
\item[(1)] each row and each column of $B$ contains $h$ filled cells;
\item[(2)] $supp(B)=\left[1,\frac{h}{2}(2n+1)\right]\setminus T$ where:
$$T=\begin{cases} \left\{(2n+1),2(2n+1),\dots,\frac{h}{2}(2n+1)\right\} \mbox{ if }k \mbox{ is even;}\\\left\{(n+1),(n+1)+(2n+1),\dots,(n+1)+\frac{h-2}{2}(2n+1)\right\} \mbox{ if }k\mbox{ is odd;}\end{cases}$$
\item[(3)] the elements in every row and column of $B$ sum to $0$;
\item[(4)] $skel(A)\cap skel(B)=\emptyset$.
\end{itemize}
Then there exists an integer $\H_{k+h}(n;k+h)$.
\end{thm}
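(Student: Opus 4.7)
The plan is to build the desired $\H_{k+h}(n;k+h)$ by overlaying $A$ with a shifted copy of $B$ that is moved out of the way of $A$'s support. Set $s=nk+\lfloor k/2\rfloor$ and define
\[
C \;=\; A \,\cup\, (B\pm s),
\]
which is well-defined as a p.f. array because condition~(4) gives $skel(A)\cap skel(B)=\emptyset$. Since each row (resp.\ column) of $A$ contains $k$ filled cells and each row (resp.\ column) of $B$ contains $h$ filled cells by~(1), each row and column of $C$ contains exactly $k+h$ filled cells. For the sum condition, we use~(3) together with Remark~\ref{rem:shiftable}: shiftability of $B$ ensures that the row/column sums of $B\pm s$ coincide with those of $B$, which are $0$, and the rows/columns of $A$ already sum to $0$; hence the rows/columns of $C$ sum to $0$ in $\Z$.

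The only serious bookkeeping is to verify that $supp(C)$ equals the prescribed support of an integer $\H_{k+h}(n;k+h)$, namely $\bigl[1,\,n(k+h)+\lfloor (k+h)/2\rfloor\bigr]$ with the nonzero multiples of $2n+1$ removed. I will split into the two parity cases, which explains why condition~(2) prescribes $T$ differently. When $k$ is even, $s=(k/2)(2n+1)$, so shifting $supp(B)$ by $s$ produces positive entries in $\bigl[s+1,\,((k+h)/2)(2n+1)\bigr]$ with the subset $s+T=\{(k/2+1)(2n+1),\dots,((k+h)/2)(2n+1)\}$ removed; union with $supp(A)=[1,s]\setminus\{(2n+1),\dots,(k/2)(2n+1)\}$ yields exactly the prescribed support. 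When $k$ is odd, $s=nk+(k-1)/2$, and a direct check gives $s+(n+1)=((k+1)/2)(2n+1)$; thus $s+T=\{((k+1)/2)(2n+1),((k+3)/2)(2n+1),\dots,((k+h-1)/2)(2n+1)\}$, which together with the holes $\{(2n+1),\dots,((k-1)/2)(2n+1)\}$ of $supp(A)$ fills in precisely all multiples of $2n+1$ up to $\lfloor ((k+h)(2n+1))/2\rfloor=n(k+h)+(k+h-1)/2$, with nothing else missing from the range $[1,\,n(k+h)+\lfloor (k+h)/2\rfloor]$.

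The main (and essentially only) obstacle is this last support accounting: one must check that in the odd case the ``offset by $n+1$'' built into $T$ is exactly what is needed so that $s+T$ picks up the multiples of $2n+1$ that lie in the interval $(s,\,s+(h/2)(2n+1)]$ and that were not already excluded from $supp(A)$. Once that identity is verified, $C$ satisfies all three defining conditions of an integer $\H_{k+h}(n;k+h)$ and the proof is complete.
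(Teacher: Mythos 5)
Your proposal is correct and follows essentially the same route as the paper's proof: shift $B$ by $s=nk+\lfloor k/2\rfloor$ (which is exactly the paper's $\frac{k}{2}(2n+1)$ for $k$ even and $\frac{k(2n+1)-1}{2}$ for $k$ odd), overlay it on $A$ using the disjoint skeletons, invoke shiftability for the zero sums, and check the support case-by-case on the parity of $k$. The support bookkeeping, including the identity $s+(n+1)=\frac{k+1}{2}(2n+1)$ in the odd case, matches the paper's computation.
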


\begin{proof}
Note that since $B$ is shiftable then $h$ is even.
We divide the proof into two cases according to the parity of $k$.

Case 1: $k$ is even. Since $A$ is an integer $\H_k(n;k)$, we have that:
\[
supp(A)=\left[1,\frac{k}{2}(2n+1)\right]\setminus \left\{(2n+1),2(2n+1),\dots,\frac{k}{2}(2n+1)\right\}.
\]
Since $B$ is shiftable, by  Remark \ref{rem:shiftable} and by $(3)$, the rows and columns of
 $\overline{B}=B\pm\frac{k}{2}(2n+1)$ still sum to zero.
Moreover, because of hypothesis $(2)$, we also have that:
  \[
 supp(\overline{B})=\left[\frac{k}{2}(2n+1)+1,\frac{k+h}{2}(2n+1)\right]\setminus \left\{\frac{k+2}{2}(2n+1),\dots,\frac{k+h}{2}(2n+1)\right\}.
\]
It follows from hypotheses $(1)$ and $(4)$ that the union of $A$ and $\overline{B}$ is an integer $\H_{k+h}(n;k+h)$.

Case 2: $k$ is odd. We proceed in a similar way. Here we have that:
\[
supp(A)=\left[1,\frac{k(2n+1)-1}{2}\right]\setminus \left\{(2n+1),\dots,\frac{k-1}{2}(2n+1)\right\}.
\]
Since $B$ is shiftable, by  Remark \ref{rem:shiftable} and by $(3)$, the rows and columns of
 $\overline{B}=B\pm\frac{k(2n+1)-1}{2}$ still sum to zero.
Moreover, because of hypothesis $(2)$, we have that the support of $\overline{B}$ is:
\[
\left[\frac{k(2n+1)+1}{2},\frac{(k+h)(2n+1)-1}{2}\right]\setminus \left\{\frac{k+1}{2}(2n+1),\dots,\frac{k+h-1}{2}(2n+1)\right\}.
\]
It follows from hypotheses $(1)$ and $(4)$ that the union of $A$ and $\overline{B}$ is an integer $\H_{k+h}(n;k+h)$.
\end{proof}

Many of the constructions we will present  are based on filling in the cells of a set of diagonals.
In order to describe these constructions we use the same procedure introduced in \cite{DW}.
In an $n\times n$ array $A$ the procedure $diag(r,c,s,\Delta_1,\Delta_2,\ell)$ installs the entries
\[
A[r+i\Delta_1,c+i\Delta_1]=s+i\Delta_2\qquad \textrm{for}\ i\in[0,\ell-1].
\]
The parameters used in the $diag$ procedure have the following meaning:
\begin{itemize}
  \item $r$ denotes the starting row,
  \item $c$ denotes the starting column,
  \item $s$ denotes the entry $A[r,c]$,
  \item $\Delta_1$ denotes the increasing value of the row and column at each step,
  \item $\Delta_2$ denotes how much the entry is changed at each step,
  \item $\ell$ is the length of the chain.
\end{itemize}
We will write $[a,b]_{(W)}$ to mean $supp(W)=[a,b]$.

Here we provide some direct constructions of shiftable p.f. arrays that satisfy the hypotheses of Theorem \ref{thm:Ext4}.

\begin{prop}\label{prop:4}
There exists a shiftable, integer, cyclically $4$-diagonal $\H_4(n;4)$ for $n\geq4$.
\end{prop}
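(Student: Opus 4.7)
The plan is to give an explicit direct construction of an $n\times n$ cyclically $4$-diagonal array $A$, filling only the four consecutive diagonals $D_1, D_2, D_3, D_4$. Since an integer $\H_4(n;4)$ has support $[1,4n+2]\setminus\{2n+1, 4n+2\}$, which has exactly $4n$ elements, the $4n$ filled cells must realise this set of absolute values. I will fix the sign pattern $(+,-,-,+)$ on $(D_1,D_2,D_3,D_4)$: because the four filled diagonals are consecutive, this single choice forces every row (read left to right) and every column (read top to bottom) to display the same pattern $(+,-,-,+)$, so shiftability comes for free. A short range check also eliminates the other feasible sign patterns, since the sums $|D_1|+|D_2|$ and $|D_3|+|D_4|$ live in disjoint intervals, and similarly for the pattern $(+,-,+,-)$, which leaves $(+,-,-,+)$ as essentially the only option compatible with the natural split of the support.

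For the absolute values I would start from the most symmetric attempt: use the $diag$ procedure to place $[1,n]$ on $D_1$, $[n+1,2n]$ on $D_2$, $[2n+2,3n+1]$ on $D_3$ and $[3n+2,4n+1]$ on $D_4$, each as an arithmetic progression in order of increasing row index. A short telescoping computation then shows that every row sum is identically zero and that every column sum is zero whenever the column index is at least $4$; the only possible obstruction is a wrap-around discrepancy in columns $1,2,3$, and one checks that each such discrepancy is a specific integer multiple of $n$ determined by which of the three shifts $j-1,j-2,j-3$ cross the cyclic boundary.

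The hard part of the proof is repairing those boundary columns without breaking the row sums, the support, or the sign pattern. My plan is to implement the correction by a small local modification: either a $diag$-compatible swap of a handful of entries on $D_2$ and $D_3$ near the first rows, or a replacement of the single arithmetic progression on a diagonal by two shorter progressions whose join absorbs the cyclic defect. The precise correction may depend on the parity of $n$ (or on $n \pmod{4}$), so the construction will likely split into two or three cases, each handled by an explicit set of $diag$ instructions; I expect this case analysis to be the most laborious part of the write-up. Once the corrected filling is displayed, verifying (a) the support equals $[1,4n+2]\setminus\{2n+1,4n+2\}$, (b) zero row sums, (c) zero column sums and (d) shiftability reduces to routine substitution into the formulas.
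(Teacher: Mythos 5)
Your setup is exactly the paper's: four arithmetic progressions with absolute values $[1,n]$, $[n+1,2n]$, a block of $[2n+2,3n+1]$ and a block of $[3n+2,4n+1]$ placed on four consecutive diagonals with sign pattern $(+,-,-,+)$, so that shiftability and the row sums are immediate and the only failures occur in the wrap-around columns. But the proof stops at precisely the point where the actual work lies. You write that the correction will be ``either a $diag$-compatible swap of a handful of entries \dots or a replacement of the single arithmetic progression on a diagonal by two shorter progressions,'' that it ``may depend on the parity of $n$,'' and that the case analysis ``will likely split into two or three cases.'' None of these corrections is exhibited, and none is verified to simultaneously preserve the row sums, the support, and the balance of signs in every row and column. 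Until an explicit repaired filling is written down and checked, there is no construction, so this is a genuine gap rather than a routine omission: the boundary repair is the entire content of the proposition.

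For comparison, the paper's fix is uniform in $n$ and requires no parity split. It keeps the $\texttt{A}$ and $\texttt{B}$ diagonals at full length $n$, shortens the third and fourth progressions to length $n-2$ (starting them at $-(2n+4)$ and $3n+4$, thereby reserving the four values $2n+2$, $2n+3$, $3n+2$, $3n+3$), and places those four reserved values in four \emph{ad hoc} cells in rows $n-1$, $n$ and columns $1$, $2$, $3$; a direct check of rows $1$ to $n-2$, the two exceptional rows, columns $1,2,3$ and columns $4$ to $n$ then closes the argument. Your second suggested repair (splitting a progression into shorter pieces) is close in spirit to this, but you would still need to find the correct starting values and the correct positions for the displaced entries, and to verify that a single choice works for all $n\geq 4$. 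A minor additional remark: with your fully naive filling the support already equals the target set and one of the three boundary columns already sums to zero, with the other two off by exactly $\mp n$; so the defect is more localized than your description suggests, but identifying it does not by itself produce the repair.
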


\begin{proof}
 We construct an $n\times n$ array $A$ using the following procedures labeled $\texttt{A}$ to $\texttt{D}$:
$$\begin{array}{lcl}
\texttt{A}:\;   diag(1,1,1,1,1,n);  & \hfill &
\texttt{B}:\; diag(1,2,-(n+1),1,-1,n);\\[3pt]
\texttt{C}:\;   diag(1,3,-(2n+4),1,-1,n-2); &  &
\texttt{D}:\;  diag(1,4,3n+4,1,1,n-2).
\end{array}$$
We also fill the following cells in an \textit{ad hoc} manner:
$$\begin{array}{lcl}
 A\left[n-1,1\right]=-(2n+2), & \quad & A\left[n-1,2\right]=3n+2, \\[3pt]
 A\left[n,2\right]=-(2n+3),  & &  A\left[n,3\right]=3n+3.
   \end{array}$$
We now prove that the array constructed above is an integer $\H_4(n;4)$.
To aid in the proof we give a schematic picture (see Figure \ref{fig1}) of where each of the diagonal procedures
fills cells.
We have placed an $\texttt{X}$ in the \textit{ad hoc} cells.
It is easy to see that $A$ is shiftable and cyclically $4$-diagonal. We now check that the elements in every row sum to $0$
(in $\Z$).

\begin{figure}
\begin{footnotesize}
 \begin{center}
\begin{tabular}{|c|c|c|c|c|c|c|}
  \hline
  \texttt{A} & \texttt{B} & \texttt{C} & \texttt{D} & & & \\ \hline
 & \texttt{A} & \texttt{B} & \texttt{C} & \texttt{D} & & \\ \hline
 & & \texttt{A} & \texttt{B} & \texttt{C} & \texttt{D} & \\ \hline
 & & & \texttt{A} & \texttt{B} & \texttt{C} & \texttt{D} \\ \hline
 \texttt{D} & & & & \texttt{A} & \texttt{B} & \texttt{C} \\ \hline
 \texttt{X} & \texttt{X} & & & & \texttt{A} & \texttt{B} \\ \hline
 \texttt{B} & \texttt{X} & \texttt{X} & & & & \texttt{A}\\ \hline
 \end{tabular}
\end{center}
\end{footnotesize}
\caption{Scheme of construction with $n=7$.}\label{fig1}
\end{figure}

\begin{description}
\item[Row $1$ to $n-2$] Notice that for any row  $r=1+i$, where $i\in\left[0,n-3\right]$, from the
\texttt{A},
\texttt{B},  \texttt{C} and \texttt{D} diagonal cells we get the following sum:
$$(1+i)-(n+1+i) -(2n+4+i) + (3n+4+i)=0.$$
\item[Row $n-1$] This row contains two \textit{ad hoc} values, the $(n-1)$-th element of the \texttt{A} diagonal and
the
$(n-1)$-th element of the \texttt{B} diagonal. The sum is $$-(2n+2) + (3n+2) + (n-1)  -(2n-1)=0.$$
 \item[Row $n$] This row contains two \textit{ad hoc} values, the last  of the \texttt{A} diagonal and the last of the
\texttt{B} diagonal. The sum is $$-2n - (2n+3) + (3n+3) +n=0.$$
   \end{description}
So we have shown that all row sums are zero. Next we check that the columns all add to zero.
\begin{description}
\item[Column $1$] There is an \textit{ad hoc} value plus the first of the \texttt{A} diagonal as well as the last
elements  of the \texttt{D} and \texttt{B} diagonals. The sum is $$1+(4n+1)-(2n+2)-2n=0.$$
\item[Column $2$] There are two \textit{ad hoc} values plus the first of the \texttt{B} diagonal as well as the second
 of the \texttt{A} diagonal. The sum is $$-(n+1)+2+(3n+2)-(2n+3)=0.$$
\item[Column $3$] There is an  \textit{ad hoc} value plus the first of the \texttt{C} diagonal,  the second  of the
\texttt{B} diagonal and the third  of the \texttt{A} diagonal. The sum is $$-(2n+4)-(n+2)+3+(3n+3)=0.$$
\item[Column $4$ to $n$]  For every column $c$, write $c=4+i$, where $i\in\left[0,n-4\right]$. From the \texttt{D},
\texttt{C},  \texttt{B} and \texttt{A} diagonal cells we get the following sum:
$$(3n+4+i)-(2n+5+i)-(n+3+i)+(4+i)=0.$$
\end{description}
So we have shown that each column sums to $0$. Now we consider the support of $A$:
$$\begin{array}{rcl}
supp(A) &= & [1,n]_{(\texttt{A})}\cup [n+1,2n]_{(\texttt{B})}\cup \{ 2n+2, 2n+3\} \cup
[2n+4,3n+1]_{(\texttt{C})}\cup \\[3pt]
&& \{3n+2, 3n+3\}\cup [3n+4,4n+1]_{(\texttt{D})}\\[3pt]
& = & [1,4n+1]\setminus \{2n+1\}.
\end{array}$$
Thus, $A$ is a shiftable, integer, cyclically $4$-diagonal $\H_4(n;4)$ for $n\geq 4$.
\end{proof}

\begin{ex}
Following the proof of Proposition \ref{prop:4} we obtain the integer $\H_4(7;4)$ below.
  \begin{center}
\begin{footnotesize}
$\begin{array}{|r|r|r|r|r|r|r|}
\hline 1 & -8 & -18  & 25 &  & &     \\
\hline   & 2 & -9 & -19 & 26 & &   \\
\hline   &  & 3 & -10 & -20  & 27 &  \\
\hline   &   &  & 4 & -11 & -21 & 28 \\
\hline  29 &  &  &   &   5 & -12 & -22\\
\hline  -16  & 23 &  &  &  & 6 & -13\\
\hline  -14  & -17 & 24 &  &  &  & 7\\
\hline
\end{array}$
\end{footnotesize}
\end{center}
\end{ex}

\begin{prop}\label{prop:4B}
For every $n\geq 4$, there exists an $n\times n$ shiftable, cyclically $4$-diagonal, p.f. array $B$ such that:
\begin{itemize}
\item[(1)] $supp(B)=[1,4n+2]\setminus \{n+1,3n+2\}$;
\item[(2)] the elements in every row and column of $B$ sum to $0$.
\end{itemize}
\end{prop}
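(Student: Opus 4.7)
I would mimic the construction strategy of Proposition~\ref{prop:4}: build $B$ by filling four consecutive diagonals with arithmetic progressions via the $diag$ procedure and then fix a small number of wrap-around cells by hand. The target support
\[
[1,4n+2]\setminus\{n+1,3n+2\}=[1,n]\,\cup\,[n+2,2n+1]\,\cup\,[2n+2,3n+1]\,\cup\,[3n+3,4n+2]
\]
splits into four consecutive blocks of size $n$, matching the $4n$ filled cells of a cyclically $4$-diagonal $n\times n$ array. The key algebraic identity behind the sign choices is
\[
r-(n+1+r)-(2n+1+r)+(3n+2+r)=0,
\]
so in each row I assign the positive sign to the magnitudes $r$ and $3n+2+r$ and the negative sign to $n+1+r$ and $2n+1+r$, guaranteeing that the row sum is zero and that the row is automatically $2$-$2$ sign-balanced (hence shiftable).

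Concretely, I would use four diagonal procedures, two of full length $n$ and two of length $n-2$, together with four ad hoc cells in rows $n-1,n$ and columns $1,2,3$, analogously to the scheme displayed in Figure~\ref{fig1} of Proposition~\ref{prop:4}. The unused magnitudes after the four $diag$ calls are exactly four residual values from the blocks $[2n+2,3n+1]$ and $[3n+3,4n+2]$, and these four values fill the four ad hoc cells. The signs of the ad hoc entries are then forced by the requirement that the two affected rows and three affected columns sum to zero, and by the shiftability pattern already dictated by the generic rows/columns. The non-wrap-around row and column sums vanish by the same telescoping identity above, shifted by one index for columns.

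\textbf{Main obstacle.} The delicate point is to choose the pairing of magnitudes with diagonals so that the $4$ residual ad hoc values simultaneously: (i) lie in the correct missing blocks of the support, (ii) can be signed so that each of rows $n-1,n$ and columns $1,2,3$ has two positive and two negative entries, and (iii) solve the small linear system imposed by the five independent zero-sum conditions (one is redundant by the global identity $\sum_{\text{rows}}=\sum_{\text{cols}}$). A naive ordering of magnitudes on the diagonals can force an ad hoc value to coincide with one already placed by a $diag$ call; to avoid this I would permute the two middle diagonals — placing the magnitudes $n+1+r$ on the diagonal at column offset $+2$ and $2n+1+r$ on the diagonal at column offset $+1$ — which preserves the row telescoping identity but redistributes the leftover magnitudes across the wrap-around columns in a non-conflicting way. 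After that adjustment, the verification of support, shiftability and zero row/column sums is a direct case analysis identical in structure to the one in the proof of Proposition~\ref{prop:4}.
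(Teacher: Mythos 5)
Your high-level plan coincides with the paper's: the actual proof of this proposition is word-for-word the construction of Proposition \ref{prop:4} with the second and fourth progressions shifted up by one unit, namely $\texttt{A}: diag(1,1,1,1,1,n)$, $\texttt{B}: diag(1,2,-(n+2),1,-1,n)$, $\texttt{C}: diag(1,3,-(2n+4),1,-1,n-2)$, $\texttt{D}: diag(1,4,3n+5,1,1,n-2)$, plus the ad hoc entries $-(2n+2)$, $3n+3$ in row $n-1$ and $-(2n+3)$, $3n+4$ in row $n$, verified by the same case analysis. Your decomposition of the support into four blocks of size $n$ with sign pattern $(+,-,-,+)$ and the telescoping identity are exactly the right skeleton.

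The problem is the one place where you deviate from simply transporting Proposition \ref{prop:4}. The conflict you invoke to justify swapping the two middle diagonals does not occur: with the natural layout (block $[n+2,2n+1]$ on the full diagonal at offset $+1$, block $[2n+2,3n+1]$ on the short diagonal at offset $+2$), the $diag$ call at offset $+2$ installs only $[2n+4,3n+1]$ and the two leftover magnitudes $2n+2$, $2n+3$ land in the ad hoc cells with no coincidence — this is precisely what the paper does. Moreover, the swap contradicts the rest of your own plan: keeping the scheme of Figure \ref{fig1}, the short diagonals are those at offsets $+2$ and $+3$, so placing the magnitudes $n+1+r$ at offset $+2$ forces two residual values to come from $[n+2,2n+1]$, not from $[2n+2,3n+1]\cup[3n+3,4n+2]$ as you assert; the ad hoc bookkeeping you describe then breaks down. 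Finally, the proposal never actually exhibits the four ad hoc values nor checks the five wrap-around sums (rows $n-1$, $n$ and columns $1$, $2$, $3$), and that finite verification is the entire content of the statement. It does go through, but for the unpermuted layout; as written, your adjustment would have to be undone before the claimed ``direct case analysis'' could be carried out.
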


\begin{proof}
 We construct an $n\times n$ array $B$ using the following procedures labeled $\texttt{A}$ to $\texttt{D}$:
$$\begin{array}{lcl}
\texttt{A}:\; diag(1,1,1,1,1,n); & \hfill &
\texttt{B}:\; diag(1,2,-(n+2),1,-1,n);\\[3pt]
\texttt{C}:\; diag(1,3,-(2n+4),1,-1,n-2);&&
\texttt{D}:\; diag(1,4,3n+5,1,1,n-2).
  \end{array}$$
We also fill the following cells in an \textit{ad hoc} manner:
$$\begin{array}{lcl}
 B\left[n-1,1\right]=-(2n+2) ,& \quad & B\left[n-1,2\right]=3n+3,\\[3pt]
 B\left[n,2\right]=-(2n+3), &  & B\left[n,3\right]=3n+4.
   \end{array}$$
To aid in the proof we give a schematic picture of where each of the diagonal procedures fills cells (see Figure \ref{fig1}).
We have placed an $\texttt{X}$ in the \textit{ad hoc} cells.
Note that $B$ is shiftable and cyclically $4$-diagonal, so we only need to prove that the array constructed above
satisfies the properties $(1)$ and $(2)$ of the statement. We now check that the elements in every row sum to $0$  (in $\Z$).

\begin{description}
\item[Row $1$ to $n-2$] Notice that for any row $r=1+i$, where $i\in\left[0,n-3\right]$, from the
\texttt{A},
\texttt{B},  \texttt{C} and \texttt{D} diagonal cells we get the following sum:
$$(1+i)-(n+2+i) -(2n+4+i) + (3n+5+i)=0.$$
\item[Row $n-1$] This row contains two \textit{ad hoc} values, the $(n-1)$-th element of the \texttt{A} diagonal and
the
$(n-1)$-th element of the \texttt{B} diagonal. The sum is $$-(2n+2) + (3n+3) + (n-1)  -2n=0.$$
 \item[Row $n$] This row contains two \textit{ad hoc} values and the last elements  of the \texttt{A} and
\texttt{B} diagonals. The sum is $$-(2n+1) - (2n+3) + (3n+4) +n=0.$$
   \end{description}
So we have shown that all row sums are zero. Next we check that the columns all add to zero.
\begin{description}
\item[Column $1$] There is an \textit{ad hoc} value plus the first of the \texttt{A} diagonal as well as the last
elements  of the \texttt{D} and \texttt{B} diagonals. The sum is $$1+(4n+2)-(2n+2)-(2n+1)=0.$$
\item[Column $2$] There are two \textit{ad hoc} values plus the first of the \texttt{B} diagonal as well as the second
 of the \texttt{A} diagonal. The sum is $$-(n+2)+2+(3n+3)-(2n+3)=0.$$
\item[Column $3$] There is an  \textit{ad hoc} value plus the first of the \texttt{C} diagonal,  the second  of the
\texttt{B} diagonal and the third  of the \texttt{A} diagonal. The sum is $$-(2n+4)-(n+3)+3+(3n+4)=0.$$
\item[Column $4$ to $n$]  Notice that for every column  $c=4+i$, where $i\in\left[0,n-4\right]$, from the \texttt{D},
\texttt{C},  \texttt{B} and \texttt{A} diagonal cells we get the following sum:
$$(3n+5+i)-(2n+5+i)-(n+4+i)+(4+i)=0.$$
\end{description}
So we have shown that each column sums to $0$. Now we consider the support of $B$:
$$\begin{array}{rcl}
supp(B) & =& [1,n]_{(\texttt{A})}\cup [n+2,2n+1]_{(\texttt{B})}\cup \{2n+2, 2n+3\}\cup
\\[3pt]
&& [2n+4,3n+1]_{(\texttt{C})}\cup \{3n+3, 3n+4\}
\cup[3n+5,4n+2]_{(\texttt{D})}\\[3pt]
& =&[1,4n+2]\setminus \{n+1,3n+2\}.
\end{array}$$
Hence we obtain the result.
\end{proof}

\begin{ex}
Here we have the $7\times 7$ array obtained following the proof of Proposition \ref{prop:4B}.
  \begin{center}
\begin{footnotesize}
$\begin{array}{|r|r|r|r|r|r|r|}
\hline 1 & -9& -18  & 26 &  & &     \\
\hline   & 2 & -10 & -19 & 27 & &   \\
\hline   &  & 3 & -11 & -20  & 28 &  \\
\hline   &   &  & 4 & -12 & -21 & 29 \\
\hline  30 &  &  &   &   5 & -13 & -22\\
\hline  -16  & 24 &  &  &  & 6 & -14\\
\hline  -15  & -17 & 25 &  &  &  & 7\\
\hline
\end{array}$
\end{footnotesize}
\end{center}
\end{ex}

\begin{prop}\label{prop:4B2}
There exists a shiftable, integer, cyclically $(2,2)$-diagonal $\H_4(n;4)$ for any even $n\geq 4$.
\end{prop}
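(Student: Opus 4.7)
The plan is to give a direct construction in the style of Propositions \ref{prop:4} and \ref{prop:4B}, producing the array cell by cell via \texttt{diag} procedures. Writing $n=2m$, the target skeleton will consist of $2m$ many $2\times 2$ blocks placed on the main and sub block-diagonals of the underlying $m\times m$ block array; after an appropriate cyclic shift of the columns, this yields a cyclically $(2,2)$-diagonal array by the definition given in Section \ref{sec:extension}.

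To fill the entries I would introduce eight \texttt{diag} procedures of the form $\texttt{diag}(r_0,c_0,s_0,2,\Delta,m)$, one for each of the four corner positions (top-left, top-right, bottom-left, bottom-right) within blocks on each of the two filled block-diagonals. Each procedure contributes $m$ cells, for a total of $8m=4n$ filled cells, matching the size of an $\H_4(n;4)$. Near the wrap-around of the two block-diagonals I expect to need a handful of \emph{ad hoc} entries, playing the role of the \texttt{X} cells in Propositions \ref{prop:4} and \ref{prop:4B}.

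The proof then breaks into the usual verifications: that the skeleton is cyclically $(2,2)$-diagonal (exhibiting the column shift and checking there are four filled cells in every row and every column arranged in the required block pattern); that each row sums to $0$ in $\mathbb{Z}$, which reduces to a small number of identities since there are essentially two types of rows (top and bottom of a block-row pair) plus the boundary rows affected by wrap-around; that each column sums to $0$ by a symmetric argument; that the union of the images of the eight procedures together with the \emph{ad hoc} entries is exactly $[1,4n+1]\setminus\{2n+1\}$, which is the same set as $[1,4n+2]\setminus\{2n+1,4n+2\}$ and hence the support required for an integer $\H_4(n;4)$; and finally shiftability, meaning that each row and each column contains two positive and two negative entries, achieved by an appropriate sign convention in the procedures.

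The principal obstacle will be coordinating the eight diagonal procedures so that the resulting arithmetic progressions simultaneously partition the target support, produce telescoping row and column sums, and respect the sign balance required for shiftability. Because the step size along each diagonal is $2$ rather than $1$, the combinatorics is slightly more intricate than in Propositions \ref{prop:4} and \ref{prop:4B}; in particular, excluding exactly $2n+1$ from the support while using step-$2$ progressions starting from carefully chosen values is the delicate bookkeeping point. Once the starting values and increments have been fixed, the verification becomes mechanical: check each row and column sum, and confirm that the supports of the eight progressions, together with the \emph{ad hoc} values, fit together as required, very much in the spirit of the proofs of the two preceding propositions.
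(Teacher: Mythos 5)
Your proposal is a plan rather than a proof: for an existence statement established by explicit construction, the construction itself is the entire mathematical content, and that is precisely what is missing. You never specify the starting values, increments, or signs of the eight \texttt{diag} procedures, nor the \emph{ad hoc} entries you anticipate needing, and you yourself flag the coordination of these parameters as ``the principal obstacle.'' Until those parameters are exhibited and the row sums, column sums, support, and sign balance are actually checked, there is no argument to verify --- only a description of what an argument would look like.

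For comparison, the paper's proof is considerably lighter than what you envision. It writes $n=2m$ and uses just two explicit $2\times 2$ block types,
$$E_i=\begin{array}{|c|c|}\hline 1+4i & -2-4i \\\hline -3-4i & 4+4i \\\hline\end{array}\qquad\text{and}\qquad F_i=\begin{array}{|c|c|}\hline -2-4i & 3+4i \\\hline 4+4i & -5-4i \\\hline\end{array}\;,$$
placing $E_0,\dots,E_{m-1}$ on the main block-diagonal and $F_m,\dots,F_{2m-1}$ on the adjacent one (with $F_{2m-1}$ wrapping around). Each row of the array meets one row of an $E$-block (sum $\mp 1$) and one row of an $F$-block (sum $\pm 1$), and similarly for columns with sums $\mp 2$ and $\pm 2$, so all sums vanish with no \emph{ad hoc} cells and no boundary cases; each block already has one positive and one negative entry in every row and column, giving shiftability for free; and the supports tile as $\bigcup_i supp(E_i)=[1,4m]$ and $\bigcup_i supp(F_i)=[4m+2,8m+1]$, which is exactly $[1,4n+1]\setminus\{2n+1\}$. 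Your eight-progression framework is compatible with this (each corner position of the $E_i$'s traces an arithmetic progression with $\Delta_2=\pm 4$), but recasting the problem at the level of whole $2\times 2$ blocks is what makes the bookkeeping you worry about disappear.
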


\begin{proof}
We set $n=2m$. Let us consider the arrays
$E_i=\begin{array}{|c|c|}\hline
 1+4i & -2-4i \\\hline
 -3-4i & 4+4i \\\hline
 \end{array}\;$
and
$F_i=\begin{array}{|c|c|}\hline
 -2-4i & 3+4i \\\hline
 4+4i & -5-4i \\\hline
 \end{array}\;$.
Now, let $B$ be the $2m\times 2m$ array so defined:
\[
\begin{array}{|c|c|c|c|c|c|c|}
  \hline
 E_0 & \;\;\;F_m\;\;\; &  & & & & \\ \hline
 & E_1 & F_{m+1} &   & & &   \\ \hline
 & & E_2 & F_{m+2} &    & &   \\ \hline
 & & & \ddots & \;\;\ddots\;\; & &      \\ \hline
& & & &\ddots & \;\;\ddots\;\; &    \\ \hline
 & & & & &  E_{m-2} & F_{2m-2}     \\ \hline
F_{2m-1} & & & & &  & E_{m-1}     \\ \hline
\end{array}
\]
Clearly $B$ is shiftable and cyclically $(2,2)$-diagonal; its support is given by:
$$\begin{array}{rcl}
supp(B) & = & \bigcup_{i=0}^{m-1} supp(E_i)\cup\bigcup_{i=m}^{2m-1} supp(F_i)\\
& = & [1,4m]\cup[4m+2,8m+1]=[1,4n+1]\setminus \{2n+1\}.
  \end{array}$$
It is also easy to check that each row and each column of $B$ sums to zero and thus $B$ is a $\H_4(n;4)$ that satisfies the required properties.
\end{proof}

\begin{ex}
Following the proof of Proposition \ref{prop:4B2} we obtain the integer $\H_4(8;4)$ below:
  \begin{center}
\begin{footnotesize}
$\begin{array}{|rr|rr|rr|rr|}
\hline 1 & -2& -18  &19 &  && &     \\
  -3 & 4 & 20 & -21& && &   \\
\hline   &  & 5 & -6 & -22  & 23 & & \\
   &   & -7 & 8 & 24& -25 & &\\
\hline   &  &  &   & 9 & -10& -26& 27\\
   &  &  &   & -11 & 12& 28& -29\\
\hline  -30 &31& &  &  &  & 13& -14\\
  32 &-33& &  &  &  & -15& 16\\
\hline
\end{array}$
\end{footnotesize}
\end{center}
\end{ex}

Given a cyclically $(2,3)$-diagonal p.f. array, we call strip $S_i$ the union of two consecutive rows
$\overline{R}_{2i+1}$ and $\overline{R}_{2i+2}$.

\begin{prop}\label{prop:6Finta}
Let $n\equiv 0 \pmod{4}$ with $n\geq8$. Then there exists an $n\times n$ shiftable, cyclically $(2,3)$-diagonal,
p.f. array $B$ such that:
\begin{itemize}
\item[(1)] $supp(B)=[1,6n+3]\setminus \left\{n+1,3n+2,5n+3\right\};$
\item[(2)] the elements in every row and column of $B$ sum to $0$.
\end{itemize}
\end{prop}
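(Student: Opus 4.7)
The plan is to adapt the block-tiling construction of Proposition \ref{prop:4B2} from two to three block-diagonals. Writing $m=n/2$, the array $B$ will be assembled from $3m$ shiftable, fully-filled $2\times 2$ blocks placed in the cyclically $(2,3)$-diagonal pattern: each strip $S_i$ (rows $2i+1$ and $2i+2$) contains three such blocks, positioned in block-columns $i$, $i+1$, $i+2$ modulo $m$, so that every row and every column of $B$ meets exactly three blocks and has exactly $h=6$ filled cells.

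I would introduce three families of $2\times 2$ blocks, say $E_i$, $F_i$, $G_i$ for $0\leq i<m$, each shiftable (two positive and two negative entries, one of each sign in each row and in each column) and using four distinct absolute values. Since the three excluded support values $n+1$, $3n+2$, $5n+3$ are precisely $n+1+j(2n+1)$ for $j=0,1,2$, a natural allocation is to have the $E$-blocks draw their absolute values from $[1,2n+1]\setminus\{n+1\}$, the $F$-blocks from $[2n+2,4n+2]\setminus\{3n+2\}$, and the $G$-blocks from $[4n+3,6n+3]\setminus\{5n+3\}$. Each of these intervals contains $4m$ values, exactly what is needed to fill $m$ blocks of four distinct absolute values each.

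Within each strip the row-sums of the three blocks must cancel row by row, and within each block-column of the cyclic arrangement the column-sums of the three blocks must cancel. Following the idea of Proposition \ref{prop:4B2}, I would assign each family a prescribed row-sum signature and column-sum signature, chosen so that the three row-signatures sum to zero and the three column-signatures sum to zero; this forces cancellation simultaneously in every strip and in every block-column. The hypothesis $n\equiv 0 \pmod 4$, equivalently $m$ even, is precisely what allows the alternating sign choices needed to close up consistently around the cyclic block-arrangement; for $n\equiv 2\pmod 4$ the parity would fail.

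The main obstacle will be the bookkeeping: writing down explicit formulas for $E_i$, $F_i$, $G_i$ so that simultaneously (i) their supports partition $[1,6n+3]\setminus\{n+1,3n+2,5n+3\}$ exactly, (ii) each block is shiftable of the intended form, and (iii) the prescribed row- and column-sum signatures hold for every block, including at the wrap-around positions where the cyclic block-indices return to the start. As in Proposition \ref{prop:4}, one would likely handle a few boundary strips with \textit{ad hoc} adjustments, verifying their contributions to the affected rows and columns by direct computation.
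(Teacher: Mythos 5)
Your plan is, in outline, the paper's own: the proof there builds $B$ strip by strip out of $2\times 6$ templates $U$, $V_5$, $V_9$ (shifted by $\pm x$), and each template is precisely three juxtaposed shiftable $2\times 2$ blocks whose column-sum signatures are $(2,1)$, $(-4,1)$, $(2,-2)$ --- summing to zero componentwise exactly as you prescribe, so that columns cancel over the three strips meeting each block-column --- while the six entries of each row of a template sum to zero, so rows cancel within a strip. The difficulty is that what you have written is a programme, not a proof: the entire substance of the statement is the explicit exhibition of blocks satisfying your conditions (i)--(iii) simultaneously, and you defer exactly that step (``the main obstacle will be the bookkeeping''). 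In particular, you never verify that your support allocation --- $E$-blocks on $[1,2n+1]\setminus\{n+1\}$, $F$-blocks on $[2n+2,4n+2]\setminus\{3n+2\}$, $G$-blocks on $[4n+3,6n+3]\setminus\{5n+3\}$, each a $4m$-element set with a gap in its interior --- is compatible with fixed row- and column-sum signatures. That compatibility is the only delicate point, and the paper resolves it by a different allocation: each strip receives $12$ (nearly) consecutive values, the argument splits into the subcases $n=12m'$, $12m'+4$, $12m'+8$, and the two variant templates $V_5$, $V_9$ (with supports $[1,13]\setminus\{5\}$ and $[1,13]\setminus\{9\}$, but the same column sums as $U$) are inserted at exactly the strips where one of the forbidden values $n+1$, $3n+2$, $5n+3$ falls strictly inside a strip's range.

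Your explanation of where $n\equiv 0\pmod 4$ enters is also off target, which matters because it would send you looking in the wrong place when completing the argument. The cyclic closure of the block arrangement causes no sign-parity obstruction for any even $n$: Proposition \ref{prop:6} uses the very same templates to produce an integer $\H_6(n;6)$ for every even $n\geq 6$, where the excluded values are $2n+1$ and $4n+2$. The congruence hypothesis here is needed only for the support bookkeeping, i.e.\ so that the three gaps $n+1$, $3n+2$, $5n+3$ can be realized relative to the $12$-element strip supports (equivalently, in your scheme, so that pieces such as $[1,n]$ and $[n+2,2n+1]$ have cardinality divisible by $4$ and can be tiled by whole blocks). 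A complete proof must identify and discharge precisely this point, either by your family-wise allocation with explicitly constructed boundary blocks, or by the paper's strip-wise allocation with the variant templates.
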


\begin{proof}
Consider the following three $2\times 6$ arrays:
$$
U=\begin{array}{|c|c|c|c|c|c|c|}\hline
-1 & 5 & 2 & -7  & -9 & 10\\\hline
3 & -4 & -6 & 8 & 11 & -12\\\hline
\end{array}\;,
$$
$$V_5=\begin{array}{|c|c|c|c|c|c|}\hline
-1 & 10 & 7 & -12 & 4 & -8\\\hline
3 & -9 & -11 & 13 & -2 & 6\\\hline
    \end{array}\;,$$
$$V_9=\begin{array}{|c|c|c|c|c|c|}\hline
-1 & 5 & 2 & -7 & 13 & -12\\\hline
3 & -4 & -6 & 8 & -11 & 10\\\hline
    \end{array}\;.$$
Note that $supp(U)=[1,12]$, $supp(V_5)=[1,4]\cup [6,13]$ and
$supp(V_9)=[1,8]\cup [10,13]$. Also,  $U$, $V_5$ and $V_9$ are shiftable matrices whose rows sum to $0$ and
whose columns have the following sums: $(2,1,-4,1,2,-2)$.
As consequence, every cyclically $(2,3)$-diagonal p.f. array constructed strip by strip using $2\times 6$ arrays of the form $W\pm x$, where $W\in \{U,V_5,V_9\}$
and $x$ is a non-negative integer, has rows and columns that sum to $0$. We have to distinguish three
cases.

If $n=12m$, let $B$ be a cyclically $(2,3)$-diagonal $n\times n$ p.f. array whose strips $S_i$ are as follows:
$$S_i =\left\{\begin{array}{ll}
U\pm 12i & \textrm{ if }  i \in [0,m-1],\\
U\pm (1+12i) & \textrm{ if }  i \in [m,3m-1],\\
U\pm (2+12i) & \textrm{ if }  i \in [3m,5m-1],\\
U\pm (3+12i) & \textrm{ if }  i \in [5m,6m-1].\\
 \end{array}\right.$$
We obtain
$$\begin{array}{rcl}
supp(B) & =& \bigcup_{i=0}^{m-1}[1+12i,12+12i] \cup \bigcup_{i=m}^{3m-1}[2+12i,13+12i]\cup \\[3pt]
&& \bigcup_{i=3m}^{5m-1}[3+12i, 14+12i] \cup \bigcup_{i=5m}^{6m-1} [4+12i,15+12i]\\[3pt]
& =& [1,12m]\cup [12m+2, 36m+1]\cup [36m+3, 60m+2]\cup\\[3pt]
&& [60m+4, 72m+3].
\end{array}$$

If $n=12m+4$, let $B$ be a cyclically $(2,3)$-diagonal  $n\times n$ p.f. array whose strips $S_i$ are:
$$S_i =\left\{\begin{array}{ll}
U\pm 12i & \textrm{ if }  i \in [0,m-1],\\
V_5\pm 12i & \textrm{ if }  i =m,\\
U\pm (1+12i) & \textrm{ if }  i \in [m+1,3m],\\
U\pm (2+12i) & \textrm{ if }  i \in [3m+1,5m],\\
V_9\pm (2+12i) & \textrm{ if }  i =5m+1,\\
U\pm (3+12i) & \textrm{ if }  i \in [5m+2,6m+1].\\
 \end{array}\right.$$
It follows that
$$\begin{array}{rcl}
supp(B) & =& \bigcup_{i=0}^{m-1}[1+12i,12+12i]\cup [12m+1,12m+4]\cup \\[3pt]
&& [12m+6,12m+13]\cup \bigcup_{i=m+1}^{3m}[2+12i, 13+12i] \cup\\[3pt]
&& \bigcup_{i=3m+1}^{5m}[3+12i, 14+12i]\cup [60m+15,60m+22]\cup\\[3pt]
&& [60m+24,60m+27]\cup \bigcup_{i=5m+2}^{6m+1} [4+12i,15+12i]\\[3pt]
& =& [1,12m+4]\cup [12m+6,36m+13]\cup [36m+15, 60m+22]\cup\\
&& [60m+24, 72m+27].
\end{array}$$

If $n=12m+8$ with $m\geq 0$, let $B$ be a cyclically $(2,3)$-diagonal $n\times n$ p.f. array whose strips $S_i$ are:
$$S_i =\left\{\begin{array}{ll}
U\pm 12i & \textrm{ if }  i \in [0,m-1],\\
V_9\pm 12i & \textrm{ if }  i =m,\\
U\pm (1+12i) & \textrm{ if }  i \in [m+1,3m+1],\\
U\pm (2+12i) & \textrm{ if }  i \in [3m+2,5m+2],\\
V_5\pm (2+12i) & \textrm{ if }  i =5m+3,\\
U\pm (3+12i) & \textrm{ if }  i \in [5m+4,6m+3].\\
 \end{array}\right.$$
It follows that
$$\begin{array}{rcl}
supp(B) & =& \bigcup_{i=0}^{m-1}[1+12i,12+12i]\cup [12m+1,12m+8]\cup \\[3pt]
&& [12m+10,12m+13] \cup \bigcup_{i=m+1}^{3m+1}[2+12i,13+12i] \cup\\[3pt]
&& \bigcup_{i=3m+2}^{5m+2}[3+12i,14+12i]\cup [60m+39, 60m+42]\cup \\[3pt]
&& [60m+44, 60m+51 ]\cup \bigcup_{i=5m+4}^{6m+3}[4+12i, 15+12i]\\[3pt]
& = & [1,12m+8]\cup [12m+10, 36m+25] \cup [36m+27, 60m+42] \cup\\[3pt]
&& [60m+44, 72m+51  ].
\end{array}$$

In all three cases, we have $supp(B)=[1,6n+3]\setminus \left\{n+1, 3n+2 ,5n+3\right\}$ as required.
\end{proof}

\begin{ex}\label{414}
Following the proof of Proposition \ref{prop:6Finta} we obtain the following
$12\times 12$ p.f. array $B$:
  \begin{center}
\begin{footnotesize}
$\begin{array}{|rr|rr|rr|rr|rr|rr|}\hline
-1 & 5 & 2 &  -7 &  -9 & 10 & & && &&\\
3 & -4 & -6 & 8 & 11 & -12 & & && & &\\\hline
& & -14 & 18 &  15   & -20 & -22 & 23 &  &  &  & \\
 &  & 16 & -17 & -19 & 21 & 24 & -25 &  &  &  & \\\hline
 &  &  &  & -26 & 30 & 27 & -32 & -34 & 35 &  & \\
 &  &  &  & 28 & -29 & -31 & 33 & 36 & -37 &  & \\\hline
 &  &  &  &  &  & -39 & 43 & 40 & -45 & -47 & 48\\
 &  &  &  &  &  & 41 & -42 & -44 & 46 & 49 & -50\\\hline
-59 & 60 &  &  &  &  &  &  & -51 & 55 & 52 & -57\\
61 & -62 &  &  &  &  &  &  & 53 & -54 & -56 & 58\\\hline
65 & -70 & -72 & 73 &  &  &  &  &  &  & -64 & 68\\
-69 & 71 & 74 & -75 &  &  &  &  &  &  & 66 & -67\\ \hline
\end{array}$
\end{footnotesize}
\end{center}
\end{ex}

\begin{thm}\label{prop:ExtDiag}
If there exists:
\begin{itemize}
  \item[(1)] an integer cyclically $k$-diagonal $\H_k(n;k)$ with $n\geq k+4$,
  then there exists an integer cyclically $(k+4)$-diagonal $\H_{k+4}(n;k+4)$;
  \item[(2)] an integer cyclically $(2,d)$-diagonal $\H_{2d}(n;2d)$ with $n\geq 2d+4$, then
  there exists an integer cyclically $(2,d+2)$-diagonal
	$\H_{2d+4}(n;2d+4)$;
  \item[(3)] an integer cyclically $(2,d)$-diagonal $\H_{2d-1}(n;2d-1)$ with $n\equiv 0 \pmod {4}$, $n\geq 2d+6$,
  then there exists an integer cyclically $(2,d+3)$-diagonal $\H_{2d+5}(n;2d+5)$.
\end{itemize}
\end{thm}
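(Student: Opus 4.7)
The plan is to deduce each of the three extensions from a single application of Theorem \ref{thm:Ext4}, choosing the filler array $B$ from the constructions already produced in Propositions \ref{prop:4}, \ref{prop:4B}, \ref{prop:4B2}, and \ref{prop:6Finta}. In each case the supports of those arrays have been engineered in advance to match exactly the set required by hypothesis (2) of Theorem \ref{thm:Ext4}; what remains is to cyclically relabel $B$ so that its filled (block-)diagonals lie outside those occupied by $A$ (which the assumed bounds on $n$ make possible), and to observe that the union $A \cup \overline{B}$ inherits the desired cyclically diagonal structure.

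For part (1), apply Theorem \ref{thm:Ext4} with $h=4$. When $k$ is even, the forbidden set is $T=\{2n+1,\,4n+2\}$, so one needs $supp(B)=[1,4n+1]\setminus\{2n+1\}$, which is precisely the support of the shiftable cyclically $4$-diagonal $\H_4(n;4)$ from Proposition \ref{prop:4}. When $k$ is odd, $T=\{n+1,\,3n+2\}$ and the matching array is supplied by Proposition \ref{prop:4B}. Since $n\geq k+4$, one may rotate $B$ so that its four diagonals become the four consecutive diagonals just after the $k$ occupied by $A$; the resulting union is cyclically $(k+4)$-diagonal, and Theorem \ref{thm:Ext4} then yields an integer $\H_{k+4}(n;k+4)$ with the required diagonal structure.

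Parts (2) and (3) are identical in spirit, only working with block-diagonals. In part (2), $k=2d$ is even and $h=4$, so the needed support is again $[1,4n+1]\setminus\{2n+1\}$; this time one uses the cyclically $(2,2)$-diagonal construction of Proposition \ref{prop:4B2}, which is available because being $(2,d)$-diagonal forces $n$ to be even, and the bound $n\geq 2d+4$ supplies two block-diagonals on which to place $B$ disjointly from $A$. In part (3), $k=2d-1$ is odd and $h=6$, so the required support is $[1,6n+3]\setminus\{n+1,\,3n+2,\,5n+3\}$, realised by the cyclically $(2,3)$-diagonal array of Proposition \ref{prop:6Finta}; the hypotheses $n\equiv 0\pmod 4$ and $n\geq 2d+6$ together fulfil the input conditions of that proposition (note $2d+6\geq 8$) and leave three block-diagonals free for $B$. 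The main, and really only, point of care is verifying the support identities and the disjointness of skeletons; once these are in hand, Theorem \ref{thm:Ext4} closes the argument in each case, and the union $A\cup \overline{B}$ is manifestly cyclically $(k+4)$-, $(2,d+2)$-, or $(2,d+3)$-diagonal respectively.
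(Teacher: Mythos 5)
Your proposal is correct and follows essentially the same route as the paper: a single application of Theorem \ref{thm:Ext4} in each case, taking $B$ from Proposition \ref{prop:4} or \ref{prop:4B} (according to the parity of $k$) for part (1), from Proposition \ref{prop:4B2} for part (2), and from Proposition \ref{prop:6Finta} for part (3), after cyclically shifting $B$ so that its (block-)diagonals avoid those of $A$. Your added remarks --- the explicit identification of the sets $T$ with the supports of the filler arrays, and the observation that a $(2,d)$-diagonal array forces $n$ even --- are consistent with, and slightly more detailed than, the paper's own argument.
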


\begin{proof}
(1) Let $A$ be an integer cyclically $k$-diagonal $\H_k(n;k)$ with $n\geq k+4$.
Let $B$ be the cyclically $4$-diagonal array of size $n$ constructed in Proposition \ref{prop:4}
if $k$ is even and in Proposition \ref{prop:4B} if $k$ is odd.
Since  $n\geq k+4$, starting from $B$ it is possible to construct a cyclically $4$-diagonal array $\tilde B$ such that $skel(A) \cap skel(\tilde B)=\emptyset$ and $A\cup \tilde B$ is cyclically $(k+4)$-diagonal.
Hence the result follows by Theorem \ref{thm:Ext4}.

(2) Let $A$ be an integer cyclically $(2,d)$-diagonal $\H_{2d}(n;2d)$ with $n\geq 2d+4$.
Let $B$ be the cyclically $(2,2)$-diagonal array constructed in Proposition \ref{prop:4B2}.
Reasoning as in case (1), since $n\geq 2d+4$ we can take $\tilde B$ such that $skel(A) \cap skel(\tilde B)=\emptyset$ and $A \cup \tilde B$ is  cyclically $(2,d+2)$-diagonal. Hence the result follows by Theorem \ref{thm:Ext4}.

  (3) Let $A$ be an integer cyclically $(2,d)$-diagonal $\H_{2d-1}(n;2d-1)$ with $n\equiv 0 \pmod 4$ and $n\geq 2d+6$.
  Let $B$ be  the cyclically $(2,3)$-diagonal array constructed in Proposition \ref{prop:6Finta}.
  Reasoning as in case (1), since $n\geq 2d+6$ we can take $\tilde B$ such that $skel(A) \cap skel(\tilde B)=\emptyset$
	and $A\cup \tilde B$ is cyclically 	$(2,d+3)$-diagonal.
  Hence the result follows by Theorem \ref{thm:Ext4}.
\end{proof}

\begin{ex}
We consider case (3) of previous theorem for $n=12$ and $d=2$.
Taking the integer cyclically $(2,2)$-diagonal $\H_{3}(12;3)$ of Example \ref{123}
and the cyclically $(2,3)$-diagonal array $B$ of Example \ref{414}, we obtain an integer cyclically $(2,5)$-diagonal $\H_{9}(12;9)$.
The elements in bold are those of $\tilde B\pm 37$.
  \begin{center}
\begin{footnotesize}
$\begin{array}{|r|r|r|r|r|r|r|r|r|r|r|r|}\hline
 -5 &  18 &   &  \mathbf{-38} &  \mathbf{42} &  \mathbf{39} &  \mathbf{-44} &  \mathbf{-46} &  \mathbf{47} &   &   &  -13 \\\hline
 -32 &  1 &  31 &  \mathbf{40} &  \mathbf{-41} &  \mathbf{-43} &  \mathbf{45} &  \mathbf{48} &  \mathbf{-49} &   &   &  \\ \hline
  &  -19 &  2 &  17 &   & \mathbf{ -51} &  \mathbf{55} &  \mathbf{52} & \mathbf{ -57} &  \mathbf{-59} &  \mathbf{60} &   \\ \hline
  &   &  -33 &  3 &  30 &  \mathbf{53} &  \mathbf{-54} &  \mathbf{-56} &  \mathbf{58} &  \mathbf{61} &  \mathbf{-62} &   \\ \hline
 \mathbf{72} &   &   &  -20 &  4 &  16 &   &  \mathbf{-63} & \mathbf{ 67} &  \mathbf{64} &  \mathbf{-69} &  \mathbf{-71} \\ \hline
 \mathbf{-74} &   &   &   &  -34 &  12 &  22 &  \mathbf{65} &  \mathbf{-66} &  \mathbf{-68} &  \mathbf{70} &  \mathbf{73} \\ \hline
 \mathbf{-82} &  \mathbf{-84} &  \mathbf{85} &   &   &  -28 &  7 &  21 &   &  \mathbf{-76} &  \mathbf{80} &  \mathbf{77} \\ \hline
 \mathbf{83 }&  \mathbf{86} &  \mathbf{-87} &   &   &   &  -29 &  -6 &  35 &  \mathbf{78} &  \mathbf{-79} &  \mathbf{-81} \\ \hline
 \mathbf{92} &  \mathbf{89 }&  \mathbf{-94} &  \mathbf{-96} &  \mathbf{97} &   &   &  -15 &  -8 &  23 &   &  \mathbf{-88} \\ \hline
 \mathbf{-91} &  \mathbf{-93} &  \mathbf{95} &  \mathbf{98} &  \mathbf{-99} &   &   &   &  -27 &  -9 &  36 &  \mathbf{90} \\ \hline
  &  \mathbf{-101} &  \mathbf{105} &  \mathbf{102} & \mathbf{ -107} &  \mathbf{-109} &  \mathbf{110} &   &   &  -14 &  -10 &  24 \\\hline
 37 &  \mathbf{103} &  \mathbf{-104} &  \mathbf{-106} &  \mathbf{108} &  \mathbf{111} &  \mathbf{-112} &   &   &   &  -26 &  -11 \\
\hline
\end{array}$
\end{footnotesize}
\end{center}
\end{ex}

\section{Direct constructions of $\H_k(n;k)$}\label{sec:constructions}

In this section we give direct constructions of integer  $\H_k(n;k)$ with
$k=3,5,6$, since the case $k=4$ has been already considered in Proposition \ref{prop:4}.

\begin{prop}\label{prop:3odd}
There exists an integer cyclically $3$-diagonal $\H_3(n;3)$ for $n\equiv 3 \pmod 4$.
\end{prop}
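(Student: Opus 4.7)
Write $n=4m+3$ with $m\geq 0$. Following the template of Propositions \ref{prop:4} and \ref{prop:4B}, I aim to construct the desired $\H_3(n;3)$, call it $A$, on three consecutive diagonals (say $D_n, D_1, D_2$) of an $n\times n$ array, using a small number of $diag$ chains together with a constant number of ad hoc cells near the cyclic wrap-around.

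More concretely, I will place three arithmetic progressions of step $\pm 1$, one on each of the three diagonals, using the $diag$ procedure. The starting values and the signs of the three progressions will be chosen so that for every generic row $i$ (i.e., for $i$ bounded away from the wrap-around), the three entries that lie in row $i$ at columns $i-1, i, i+1$ sum to $0$ in $\Z$; this holds uniformly in $i$ because the three chain increments cancel. The analogous property for columns will follow from the same choice, since each generic column also intersects the three chains in three aligned positions. Near the boundary (rows involving the wrap cell $(n,1)$, and columns $1$ and $n$), I will fill a fixed number of cells ad hoc so that the remaining row and column sums also vanish.

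The key design constraint is that $supp(A)=[1,3n+1]\setminus\{2n+1\}$: the value $2n+1$ must be skipped. I handle this by breaking one of the three $diag$ chains into two sub-chains, with a jump of size $2$ at the index where the progression would otherwise take the value $2n+1$. The congruence $n\equiv 3\pmod 4$ enters here: it guarantees that $nk\equiv 0,3\pmod 4$ (the parity obstruction of Corollary \ref{condnecc}), which in turn is exactly what is needed for the global signed sum to balance once the forbidden value is omitted, and for the jump to be absorbed without destroying any row or column sum. Once the construction is written down, verification proceeds in the same three routine steps as in Proposition \ref{prop:4}: (i) for each generic row, read off the three entries from the $diag$ chains and check the sum telescopes to $0$; for each boundary row, check a short list of explicit identities among the ad hoc entries; (ii) repeat the same check on columns; (iii) compute $supp(A)$ as a disjoint union of intervals coming from the $diag$ chains together with the finitely many ad hoc values, and verify it equals $[1,3n+1]\setminus\{2n+1\}$.

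The main obstacle is the design of the ad hoc wrap-around cells: they must simultaneously (a) make the few boundary rows and columns sum to zero, (b) avoid any clash with the generic $diag$ entries, and (c) account for the missing value $2n+1$ in the support. In the analogous $\H_4$ construction this required four ad hoc cells; here I expect a similarly small number of ad hoc entries to suffice for all $m\geq 0$, together with one or two small initial cases ($n=3$, possibly $n=7$) to be exhibited explicitly. Once those exceptional cases are in hand, the rest of the proof is a mechanical application of the $diag$ bookkeeping already used throughout Section \ref{sec:extension}.
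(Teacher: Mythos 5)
Your plan hinges on placing ``three arithmetic progressions of step $\pm1$, one on each of the three diagonals,'' with the increments cancelling so that every generic row and every generic column sums to zero. That cannot work, and the obstruction is elementary. Suppose the entry of the main diagonal in row $i$ is $\alpha+iu$, the entry of the lower diagonal in row $i$ (cell $(i,i-1)$) is $\beta+iv$, and the entry of the upper diagonal in row $i$ (cell $(i,i+1)$) is $\gamma+iw$. Generic rows sum to zero for all $i$ iff $u+v+w=0$ and $\alpha+\beta+\gamma=0$. But the generic column $j$ meets these chains in the cells $(j,j)$, $(j+1,j)$, $(j-1,j)$, so its sum is $(\alpha+\beta+\gamma)+j(u+v+w)+(v-w)=v-w$, forcing $v=w$ and hence $u=-2v$. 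With value-steps of absolute value $1$ this is impossible. So the increments can be made to cancel along rows or along columns, but not both, and no finite number of ad hoc cells near the wrap-around repairs a defect that occurs in every generic column.

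The paper's actual construction is built precisely to evade this: it uses ten \texttt{diag} chains, not three. The main diagonal is split into two halves carrying chains with opposite value-steps ($\texttt{A}$ with $\Delta_2=1$ on rows $2,\dots,\frac{n-1}{2}$ and $\texttt{B}$ with $\Delta_2=-1$ on the second half), and each off-diagonal carries \emph{two interleaved} chains with position increment $\Delta_1=2$ (the pairs $\texttt{C}/\texttt{D}$ and $\texttt{E}/\texttt{F}$ in the first half, and $\texttt{G}/\texttt{H}/\texttt{I}/\texttt{J}$ in the second), so that consecutive rows draw their off-diagonal entries from different progressions; this extra freedom is what lets both the row and the column telescoping identities close, with only two ad hoc cells at $(1,1)$ and $(\frac{n+1}{2},\frac{n+1}{2})$. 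Your remaining remarks (the jump over $2n+1$, the role of $n\equiv 3\pmod 4$) describe necessary conditions rather than a mechanism, so as written the proposal is a programme whose central step fails rather than a proof.
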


\begin{proof}
We construct an $n\times n$ array $A$ using the following procedures labeled $\texttt{A}$ to $\texttt{J}$:
$$\begin{array}{ll}
\texttt{A}:\;   diag\left(2,2,1,1,1,\frac{n-3}{2}\right); &
\texttt{B}:\;   diag\left(\frac{n+3}{2},\frac{n+3}{2},-\frac{n+1}{2},1, -1,\frac{n-1}{2}\right);\\[3pt]
\texttt{C}:\;   diag\left(2,1,-\frac{5n+3}{2},2,-1,\frac{n+1}{4}\right); &
\texttt{D}:\;   diag\left(3,2,-\frac{3n+3}{2},2,-1,\frac{n-3}{4}\right);\\[3pt]
\texttt{E}:\;   diag\left(1,2,\frac{3n+1}{2},2,-1,\frac{n+1}{4}\right); &
\texttt{F}:\;   diag\left(2,3,\frac{5n+1}{2},2,-1,\frac{n-3}{4}\right);\\[3pt]
\texttt{G}:\;   diag\left(\frac{n+1}{2},\frac{n+3}{2},\frac{7n+3}{4},2,1,\frac{n+1}{4}\right); &
\texttt{H}:\;   diag\left(\frac{n+3}{2},\frac{n+5}{2},\frac{11n+7}{4},2,1,\frac{n+1}{4}\right);\\[3pt]
\texttt{I}:\;   diag\left(\frac{n+3}{2},\frac{n+1}{2},-\frac{9n+5}{4},2,1,\frac{n+1}{4}\right); &
\texttt{J}:\;   diag\left(\frac{n+5}{2},\frac{n+3}{2},-\frac{5n+1}{4},2,1,\frac{n+1}{4}\right).
\end{array}$$
We also fill the following cells in an \textit{ad hoc} manner:
$$\begin{array}{lcl}
 A\left[1,1\right]=-\frac{n-1}{2}, & \quad  & A\left[\frac{n+1}{2},\frac{n+1}{2}\right]=n.
  \end{array}$$

We now prove that the array constructed above is an integer  cyclically $3$-diagonal $\H_3(n;3)$.
To aid in the proof we give a schematic picture of where each of the diagonal procedures fills cells (see Figure \ref{fig2}).
We have placed an $\texttt{X}$ in the \textit{ad hoc} cells.
Note that each row and each column contains exactly $3$ elements. We now check that the elements in every row sum to $0$
(in $\Z$).

\begin{figure}[!ht]
\begin{footnotesize}
 \begin{center}
\begin{tabular}{|c|c|c|c|c|c|c|c|c|c|c|}
  \hline
 \texttt{X} & \texttt{E} &  & & & & & & &  & \texttt{J}\\ \hline
 \texttt{C} & \texttt{A} & \texttt{F} & & & & & & & &  \\ \hline
 &\texttt{D} & \texttt{A} & \texttt{E} & & & & & & &    \\ \hline
 & &\texttt{C} & \texttt{A} & \texttt{F}  & & & & & &   \\ \hline
 & & &\texttt{D} & \texttt{A} & \texttt{E} & & & & &    \\ \hline
 & & & &\texttt{C} & \texttt{X} & \texttt{G} & & & &    \\ \hline
 & & & & &\texttt{I} & \texttt{B} & \texttt{H} & & &     \\ \hline
 & & & & & &\texttt{J} & \texttt{B} & \texttt{G} & &    \\ \hline
 & & & & & & &\texttt{I} & \texttt{B} & \texttt{H} &    \\ \hline
 & & & & & & & &\texttt{J} & \texttt{B} & \texttt{G}    \\ \hline
\texttt{H} & & & & & & &  & & \texttt{I} & \texttt{B}     \\ \hline
\end{tabular}
\end{center}
 \end{footnotesize}
\caption{Scheme of construction with $n=11$.}\label{fig2}
\end{figure}

\begin{description}
\item[Row $1$] There is an \textit{ad hoc} value plus the first of the \texttt{E} diagonal as well as the last of the
\texttt{J} diagonal.
The sum is $$-\frac{n-1}{2}+\frac{3n+1}{2}-(n+1)=0.$$
\item[Row $2$  to $\frac{n-1}{2}$] There are two cases depending on whether the row $r$ is even or odd.
If $r$ is even, then write $r=2i+2$ where $i\in\left[0,\frac{n-7}{4}\right]$. Notice that from the \texttt{C},
\texttt{A} and \texttt{F} diagonal cells we get the following sum:
$$\left(-\frac{5n+3}{2}-i\right)+\left(1+2i\right)+\left(\frac{5n+1}{2}-i\right)=0.$$
If $r$ is odd, then write $r=2i+3$ where $i\in\left[0,\frac{n-7}{4}\right]$.
From the \texttt{D}, \texttt{A} and \texttt{E} diagonal cells we get the following sum:
$$\left(-\frac{3n+3}{2}-i\right)+\left(2+2i\right)+\left(\frac{3n-1}{2}-i\right)=0.$$
\item[Row $\frac{n+1}{2}$] We add the last value of the \texttt{C} diagonal, an  \textit{ad hoc} value and the first of
the \texttt{G} diagonal:
$$-\frac{11n+3}{4}+n+\frac{7n+3}{4}=0.$$
\item[Row $\frac{n+3}{2}$ to $n$] Note that $\frac{n+3}{2}$ is odd. There are two cases depending on whether the row $r$
is odd or even.
If $r$ is odd, then write $r=\frac{n+3}{2}+2i$ where $i\in\left[0,\frac{n-3}{4}\right]$.
Notice that from the \texttt{I}, \texttt{B} and \texttt{H} diagonal cells we get the following sum:
$$\left(-\frac{9n+5}{4}+i\right)+\left(-\frac{n+1}{2}-2i\right)+\left(\frac{11n+7}{4}+i\right)=0.$$
If $r$ is even, then write $r=\frac{n+5}{2}+2i$ where $i\in\left[0,\frac{n-7}{4}\right]$.
Notice that from the \texttt{J}, \texttt{B} and \texttt{G} diagonal cells we get the following sum:
$$\left(-\frac{5n+1}{4}+i\right)+\left(-\frac{n+1}{2}-2i\right)+\left(\frac{7n+3}{4}+i\right)=0.$$
\end{description}
So we have shown that all row sums are zero. Next we check that the columns all add to zero.
\begin{description}
\item[Column $1$] There is an \textit{ad hoc} value plus the first of the \texttt{C} diagonal as well as the last of the
\texttt{H} diagonal.
The sum is $$-\frac{n-1}{2}-\frac{5n+3}{2}+(3n+1)=0.$$
\item[Column $2$  to $\frac{n-1}{2}$] There are two cases depending on whether the column $c$ is even or odd.
If $c$ is even, then write $c=2i+2$ where $i\in\left[0,\frac{n-7}{4}\right]$.
Notice that from the \texttt{E}, \texttt{A} and \texttt{D} diagonal cells we get the following sum:
$$\left(\frac{3n+1}{2}-i\right)+(1+2i)+\left(-\frac{3n+3}{2}-i\right)=0.$$
If $c$ is odd, then write $c=2i+3$ where $i\in\left[0,\frac{n-7}{4}\right]$.
From the \texttt{F}, \texttt{A} and \texttt{C} diagonal cells we get the following sum:
$$\left(\frac{5n+1}{2}-i\right)+(2+2i)+\left(-\frac{5n+5}{2}-i\right)=0.$$
\item[Column $\frac{n+1}{2}$] We add the last value of the \texttt{E} diagonal, an  \textit{ad hoc} value and the first of
the \texttt{I} diagonal:
$$\frac{5n+5}{4}+n-\frac{9n+5}{4}=0.$$
\item[Column $\frac{n+3}{2}$ to $n$] Note that $\frac{n+3}{2}$ is odd.
There are two cases depending on whether the column $c$ is odd or even.
If $c$ is odd, then write $c=\frac{n+3}{2}+2i$ where $i\in\left[0,\frac{n-3}{4}\right]$.
Notice that from the \texttt{G}, \texttt{B} and \texttt{J} diagonal cells we get the following sum:
$$\left(\frac{7n+3}{4}+i\right)+\left(-\frac{n+1}{2}-2i\right)+\left(-\frac{5n+1}{4}+i\right)=0.$$
If $c$ is even, then write $c=\frac{n+5}{2}+2i$ where $i\in\left[0,\frac{n-7}{4}\right]$.
    Notice that from the \texttt{H}, \texttt{B} and \texttt{I} diagonal cells we get the following sum:
    $$\left(\frac{11n+7}{4}+i\right)+\left(-\frac{n+3}{2}-2i\right)+\left(-\frac{9n+1}{4}+i\right)=0.$$
\end{description}
So we have shown that each column sums to $0$. Now we consider the support of $A$:
$$\begin{array}{rcl}
supp(A)& = & \left[1,\frac{n-3}{2}\right]_{(\texttt{A})}\cup \{ \frac{n-1}{2}\} \cup
\left[\frac{n+1}{2},n-1\right]_{(\texttt{B})} \cup\{ n\} \cup \left[n+1,\frac{5n+1}{4}\right]_{(\texttt{J})}\cup \\[3pt]
&&\left[\frac{5n+5}{4},\frac{3n+1}{2}\right]_{(\texttt{E})}
 \cup \left[\frac{3n+3}{2},\frac{7n-1}{4}\right]_{(\texttt{D})} \cup \left[\frac{7n+3}{4},2n\right]_{(\texttt{G})}\cup\\[3pt]
 && \left[2n+2,\frac{9n+5}{4}\right]_{(\texttt{I})}\cup
 \left[\frac{9n+9}{4},\frac{5n+1}{2}\right]_{(\texttt{F})}\cup \left[\frac{5n+3}{2},\frac{11n+3}{4}\right]_{(\texttt{C})}
\cup \\[3pt]
&&\left[\frac{11n+7}{4},3n+1\right]_{(\texttt{H})}\\[3pt]
&= & [1,3n+1]\setminus \{2n+1\}.
\end{array}$$
\noindent Thus, $A$ is an integer cyclically 3-diagonal $\H_3(n;3)$ for $n\equiv 3\pmod 4$.
\end{proof}

\begin{ex}
Following the proof of Proposition \ref{prop:3odd} we obtain the
integer $\H_3(11;$ $3)$ below.
 \begin{center}
\begin{footnotesize}
$\begin{array}{|r|r|r|r|r|r|r|r|r|r|r|}
\hline -5 & 17 &   &  &  &  &  & & & & -12  \\
\hline  -29 & 1 & 28 &  &  &  &  &  & & & \\
\hline   & -18 & 2 & 16 &   &  &  & & & & \\
\hline   &   & -30 & 3 & 27 &  &  &  & & &\\
\hline   &  &  &   -19 & 4 & 15  & & & & &\\
\hline   &  &  &   &   -31 & 11 & 20 & & & & \\
\hline    &  &  &  &  & -26 & -6 & 32 &  & & \\
\hline   &  &  &  &  &   & -14 & -7  & 21 &  &\\
\hline   &  &  &   &  &  &  & -25 & -8 & 33 &   \\
\hline    &  &  &  &  &  &  &  & -13 & -9 & 22 \\
\hline  34 &  &  &  &  &   &  &   & & -24 & -10\\
\hline
\end{array}$
\end{footnotesize}
\end{center}
\end{ex}

\begin{prop}\label{prop:3even}
There exists an integer cyclically $3$-diagonal  $\H_3(n;3)$ for $n\equiv 0 \pmod 4$.
\end{prop}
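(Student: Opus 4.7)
The plan is to mirror the proof of Proposition \ref{prop:3odd}, constructing an $n \times n$ cyclically $3$-diagonal array $A$ whose filled cells are the three consecutive diagonals $D_n$, $D_1$, $D_2$, by specifying a small collection of $\texttt{diag}$ procedures together with a handful of ad hoc cells. The target support is $[1,3n+1]\setminus\{2n+1\}$, so the $3n$ entries (with signs) must be placed into the $3n$ filled cells so that the row and column sums all vanish in $\Z$.

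Since $n\equiv 0\pmod 4$, the natural split point is no longer the single middle row $\frac{n+1}{2}$ but a pair of rows near $\frac{n}{2}$ and $\frac{n}{2}+1$, so I would partition the three diagonals into an ``upper'' block (roughly rows $1$ to $n/2$) and a ``lower'' block (rows $n/2+1$ to $n$), and within each block further split by parity of the row index. This yields roughly eight diagonal procedures: one pair (of opposite signs and complementary parities) for the main diagonal in each half, and one pair each for the super- and sub-diagonals in each half. The parameters $(r,c,s,\Delta_1,\Delta_2,\ell)$ of these procedures are to be chosen so that within each row the values contributed by the three diagonals telescope to $0$, and analogously for the columns; the few leftover cells near the first row, the last row, and around the central split are filled with ad hoc values whose sums close the remaining rows and columns.

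Once the procedures are fixed, the verification is mechanical and parallels Proposition \ref{prop:3odd}: do a case analysis on the row index (even vs.\ odd, upper vs.\ lower half, and the finitely many special rows), and in each case sum the three contributed entries symbolically to check it is $0$; repeat verbatim for columns. A final computation lists the supports of the individual chains, checks they are pairwise disjoint, and verifies that their union together with the ad hoc entries is exactly $[1,3n+1]\setminus\{2n+1\}$.

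The main obstacle is the bookkeeping: for $n\equiv 0\pmod 4$ the ``central'' cells $\bigl(\frac{n}{2},\frac{n}{2}\bigr)$, $\bigl(\frac{n}{2}+1,\frac{n}{2}+1\bigr)$, and their neighbors on $D_n,D_2$ straddle the split between the upper and lower halves, so the starting values and step sizes of the chains in the two halves must be reconciled at this seam, and the element $2n+1$ (the unique missing value in the support) must not be produced by any chain. Any small discrepancy must be absorbed into the two or three ad hoc cells while keeping all row and column sums zero, which essentially determines the ad hoc values uniquely and provides the correctness check for the whole construction.
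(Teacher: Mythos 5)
Your proposal correctly identifies the overall strategy that the paper uses for $n\equiv 0\pmod 4$ -- the same machinery as Proposition \ref{prop:3odd}, namely a family of $diag$ procedures split by upper/lower half and by parity of the row index, with ad hoc cells concentrated in the first row, the last column/row wrap-around, and a ``seam'' of special rows and columns around $\frac{n}{2}$. You also correctly diagnose where the difficulty lies. However, what you have written is a plan for a proof, not a proof: the entire mathematical content of this proposition is the exhibition of explicit parameters $(r,c,s,\Delta_1,\Delta_2,\ell)$ for the chains and explicit ad hoc values such that (i) every row and column sum telescopes to $0$, \emph{and} (ii) the supports of the chains and the ad hoc entries are pairwise disjoint and tile $[1,3n+1]\setminus\{2n+1\}$ exactly. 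Neither of these is automatic, and the two requirements fight each other: once you decide which absolute values go on which diagonal, the signs and orderings needed for the row sums constrain the column sums, and it is not a priori clear a consistent assignment exists. Asserting that the leftover discrepancies ``must be absorbed'' into the ad hoc cells and that this ``essentially determines'' them is precisely the step that needs to be demonstrated rather than assumed.

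For comparison, the paper's construction uses \emph{ten} diagonal chains ($\texttt{A}$ through $\texttt{J}$) and \emph{eight} ad hoc cells (not two or three), with three consecutive special rows $\frac{n}{2},\frac{n+2}{2},\frac{n+4}{2}$ and three special columns forming a $3\times 3$ block of ad hoc values at the seam; the supports of the chains are intervals such as $\left[1,\frac{n-4}{2}\right]$, $\left[\frac{n+4}{2},n-1\right]$, $\left[n+1,\frac{5n}{4}\right]$, etc., which interleave in a delicate way to cover $[1,3n+1]\setminus\{2n+1\}$. Until you write down such data and carry out the (admittedly mechanical) verification of all row sums, column sums, and the support computation, the existence claim is not established.
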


\begin{proof}
We construct an $n\times n$ array $A$ using the following procedures labeled $\texttt{A}$ to $\texttt{J}$:
$$\begin{array}{lcl}
\texttt{A}:\; diag\left(2,2,1,1,1,\frac{n-4}{2}\right); & \hfill &
\texttt{B}:\; diag\left(\frac{n+6}{2},\frac{n+6}{2},-\frac{n+4}{2},1, -1,\frac{n-4}{2}\right);\\[3pt]
\texttt{C}:\; diag\left(2,1,-\frac{5n+4}{2},2,-1,\frac{n}{4}\right); & &
\texttt{D}:\; diag\left(3,2,-\frac{3n+2}{2},2,-1,\frac{n-4}{4}\right);\\[3pt]
\texttt{E}:\; diag\left(1,2,\frac{3n}{2},2,-1,\frac{n}{4}\right); &&
\texttt{F}:\; diag\left(2,3,\frac{5n+2}{2},2,-1,\frac{n-4}{4}\right);\\[3pt]
\texttt{G}:\; diag\left(\frac{n+6}{2},\frac{n+4}{2},-\frac{5n}{4},2,1,\frac{n}{4}\right); &&
\texttt{H}:\; diag\left(\frac{n+8}{2},\frac{n+6}{2},-\frac{9n}{4},2,1,\frac{n-4}{4}\right);\\[3pt]
\texttt{I}:\; diag\left(\frac{n+4}{2},\frac{n+6}{2},\frac{11n+8}{4},2,1,\frac{n}{4}\right); &&
\texttt{J}:\; diag\left(\frac{n+6}{2},\frac{n+8}{2},\frac{7n+8}{4},2,1,\frac{n-4}{4}\right).
\end{array}$$
We also fill the following cells in an \textit{ad hoc} manner:
$$\begin{array}{lclcl}
A\left[1,1\right]=-\frac{n-2}{2}, & \quad  &
A\left[\frac{n}{2},\frac{n}{2}\right]=n, & \quad &
A\left[\frac{n}{2},\frac{n+2}{2}\right]=\frac{7n+4}{4}, \\[3pt]
A\left[\frac{n+2}{2},\frac{n}{2}\right]=-\frac{9n+4}{4},& &
A\left[\frac{n+2}{2},\frac{n+2}{2}\right]=\frac{n+2}{2},& &
A\left[\frac{n+2}{2},\frac{n+4}{2}\right]=\frac{7n}{4}, \\[3pt]
A\left[\frac{n+4}{2},\frac{n+2}{2}\right]=-\frac{9n+8}{4}, &&
A\left[\frac{n+4}{2},\frac{n+4}{2}\right]=-\frac{n}{2}.
  \end{array}$$

We now prove that the array constructed above is an integer  $\H_3(n;3)$.
To aid in the proof we give a schematic picture of where each of the diagonal procedures fills cells (see Figure \ref{fig3}).
We have placed an $\texttt{X}$ in the \textit{ad hoc} cells.
Note that each row and each column contains exactly $3$ elements. We now check that the elements in every row sum to $0$
(in $\Z$).

\begin{figure}
\begin{footnotesize}
\begin{center}
\begin{tabular}{|c|c|c|c|c|c|c|c|c|c|c|c|}
  \hline
 \texttt{X} & \texttt{E} &  & & & & & & & &  & \texttt{G}\\ \hline
 \texttt{C} & \texttt{A} & \texttt{F} & & &  & & & & & &  \\ \hline
 &\texttt{D} & \texttt{A} & \texttt{E} & & & & & &  & &   \\ \hline
 & &\texttt{C} & \texttt{A} & \texttt{F}  & & & & & & &    \\ \hline
 & & &\texttt{D} & \texttt{A} & \texttt{E} & & & & & &     \\ \hline
  & & & &\texttt{C} & \texttt{X} & \texttt{X} & & & & &    \\ \hline
 & & & & &\texttt{X} & \texttt{X} & \texttt{X} & & & &   \\ \hline
 & & & & & &\texttt{X} & \texttt{X} & \texttt{I} & & &    \\ \hline
 & & & & & & &\texttt{G} & \texttt{B} & \texttt{J} & &    \\ \hline
 & & & & & & & &\texttt{H} & \texttt{B} & \texttt{I} &    \\ \hline
& & & & & & & & &\texttt{G} & \texttt{B} & \texttt{J}    \\ \hline
\texttt{I} & & & & & & & & & & \texttt{H} & \texttt{B}     \\ \hline
\end{tabular}
\end{center}
\end{footnotesize}
\caption{Scheme of construction with $n=12$.}\label{fig3}
\end{figure}

\begin{description}
\item[Row $1$] There is an \textit{ad hoc} value plus the first of the \texttt{E} diagonal as well as the last of the
\texttt{G} diagonal.
The sum is $$-\frac{n-2}{2}+\frac{3n}{2}-(n+1)=0.$$
\item[Row $2$  to $\frac{n-2}{2}$] There are two cases depending on whether the row $r$ is even or odd.
If $r$ is even, then write $r=2i+2$ where $i\in\left[0,\frac{n}{4}-2\right]$. Notice that from the \texttt{C},
\texttt{A} and \texttt{F} diagonal cells we get the following sum:
$$\left(-\frac{5n+4}{2}-i\right)+\left(1+2i\right)+\left(\frac{5n+2}{2}-i\right)=0.$$
If $r$ is odd, then write $r=2i+3$ where $i\in\left[0,\frac{n}{4}-2\right]$.
From the \texttt{D}, \texttt{A} and \texttt{E} diagonal cells we get the following sum:
$$\left(-\frac{3n+2}{2}-i\right)+\left(2+2i\right)+\left(\frac{3n-2}{2}-i\right)=0.$$
\item[Row $\frac{n}{2}$] We add the last value of the \texttt{C} diagonal with two  \textit{ad hoc} values:
$$-\frac{11n+4}{4}+n+\frac{7n+4}{4}=0.$$
\item[Row $\frac{n+2}{2}$] This row contains $3$ \textit{ad hoc} values. The sum is
$$-\frac{9n+4}{4}+\frac{n+2}{2}+\frac{7n}{4}=0.$$
\item[Row $\frac{n+4}{2}$] There are two \textit{ad hoc} values plus the first of the \texttt{I} diagonal.
The sum is $$-\frac{9n+8}{4}-\frac{n}{2}+\frac{11n+8}{4}=0.$$
\item[Row $\frac{n+6}{2}$ to $n$] Note that $\frac{n+6}{2}$ is odd. There are two cases depending on whether the row $r$ is odd or even.
If $r$ is odd, then write $r=\frac{n+6}{2}+2i$ where $i\in\left[0,\frac{n}{4}-2\right]$.
Notice that from the \texttt{G}, \texttt{B} and \texttt{J} diagonal cells we get the following sum:
$$\left(-\frac{5n}{4}+i\right)+\left(-\frac{n+4}{2}-2i\right)+\left(\frac{7n+8}{4}+i\right)=0.$$
If $r$ is even, then write $r=\frac{n+8}{2}+2i$ where $i\in\left[0,\frac{n}{4}-2\right]$.
Notice that from the \texttt{H}, \texttt{B} and \texttt{I} diagonal cells we get the following sum:
$$\left(-\frac{9n}{4}+i\right)+\left(-\frac{n+6}{2}-2i\right)+\left(\frac{11n+12}{4}+i\right)=0.$$
\end{description}
So we have shown that all row sums are zero. Next we check that the columns all add to zero.
\begin{description}
\item[Column $1$] There is an \textit{ad hoc} value plus the first of the \texttt{C} diagonal as well as the last of the
\texttt{I} diagonal.
The sum is $$-\frac{n-2}{2}-\frac{5n+4}{2}+(3n+1)=0.$$
\item[Column $2$  to $\frac{n-2}{2}$] There are two cases depending on whether the column $c$ is even or odd.
If $c$ is even, then write $c=2i+2$ where $i\in\left[0,\frac{n}{4}-2\right]$.
Notice that from the \texttt{E}, \texttt{A} and \texttt{D} diagonal cells we get the following sum:
$$\left(\frac{3n}{2}-i\right)+(1+2i)+\left(-\frac{3n+2}{2}-i\right)=0.$$
If $c$ is odd, then write $c=2i+3$ where $i\in\left[0,\frac{n}{4}-2\right]$.
From the \texttt{F}, \texttt{A} and \texttt{C} diagonal cells we get the following sum:
$$\left(\frac{5n+2}{2}-i\right)+(2+2i)+\left(-\frac{5n+6}{2}-i\right)=0.$$
\item[Column $\frac{n}{2}$] We add the last value of the \texttt{E} diagonal with two  \textit{ad hoc} values:
$$\frac{5n+4}{4}+n-\frac{9n+4}{4}=0.$$
\item[Column $\frac{n+2}{2}$] This column contains $3$ \textit{ad hoc} values. The sum is
$$\frac{7n+4}{4}+\frac{n+2}{2}-\frac{9n+8}{4}=0.$$
\item[Column $\frac{n+4}{2}$] There are two \textit{ad hoc} values plus the first of the \texttt{G} diagonal.
The sum is $$\frac{7n}{4}-\frac{n}{2}-\frac{5n}{4}=0.$$
\item[Column $\frac{n+6}{2}$ to $n$]  There are two cases depending on whether the column $c$ is odd or even.
If $c$ is odd, then write $c=\frac{n+6}{2}+2i$ where $i\in\left[0,\frac{n}{4}-2\right]$.
Notice that from the \texttt{I}, \texttt{B} and \texttt{H} diagonal cells we get the following sum:
$$\left(\frac{11n+8}{4}+i\right)+\left(-\frac{n+4}{2}-2i\right)+\left(-\frac{9n}{4}+i\right)=0.$$
If $c$ is even, then write $c=\frac{n+8}{2}+2i$ where $i\in\left[0,\frac{n}{4}-2\right]$.
Notice that from the \texttt{J}, \texttt{B} and \texttt{G} diagonal cells we get the following sum:
$$\left(\frac{7n+8}{4}+i\right)+\left(-\frac{n+6}{2}-2i\right)+\left(-\frac{5n-4}{4}+i\right)=0.$$
\end{description}
So we have shown that each column sums to $0$. Now we consider the support of $A$:
$$
\begin{array}{rcl}
supp(A) & =& \left[1,\frac{n-4}{2}\right]_{(\texttt{A})}\cup \left\{\frac{n-2}{2}, \frac{n}{2},\frac{n+2}{2}\right\} \cup
\left[\frac{n+4}{2},n-1\right]_{(\texttt{B})}\cup \{ n\} \cup\\[3pt]
&& \left[n+1,\frac{5n}{4}\right]_{(\texttt{G})} \cup  \left[\frac{5n+4}{4},\frac{3n}{2}\right]_{(\texttt{E})}
\cup\left[\frac{3n+2}{2},\frac{7n-4}{4}  \right]_{(\texttt{D})}\cup
\left\{\frac{7n}{4}, \frac{7n+4}{4}\right\}\cup\\[3pt]
&& \left[\frac{7n+8}{4},2n\right]_{(\texttt{J})} \cup
\left[2n+2,\frac{9n}{4}\right]_{(\texttt{H})}\cup  \{ \frac{9n+4}{4}, \frac{9n+8}{4}\} \cup
\left[\frac{9n+12}{4},\frac{5n+2}{2}\right]_{(\texttt{F})} \\[3pt]
&& \cup\left[\frac{5n+4}{2},\frac{11n+4}{4}\right]_{(\texttt{C})}\cup
\left[\frac{11n+8}{4},3n+1\right]_{(\texttt{I})}\\
&= & [1,3n+1]\setminus \{2n+1\}.
\end{array}$$
Thus, $A$ is an integer cyclically $3$-diagonal $\H_3(n;3)$ for $n\equiv 0\pmod 4$.
\end{proof}

The integer $\H_3(12;3)$ obtained following the proof of Proposition \ref{prop:3even}
is given in Example \ref{123}.

\begin{prop}\label{prop:5}
Let $n\equiv 3 \pmod 4$ with $n\geq7$. Then  there exists an integer cyclically $5$-diagonal $\H_5(n;5)$.
\end{prop}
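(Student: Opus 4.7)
The plan is to extend the direct diagonal construction used for $\H_3(n;3)$ in Proposition \ref{prop:3odd} from three to five diagonals. Since $n\equiv 3\pmod 4$, I expect to split the $n\times n$ array around row and column $\frac{n+1}{2}$, filling the five consecutive target diagonals separately in the upper--left half and in the lower--right half with the $diag$ procedure, and then handling the boundary between the two halves and the cyclic wrap--around positions by a small, fixed number of ad hoc cells.

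Concretely, in each half I would use one procedure for the main diagonal and four for the off--diagonals (two on each side), each with step $\Delta_1=2$ and a starting row whose parity is chosen so that odd-indexed and even-indexed rows get complementary procedures, exactly as in the proof of Proposition \ref{prop:3odd}. The starting values $s$ and the increments $\Delta_2$ of each procedure are to be chosen so that the union of the intervals they cover, together with the ad hoc values at the centre and at the corners, equals precisely $[1,5n+2]\setminus\{2n+1,4n+2\}$, i.e.\ the support required by an integer $\H_5(n;5)$, skipping exactly the two nontrivial elements of the subgroup $J$ of $\Z_{10n+5}$ of order $5$.

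Verification is then routine and proceeds in three steps. First, each row sum is checked by a case analysis on the parity of the row index within each half; in the generic cases the three, four, or five diagonal terms collapse to an identity that is independent of the running parameter $i$, while the handful of rows passing through the centre or through ad hoc cells are checked individually. Second, the same analysis is carried out column by column, symmetrically. Third, the support identity $\bigcup_{\texttt{P}}\mathrm{range}(\texttt{P})\cup\{\text{ad hoc values}\}=[1,5n+2]\setminus\{2n+1,4n+2\}$ is checked by listing the interval each procedure covers. That the skeleton fills exactly five consecutive cyclic diagonals is immediate from the $\Delta_1=1,2$ steps.

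The main obstacle is combinatorial rather than conceptual: coordinating roughly ten diagonal procedures so that their outputs tile the target support with exactly two gaps at $2n+1$ and $4n+2$, while every row and column still sums to zero and while the boundary cells remain consistent with the arithmetic progressions on the diagonals. This is a delicate alignment problem, and finding starting parameters that make all three conditions compatible simultaneously typically requires some experimentation together with a small number of ad hoc overrides near the middle row/column $\frac{n+1}{2}$ and near the cyclic corner cells. Once the correct parameters are in place, all checks reduce to finite identities in the single integer variable~$i$ and in $n$.
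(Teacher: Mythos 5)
Your proposal is a plan for a proof, not a proof: for an existence statement of this kind the entire mathematical content lies in exhibiting the explicit construction, and you never do so. You correctly identify the shape of the argument (diagonal procedures plus a few \emph{ad hoc} cells, followed by routine verification of row sums, column sums, and the support identity $[1,5n+2]\setminus\{2n+1,4n+2\}$), but you defer the only hard step --- choosing starting values and increments that simultaneously make every row and column sum to zero and make the ranges tile the required support --- to ``some experimentation''. Until those parameters are written down, none of the three verification steps can even be attempted, so nothing has been established.

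Moreover, the structural guesses you do commit to differ from what turns out to work. The paper's construction for $k=5$ does \emph{not} split the array around row and column $\frac{n+1}{2}$ as the $\H_3(n;3)$ construction of Proposition \ref{prop:3odd} does; instead it runs fourteen procedures $\texttt{A}$--$\texttt{N}$ through the whole array, with the two central bands handled by procedures of step $\Delta_1=2$ (alternating on the parity of the row) but the two outermost diagonals handled by \emph{four} procedures each with step $\Delta_1=4$, so that the row and column checks split into four cases according to the congruence class of the index modulo $4$, not two cases by parity. The twelve \emph{ad hoc} cells sit in rows $1$, $2$, $n-2$ and $n$ (near the cyclic corners), not at the centre. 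So your proposed period-$2$ scheme with roughly ten procedures and a central seam is not merely unverified --- it is not the alignment that the known solution uses, and there is no evidence in your write-up that it can be made to close. To repair the proof you must actually produce and check a concrete family of procedures, as Proposition \ref{prop:5} does.
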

\begin{proof}
We construct an $n\times n$ array $A$ using the following procedures
 labeled $\texttt{A}$ to  $\texttt{N}$.
$$\begin{array}{rlcrl}
\texttt{A}: &  diag\left(3,3,\frac{n-3}{2},2,-1,\frac{n-5}{2}\right); & \hfill &
\texttt{B}: &  diag\left(4,4,-(n-2),2,1,\frac{n-3}{2}\right);\\[3pt]
\texttt{C}: &  diag\left(3,2,2n+2,2,2,\frac{n-1}{2}\right); &&
\texttt{D}: &  diag\left(4,3,2n-1,2,-2,\frac{n-3}{2}\right);\\[3pt]
\texttt{E}: &  diag\left(2,3,-2n,2,2,\frac{n-1}{2}\right); &  &
\texttt{F}: &  diag\left(3,4,-(2n+3),2,-2,\frac{n-3}{2}\right);\\[3pt]
\texttt{G}: &  diag\left(3,1,-\frac{15n+7}{4},4,1,\frac{n-3}{4}\right); &&
\texttt{H}: &  diag\left(4,2,-(3n+4),4,-1,\frac{n+1}{4}\right);\\[3pt]
\texttt{I}: &  diag\left(5,3,-\frac{19n-1}{4},4,1,\frac{n-3}{4}\right);&&
\texttt{J}: &  diag\left(6,4,-(4n+3),4,-1,\frac{n-3}{4}\right);\\[3pt]
\texttt{K}: &  diag\left(1,3,\frac{17n+9}{4},4,1,\frac{n-3}{4}\right); &&
\texttt{L}: &  diag\left(2,4,5n,4,-1,\frac{n+1}{4}\right);\\[3pt]
\texttt{M}: &  diag\left(3,5,\frac{13n+17}{4},4,1,\frac{n-3}{4}\right); &&
\texttt{N}: &  diag\left(4,6,4n+1,4,-1,\frac{n-3}{4}\right).
\end{array}$$
We also fill the following cells in an \textit{ad hoc} manner:
$$\begin{array}{lclcl}
A\left[1,1\right]=n, & \hfill &  A\left[1,2\right]=-3n, &  \hfill & A\left[1,n\right]=n+1,\\[3pt]
A\left[2,1\right]=n+2, & & A\left[2,2\right]=n-1, &  & A\left[2,n\right]=-(5n+1),\\[3pt]
A\left[n-2,n-2\right]=-\frac{n-1}{2}, & & A\left[n-2,n\right]=5n+2, && A\left[n,1\right]=-(3n+1),\\[3pt]
A\left[n,2\right]=3n+3, & & A\left[n,n-2\right]=-(3n+2), & & A\left[n,n\right]=1.
\end{array}$$

We now prove that the array constructed above is an integer $\H_5(n;5)$.
To aid in the proof we give a schematic picture of where each of the diagonal procedures fills cells (see Figure \ref{fig4}).
We have placed an $\texttt{X}$ in the \textit{ad hoc} cells.
Note that each row and each column contains exactly $5$ elements. We now check that the elements in every row sum to $0$
(in $\Z$).

\begin{figure}
 \begin{footnotesize}
\begin{center}
\begin{tabular}{|c|c|c|c|c|c|c|c|c|c|c|c|c|c|c|}
  \hline
 \texttt{X} & \texttt{X} & \texttt{K} & & & & & & & & & & & \texttt{H}&  \texttt{X}\\ \hline
 \texttt{X} & \texttt{X} & \texttt{E} &\texttt{L} & & & & & & & & & & &\texttt{X}  \\ \hline
\texttt{G} &\texttt{C} & \texttt{A} & \texttt{F} &\texttt{M} & & & & & & & & & &   \\ \hline
 & \texttt{H}&\texttt{D} & \texttt{B} & \texttt{E}  &\texttt{N} & & & & & & & & &   \\ \hline
 & &\texttt{I} &\texttt{C} & \texttt{A} & \texttt{F} &\texttt{K} & & & & & & & &    \\ \hline
& & & \texttt{J}&\texttt{D} & \texttt{B} & \texttt{E}  &\texttt{L} & & & & & & &   \\ \hline
& & & &\texttt{G} &\texttt{C} & \texttt{A} & \texttt{F} &\texttt{M} & & & & & &   \\ \hline
& & & & &\texttt{H} &\texttt{D} & \texttt{B} & \texttt{E} &\texttt{N} & & & & &   \\ \hline
& & & & & & \texttt{I}&\texttt{C} & \texttt{A} & \texttt{F} &\texttt{K} & & & &    \\ \hline
& & & & & & & \texttt{J}&\texttt{D} & \texttt{B} & \texttt{E} &\texttt{L} & & &    \\ \hline
& & & & & & & &\texttt{G} &\texttt{C} & \texttt{A} & \texttt{F} &\texttt{M} & &     \\ \hline
& & & & & & & & & \texttt{H}&\texttt{D} & \texttt{B} & \texttt{E} &\texttt{N} &     \\ \hline
& & & & & & & & & &\texttt{I} &\texttt{C} & \texttt{X} & \texttt{F} & \texttt{X}    \\ \hline
\texttt{L}& & & & & & & & & & & \texttt{J}&\texttt{D} & \texttt{B} & \texttt{E}    \\ \hline
\texttt{X} &\texttt{X} & & & & & & & & & & &\texttt{X} & \texttt{C} & \texttt{X}     \\ \hline
\end{tabular}
\end{center}
\end{footnotesize}
\caption{Scheme of construction with $n=15$.}\label{fig4}
\end{figure}

\begin{description}
\item[Row $1$] We add  three \textit{ad hoc} values and the first  of the \texttt{K} diagonal with the last  of the
\texttt{H} diagonal:  $$n-3n+\frac{17n+9}{4}-\frac{13n+13}{4}+(n+1)=0.$$
\item[Row $2$] There are three \textit{ad hoc} values plus the first of the \texttt{E} diagonal as well as the first of
the \texttt{L} diagonal. The sum is $$(n+2)+(n-1)-2n+5n-(5n+1)=0.$$
\item[Row $3$ to $n-3$] Consider the row $r$, there are four cases according to the congruence class of $r$ modulo $4$.
If $r\equiv 3 \pmod 4$ write $r=3+4i$ where $i\in \left[0, \frac{n-7}{4}\right]$. Notice that from the \texttt{G},
\texttt{C}, \texttt{A}, \texttt{F} and \texttt{M} diagonal cells we get the following sum:
$$\left (-\frac{15n+7}{4}+i\right)+(2n+2 +4i)+\left
(\frac{n-3}{2}-2i\right)-(2n+3+4i)+\left(\frac{13n+17}{4}+i\right)=0.$$
If $r\equiv 0 \pmod 4$ write $r=4+4i$ where $i\in \left[0, \frac{n-7}{4}\right]$. Notice that from the \texttt{H},
\texttt{D}, \texttt{B}, \texttt{E} and \texttt{N} diagonal cells we get the following sum:
$$-(3n+4+i)+(2n-1 -4i)-(n-2-2i)-(2n-2-4i)+(4n+1-i)=0.$$
If $r\equiv 1 \pmod 4$ write $r=5+4i$ where $i\in \left[0, \frac{n-11}{4}\right]$. Notice that from the \texttt{I},
\texttt{C}, \texttt{A}, \texttt{F} and \texttt{K} diagonal cells we get the following sum:
$$\left(-\frac{19n-1}{4}+i\right)+(2n+4+4i)+\left(\frac{n-5}{2}-2i\right)-(2n+5+4i)+\left(\frac{17n+13}{4}+i\right)=0.$$
If $r\equiv 2 \pmod 4$ write $r=6+4i$ where $i\in \left[0, \frac{n-11}{4}\right]$. Notice that from the \texttt{J},
\texttt{D}, \texttt{B}, \texttt{E} and \texttt{L} diagonal cells we get the following sum:
$$-(4n+3+i)+(2n-3 -4i)-(n-3-2i)-(2n-4-4i)+(5n-1-i)=0.$$
\item[Row $n-2$] This row contains two \textit{ad hoc} values, the last of the \texttt{I} diagonal, the
$\frac{n-3}{2}$-th element of the \texttt{C} diagonal and the last of \texttt{F} diagonal. The sum is
    $$-\frac{9n+3}{2}+(3n-3)-\frac{n-1}{2}-(3n-2)+(5n+2)=0.$$
\item[Row $n-1$] We add the last elements of  the \texttt{L}, \texttt{J},  \texttt{D},  \texttt{B} and \texttt{E}
diagonals: $$\frac{19n+3}{4}-\frac{17n+5}{4}+(n+4)-\frac{n+1}{2}-(n+3)=0.$$
\item[Row $n$] This row contains four \textit{ad hoc} values and the last of \texttt{C} diagonal. The sum is
$$-(3n+1)+(3n+3)-(3n+2)+(3n-1)+1=0.$$
   \end{description}
So we have shown that all row sums are zero. Next we check that the columns all add to zero.
\begin{description}
\item[Column $1$] We add  three \textit{ad hoc} values and the first  of the \texttt{G} diagonal with the last  of the
\texttt{L} diagonal:
$$n+(n+2)-\frac{15n+7}{4}+\frac{19n+3}{4}-(3n+1)=0.$$
\item[Column $2$] There are three \textit{ad hoc} values plus the first of the \texttt{C} diagonal as well as the first
of the \texttt{H} diagonal. The sum is $$-3n+(n-1)+(2n+2)-(3n+4)+(3n+3)=0.$$
\item[Column $3$ to $n-3$] Consider the column $c$, there are four cases according to the congruence class of $c$ modulo
$4$.
If $c\equiv 3 \pmod 4$ write $c=3+4i$ where $i\in \left[0, \frac{n-7}{4}\right]$. Notice that from the \texttt{K},
\texttt{E}, \texttt{A}, \texttt{D} and \texttt{I} diagonal cells we get the following sum:
$$\left(\frac{17n+9}{4}+i\right)-(2n-4i)+\left (\frac{n-3}{2}-2i\right)+(2n-1-4i)-\left(\frac{19n-1}{4}-i\right)=0.$$
If $c\equiv 0 \pmod 4$ write $c=4+4i$ where $i\in \left[0, \frac{n-7}{4}\right]$. Notice that from the \texttt{L},
\texttt{F}, \texttt{B}, \texttt{C} and \texttt{J} diagonal cells we get the following sum:
$$(5n-i)-(2n+3+4i)-(n-2-2i)+(2n+4+4i)-(4n+3+i)=0.$$
If $c\equiv 1 \pmod 4$ write $c=5+4i$ where $i\in \left[0, \frac{n-11}{4}\right]$. Notice that from the \texttt{M},
\texttt{E}, \texttt{A}, \texttt{D} and \texttt{G} diagonal cells we get the following sum:
$$\left(\frac{13n+17}{4}+i\right)-(2n-2-4i)+\left(\frac{n-5}{2}-2i\right)+(2n-3-4i)-\left(\frac{15n+3}{4}-i\right)=0.$$
If $c\equiv 2 \pmod 4$ write $c=6+4i$ where $i\in \left[0, \frac{n-11}{4}\right]$. Notice that from the \texttt{N},
\texttt{F}, \texttt{B}, \texttt{C} and \texttt{H} diagonal cells we get the following sum:
$$(4n+1-i)-(2n+5+4i)-(n-3-2i)+(2n+6+4i)-(3n+5+i)=0.$$
\item[Column $n-2$] This column contains two \textit{ad hoc} values, the last of the \texttt{M} diagonal, the
$\frac{n-3}{2}$-th element of the \texttt{E} diagonal and the last of \texttt{D} diagonal. The sum is
    $$\frac{7n+5}{2}-(n+5)-\frac{n-1}{2}+(n+4)-(3n+2)=0.$$
\item[Column $n-1$] We add the last elements of  the \texttt{H}, \texttt{N},  \texttt{F},  \texttt{B} and \texttt{C}
diagonals:
$$-\frac{13n+13}{4}+\frac{15n+11}{4}-(3n-2)-\frac{n+1}{2}+(3n-1)=0.$$
\item[Column $n$] This column contains four \textit{ad hoc} values and the last of \texttt{E} diagonal. The sum is
$$(n+1)-(5n+1)+(5n+2)-(n+3)+1=0.$$
\end{description}
So we have shown that each column sums to $0$. Now we consider the support of $A$:
$$\begin{array}{rcl}
supp(A) & = & \{1\}\cup [2,\frac{n-3}{2}]_{(\texttt{A})}\cup \{\frac{n-1}{2}\}\cup
[\frac{n+1}{2},n-2]_{(\texttt{B})}\cup \\[3pt]
&& \{ n-1, n, n+1, n+2\}  \cup [n+3,2n]_{(\texttt{E},\texttt{D})}\cup
[2n+2,3n-1]_{(\texttt{C},\texttt{F})}\cup \\[3pt]
&& \{3n, 3n+1,3n+2,3n+3\} \cup [3n+4,\frac{13n+13}{4}]_{(\texttt{H})} \cup\\[3pt]
&& [\frac{13n+17}{4},\frac{7n+5}{2}]_{(\texttt{M})}\cup
[\frac{7n+7}{2},\frac{15n+7}{4}]_{(\texttt{G})}\cup [\frac{15n+11}{4},4n+1]_{(\texttt{N})}\cup\\[3pt]
&& [4n+3,\frac{17n+5}{4}]_{(\texttt{J})}  \cup[\frac{17n+9}{4},\frac{9n+1}{2}]_{(\texttt{K})}\cup
[\frac{9n+3}{2},\frac{19n-1}{4}]_{(\texttt{I})}\cup\\[3pt]
&&[\frac{19n+3}{4},5n]_{(\texttt{L})}\cup \{5n+1,5n+2\}\\[3pt]
& =& [1,5n+2]\setminus \{2n+1,4n+2\}.
 \end{array}$$
Thus, $A$ is an integer cyclically $5$-diagonal $\H_5(n;5)$ for  $n\equiv 3 \pmod 4$.
\end{proof}

\begin{ex}
Following the proof of Proposition \ref{prop:5} we obtain the
integer $\H_5(15;$ $5)$ below.
\begin{center}
\begin{tiny}
$\begin{array}{|r|r|r|r|r|r|r|r|r|r|r|r|r|r|r|}
\hline 15 & -45 & 66  &  &  &  &  & & & &&&& -52 & 16  \\
\hline  17 & 14 & -30 & 75 &  &  &  &  & & &&&&& -76 \\
\hline  -58 & 32 & 6 & -33 & 53  &  &  & & &&&&& & \\
\hline   & -49  & 29 & -13 & -28 & 61 &  & &&&& & & &\\
\hline   &  & -71 & 34   & 5 & -35  & 67 & & & & & & & &\\
\hline   &  &  & -63  &  27 & -12 & -26 & 74 & & & & & & & \\
\hline    &  &  &  & -57 & 36 & 4 & -37 & 54 & & & & & & \\
\hline   &  &  &  &  & -50  & 25 & -11  & -24 & 60& & & & &\\
\hline   &  &  &   &  &  & -70 & 38 & 3 & -39 & 68 & & & & \\
\hline   &  &  &   &  &  & &-64 & 23 & -10 & -22 & 73  & & & \\
\hline   &  &  &   &  &  & & &-56 & 40 & 2 & -41 & 55 & &  \\
\hline   &  &  &   &  &  & & & &-51 & 21 & -9 & -20 & 59  & \\
\hline   &  &  &   &  &  & & & & &-69 & 42 & -7 & -43 & 77 \\
\hline   72  &  &   &  &  & & & & & & &-65 & 19 & -8 & -18 \\
\hline   -46& 48  &  &   &  &  & & & & & &  & -47 & 44 & 1 \\
\hline
\end{array}$
\end{tiny}
\end{center}
\end{ex}

\begin{prop}\label{prop:6}
There exists an integer cyclically $(2,3)$-diagonal  $\H_6(n;6)$ for every even $n\geq 6$.
\end{prop}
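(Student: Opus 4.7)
The plan is to mirror the structure of Proposition \ref{prop:6Finta}: produce, for each even $n\geq 6$, an $n\times n$ shiftable cyclically $(2,3)$-diagonal array $A$ whose $6$ filled cells per row and per column all sum to $0$ and whose support is exactly $[1,6n+3]\setminus\{2n+1,4n+2,6n+3\}$ (the correct support for an integer $\H_6(n;6)$, by Corollary \ref{condnecc} and the discussion of the support in Proposition \ref{prop:necc}). The cyclically $(2,3)$-diagonal shape means $n$ must be even and the array decomposes into $n/2$ horizontal strips, each a $2\times 6$ block shifted $2$ columns to the right of the previous one (with wrap-around at the last few strips). I would assemble $A$ strip by strip out of three or four $2\times 6$ building blocks, analogous to the templates $U,V_5,V_9$ used in Proposition \ref{prop:6Finta}.

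First I would fix a ``generic'' shiftable $2\times 6$ template $U'$ with row sums $0$ and column-sum vector $(c_1,c_2,c_3,c_4,c_5,c_6)$ satisfying
\[
c_1+c_3+c_5=0,\qquad c_2+c_4+c_6=0,
\]
so that any column of the final array, which receives contributions from positions $(1,2)$, $(3,4)$ and $(5,6)$ of three successive strips, automatically sums to zero. Shifting $U'$ by $U'\pm 12i$ for $i=0,1,\dots$ produces strips whose supports are consecutive blocks of $12$ integers in $[1,6n]$. This naturally covers $[1,6n]$ but misses $6n+1,6n+2,6n+3$ and (worse) includes the three forbidden values $2n+1,4n+2,6n+3$. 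Then I would design two or three ``exceptional'' templates $V',W'$ (and possibly $W''$) with the same column-sum vector as $U'$, but whose supports omit one of the forbidden values and pick up a substitute value slightly above the default $12$-run, exactly as $V_5$ and $V_9$ did in Proposition \ref{prop:6Finta}. Placing these exceptional strips at the three positions where $2n+1$, $4n+2$ and $6n+3$ would otherwise appear, and shifting the later generic strips by $+1$, $+2$ or $+3$ as needed, gives the desired support.

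Because the exact position within the tiling where the forbidden values fall depends on $n\bmod 12$, I expect the construction to split into a few residue classes (for instance $n\equiv 0,2,4,6,8,10\pmod{12}$), each with its own prescription of which strip index carries $U'$, $V'$ or $W'$; this is the analogue of the three sub-cases $n=12m$, $n=12m+4$, $n=12m+8$ appearing in Proposition \ref{prop:6Finta}. The very small cases, most notably $n=6$ (only three strips, where ``generic'' versus ``exceptional'' is meaningless) and possibly $n=8,10$, will almost certainly need to be handled by an explicit \emph{ad hoc} $2\times 6$ tiling that is exhibited and verified directly.

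The hard part is the combinatorial design of the exceptional templates $V',W'$: they must (i) share the column-sum vector of $U'$ so the global column sums stay zero, (ii) carry the precise ``swap'' that removes one of $2n+1,4n+2,6n+3$ and inserts exactly one substitute element at the top of the support interval, and (iii) not collide with any other strip's support, even under wrap-around. Once such templates are in hand, the remaining tasks—checking row-sum zero within each strip, column-sum zero across the three strips meeting at each column (using the $c_1+c_3+c_5=0$ and $c_2+c_4+c_6=0$ identities), confirming the cyclically $(2,3)$-diagonal skeleton, and verifying $supp(A)=[1,6n+3]\setminus\{2n+1,4n+2,6n+3\}$ by unioning the per-strip supports—are routine bookkeeping, essentially identical to the verification performed at the end of the proof of Proposition \ref{prop:6Finta}.
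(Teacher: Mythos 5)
Your proposal is essentially the paper's own proof: the array is built strip by strip from $2\times 6$ blocks $W\pm x$ whose common column-sum vector $(2,1,-4,1,2,-2)$ satisfies exactly the identities $c_1+c_3+c_5=c_2+c_4+c_6=0$ you isolate, with exceptional strips placed to dodge the forbidden values $2n+1$ and $4n+2$. The one piece you defer as ``the hard part'' is already in hand: the paper reuses the templates $U$, $V_5$, $V_9$ of Proposition \ref{prop:6Finta} verbatim, which lets the case analysis collapse to the three classes $n=6m$, $6m+2$, $6m+4$ (rather than six classes modulo $12$), with $n=6$ covered by the generic tiling and no ad hoc small cases needed.
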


\begin{proof}
Let $U$, $V_5$ and $V_9$ be the three $2\times 6$ arrays defined in the proof of  Proposition \ref{prop:6Finta}.
Also here we have to distinguish three cases.

If $n=6m$, let $A$ be a cyclically $(2,3)$-diagonal  $n\times n$ p.f. array whose strips $S_i$ are:
$$S_i =\left\{\begin{array}{ll}
U\pm 12i & \textrm{ if }  i \in [0,m-1],\\
U\pm (1+12i) & \textrm{ if }  i \in [m,2m-1],\\
U\pm (2+12i) & \textrm{ if }  i \in [2m,3m-1].\\
 \end{array}\right.$$
We obtain
$$\begin{array}{rcl}
   supp(A) & = & \bigcup_{i=0}^{m-1}[1+12i,12+12i]\cup \bigcup_{i=m}^{2m-1}[2+12i,13+12i] \cup \\[3pt]
&& \bigcup_{i=2m}^{3m-1}[3+12i,14+12i]\\[3pt]
& =& [1,12m]\cup [12m+2,24m+1] \cup [24m+3,36m+2].
\end{array}$$

If $n=6m+2$, let $A$ be a cyclically $(2,3)$-diagonal $n\times n$ p.f. array whose strips $S_i$ are:
$$S_i =\left\{\begin{array}{ll}
U\pm 12i & \textrm{ if }  i \in [0,m-1],\\
V_5\pm 12i & \textrm{ if }  i=m,\\
U\pm (1+12i) & \textrm{ if }  i \in [m+1,2m-1],\\
V_9\pm (1+12i)  & \textrm{ if }  i=2m,\\
U\pm (2+12i) & \textrm{ if }  i \in [2m+1,3m].\\
 \end{array}\right.$$
It follows that
$$\begin{array}{rcl}
   supp(A) & = & \bigcup_{i=0}^{m-1}[1+12i,12+12i]\cup [12m+1,12m+4]\cup \\[3pt]
   && [12m+6,12m+13] \cup    \bigcup_{i=m+1}^{2m-1}[2+12i,13+12i] \cup \\[3pt]
&& [24m+2, 24m+9] \cup [24m+11,24m+14] \cup \\[3pt]
   && \bigcup_{i=2m+1}^{3m}[3+12i,14+12i]\\[3pt]
& =& [1,12m+4]\cup [12m+6,24m+9] \cup [24m+11,36m+14].
  \end{array}$$

If $n=6m+4$, let $A$ be a cyclically $(2,3)$-diagonal $n\times n$ p.f. array whose strips $S_i$ are:
$$S_i =\left\{\begin{array}{ll}
U\pm 12i & \textrm{ if }  i \in [0,m-1],\\
V_9\pm 12i & \textrm{ if }  i=m,\\
U\pm (1+12i) & \textrm{ if }  i \in [m+1,2m],\\
V_5\pm (1+12i)  & \textrm{ if }  i=2m+1,\\
U\pm (2+12i) & \textrm{ if }  i \in [2m+2,3m+1].\\
 \end{array}\right.$$
It follows that
$$\begin{array}{rcl}
   supp(A) & = & \bigcup_{i=0}^{m-1}[1+12i,12+12i]\cup [12m+1,12m+8]\cup \\[3pt]
   && [12m+10,12m+13] \cup    \bigcup_{i=m+1}^{2m}[2+12i,13+12i] \cup \\[3pt]
&& [24m+14, 24m+17] \cup [24m+19,24m+26] \cup \\[3pt]
   && \bigcup_{i=2m+2}^{3m+1}[3+12i,14+12i]\\[3pt]
& =& [1,12m+8]\cup [12m+10,24m+17] \cup [24m+19,36m+26].
  \end{array}$$

In all three cases, we have
$$supp(A)=[1,6n+2]\setminus\{2n+1, 4n+2\}$$
and so the associated p.f. array $A$ we constructed is an integer $\H_6(n;6)$.
\end{proof}

\begin{ex}
Following the proof of Proposition \ref{prop:6} we obtain the
integer $\H_6(10;$ $6)$ below.
  \begin{center}
\begin{footnotesize}
$\begin{array}{|r|r|r|r|r|r|r|r|r|r|}\hline
-1 & 5 & 2 & -7 & -9 & 10 &  &  &  & \\\hline
3 & -4 & -6 & 8 & 11 & -12 &  &  &  & \\\hline
 &  & -13 & 17 & 14 & -19 & 25 & -24 &  &\\ \hline
 &  & 15 & -16 & -18 & 20 & -23 & 22 &  & \\\hline
 &  &  &  & -26 & 30 & 27 & -32 & -34 & 35\\\hline
 &  &  &  & 28 & -29 & -31 & 33 & 36 & -37\\\hline
41 & -45 &  &  &  &  & -38 & 47 & 44 & -49\\\hline
-39 & 43 &  &  &  &  & 40 & -46 & -48 & 50\\\hline
52 & -57 & -59 & 60 &  &  &  &  & -51 & 55\\\hline
-56 & 58 & 61 & -62 &  &  &  &  & 53 & -54\\\hline
\end{array}$
\end{footnotesize}
\end{center}
\end{ex}

\section{Conclusions}\label{sec:conclusion}

Now we are ready to prove our main result.

\begin{proof}[Proof of Theorem \ref{thm:esistenza}]
We split the proof into 4 cases according to the congruence class of  $k$ modulo $4$.

Case 1. Let $k\equiv 3 \pmod 4$. An integer cyclically $3$-diagonal $\H_3(n;3)$ for $n\equiv 3 \pmod 4$
and $n\equiv 0 \pmod 4$ has been constructed in Proposition \ref{prop:3odd} and in Proposition \ref{prop:3even}, respectively.
The result follows applying inductively Theorem \ref{prop:ExtDiag}(1).

Case 2.  Let $k\equiv 0 \pmod 4$. An integer cyclically $4$-diagonal $\H_4(n;4)$ for any $n\geq4$
has been constructed in Proposition \ref{prop:4}.
As before, the result follows applying inductively Theorem \ref{prop:ExtDiag}(1).

Case 3. Let $k\equiv 1 \pmod 4$.
If $n\equiv 3 \pmod 4$ an integer cyclically $5$-diagonal $\H_5(n;5)$
has been constructed in Proposition \ref{prop:5}.
As before, the result follows applying inductively Theorem \ref{prop:ExtDiag}(1).
If $n\equiv 0\pmod 4$ and $n > k\geq9$, by Case 1, there exists an integer cyclically $(k-6)$-diagonal $\H_{k-6}(n;k-6)$,
which can be viewed also as an integer cyclically $(2,\frac{k-5}{2})$-diagonal  $\H_{k-6}(n;k-6)$ by Remark \ref{cyc}.
Since $n\equiv 0\pmod 4$ and $n\geq k+1$, the result follows applying Theorem \ref{prop:ExtDiag}(3).

Case 4. Let $k\equiv 2 \pmod 4$. An integer cyclically $(2,3)$-diagonal $\H_6(n;6)$ for $n$ even with $n\geq 6$
 has been constructed in Proposition \ref{prop:6}.
The result follows applying inductively Theorem \ref{prop:ExtDiag}(2).
\end{proof}

As already remarked in Section \ref{sec:Intro} we leave open the existence problem of an integer $\H_k(n;k)$
only for $k=5$ and $n\equiv0\pmod4$. For this class we have two examples:
an integer cyclically $(2,3)$-diagonal $\H_5(8;5)$ is given in Example \ref{ex:165}, while
an integer cyclically $(2,3)$-diagonal $\H_5(16;5)$
is given below.

 \begin{center}
\begin{Tiny}
$\begin{array}{|r|r|r|r|r|r|r|r|r|r|r|r|r|r|r|r|}
\hline  8&& -65& 81&&&&&&&&&&& 55& -79 \\
\hline& 16& 82& -58&&&&&&&&&&& -80& 40 \\
\hline 60& -77& -6&& -38& 61&&&&&&&&&& \\
\hline -78& 53&& -14& 62& -23&&&&&&&&&& \\
\hline&& 21& -31& -5&& 57& -42&&&&&&&& \\
\hline&& -32& 22&& -13& -41& 64&&&&&&&& \\
\hline&&&& -69& 51& 7&& -17& 28&&&&&& \\
\hline&&&& 50& -76&& 15& 29& -18&&&&&& \\
\hline&&&&&& 45& -67& 3&& 39& -20&&&& \\
\hline&&&&&& -68& 30&& 11& -19& 46&&&& \\
\hline&&&&&&&& 56& -70& 2&& -24& 36&& \\
\hline&&&&&&&& -71& 49&& 10& 37& -25&& \\
\hline&&&&&&&&&& -48& 27& 1&& -54& 74 \\
\hline&&&&&&&&&& 26& -63&& 9& 75& -47 \\
\hline -34& 43&&&&&&&&&&& 59& -72& 4& \\
\hline 44& -35&&&&&&&&&&& -73& 52&& 12 \\
\hline
\end{array}$
\end{Tiny}
\end{center}

Our existence result about relative Heffter arrays implies the existence of new pairs of
orthogonal cycle decompositions. To describe how this result is obtained, we first recall
the following conjecture.

\begin{conj}\cite[Conjecture 3]{CMPPSums}\label{Co}
Let $(G,+)$ be an abelian group. Let $A$ be a finite subset of $G\setminus\{0\}$ such that no $2$-subset $\{x,-x\}$ is contained in $A$
and with the property that $\sum_{a\in A} a=0$. Then there exists a simple ordering of the elements of $A$.
\end{conj}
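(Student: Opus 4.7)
The statement is Conjecture 3 of \cite{CMPPSums}, hence open, and the proposal below is necessarily a strategy rather than a guaranteed proof. The plan is to combine a structural reduction with an inductive exchange argument, reducing to the finite case and then splitting elements one at a time.

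\emph{Reduction.} Since $A$ is finite, replace $G$ by $\langle A\rangle$, which is finitely generated abelian. By the structure theorem, $\langle A\rangle \cong \Z^r \oplus T$ with $T$ finite. In the torsion-free case $r\geq 1$, I would project via a $\Z$-linear map $\varphi:\langle A\rangle\to \Z$ chosen generically enough that $\varphi$ is injective on $A\cup(-A)$; this map automatically preserves the zero-sum condition and the ``no $\{x,-x\}$'' condition. Thus the torsion-free part reduces to $G=\Z$, where any such $A$ can be ordered by interleaving positive and negative elements so that partial sums stay on the correct side of $0$ until the last step; this is essentially a greedy sort and is in the spirit of the results cited in \cite{AL, ADMS, HOS, O}. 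The essential case is therefore $G=T$ finite abelian.

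\emph{Inductive step.} Induct on $|A|$, with the base $|A|=3$ handled directly: there the only nontrivial check is that $(a_1,a_2,a_3)$ (with $a_1+a_2+a_3=0$) has $a_1\neq a_1+a_2$, which holds because $a_2\neq 0$. For $|A|\geq 4$, attempt to choose $a,b\in A$ with $a+b\notin\{0\}\cup A\cup(-A)$ and replace the pair $\{a,b\}$ by the single element $c=a+b$, producing a set $A'$ of size $|A|-1$ that still sums to zero and still avoids $\{x,-x\}$ pairs. By induction $A'$ admits a simple ordering $\omega'$. Split the slot of $c$ into two consecutive slots $(a,b)$ or $(b,a)$; simplicity of the resulting ordering of $A$ at every partial sum except the new intermediate one is inherited from $\omega'$, and one then argues that at least one of the two orientations avoids a collision at the inserted position.

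The main obstacle is twofold: first, a suitable pair $(a,b)$ must exist with $a+b\notin\{0\}\cup A\cup(-A)$, which can fail when $A$ is very ``sum-saturated'' in a small group; second, even when such a pair exists, both orientations $(a,b)$ and $(b,a)$ might introduce a forbidden partial-sum coincidence, especially in groups with heavy $2$-torsion where $x=-x$ for many elements. I expect a complete resolution to require a richer exchange operation than a single pair merge (for instance, simultaneously relabelling a short consecutive block of $\omega'$), together with a separate ad hoc analysis when $G$ contains a large elementary abelian $2$-subgroup; the partial results of \cite{AL, ADMS, CMPPSums, HOS, O} suggest that such a case-by-case strengthening is the right route, but the general statement remains genuinely hard.
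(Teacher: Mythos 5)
You should know at the outset that the paper does not prove this statement: it is quoted verbatim as an open conjecture (Conjecture 3 of \cite{CMPPSums}), and the only thing the paper uses is its verification for sets of size less than $10$, which guarantees that every row and column of the arrays $\H_k(n;k)$ with $k\le 9$ admits a simple ordering. So there is no proof in the paper against which to measure your attempt, and your framing of the proposal as a strategy rather than a proof is the honest and correct stance.

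That said, the strategy as written has concrete gaps beyond the two you flag yourself. In the torsion-free reduction, the claim that a zero-sum subset of $\Z\setminus\{0\}$ with no $\{x,-x\}$ pair ``can be ordered by interleaving positive and negative elements so that partial sums stay on the correct side of $0$'' conflates two different requirements: a sign-alternating greedy order controls the sign of the partial sums, but simplicity demands that all partial sums be pairwise \emph{distinct}, and a partial sum can revisit an earlier nonzero value without ever hitting $0$; so even the case $G=\Z$ is not settled by what you write. Moreover, your projection $\varphi$ only applies when $\langle A\rangle$ is free: if $\langle A\rangle\cong\Z^r\oplus T$ with $T\neq 0$, any homomorphism to $\Z$ kills $T$, so $\varphi$ fails to be injective on $A\cup(-A)$ whenever two elements differ by a nonzero torsion element, and the mixed case is left untreated. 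In the inductive step, besides the two failure modes you name, note that the slot of $c=a+b$ in $\omega'$ is imposed by the inductive hypothesis, so you only ever get two candidate values $s+a$ and $s+b$ for the new intermediate partial sum against up to $|A|-1$ forbidden values; without strengthening the inductive hypothesis (for instance, producing many essentially different simple orderings of $A'$, or controlling where $c$ sits in one of them) there is no room to manoeuvre. None of these gaps is repaired in the proposal, so it does not establish the conjecture --- which, to be clear, remains open and is treated as such by the paper.
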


\begin{prop}
For $n\geq k$,
there exist two orthogonal cyclic $k$-cycle decompositions of $K_{(2n+1)\times k}$ in each of the following cases:
\begin{itemize}
  \item[(1)] $k=3,7,9$ for  $n\equiv0,3\pmod4$;
  \item[(2)] $k=5$ for  $n\equiv 3 \pmod 4$;
  \item[(3)] $k=4,8$ for every $n$;
  \item[(4)] $k=6$ for every even $n$.
\end{itemize}
\end{prop}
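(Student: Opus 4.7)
The plan is to combine Theorem \ref{thm:esistenza} with Proposition \ref{HeffterToDecompositions}. In each of the four cases listed, Theorem \ref{thm:esistenza} supplies an integer $\H_k(n;k)$; applied to any \emph{simple} such array, Proposition \ref{HeffterToDecompositions} produces a pair of orthogonal cyclic $k$-cycle decompositions of $K_{\frac{2nk+k}{k}\times k}=K_{(2n+1)\times k}$, one from the row orderings and one from the column orderings. So the proof reduces to (i) citing Theorem \ref{thm:esistenza} in each case, and (ii) exhibiting simple orderings for the specific arrays produced there.

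For $k\in\{3,4,5\}$ (which covers part of cases (1), (2), (3)) there is nothing to verify, since every relative Heffter array with $k\leq 5$ is simple by the observation in Section \ref{sec:relativeDF}. The real work is the subcases $k\in\{6,7,8,9\}$. For $k=6$, the construction of Proposition \ref{prop:6} assembles the array from sign-adjusted and $c$-shifted copies of the three $2\times 6$ blocks $U$, $V_5$, $V_9$. Every row of the full array is, up to a translation of the absolute values, one of these six patterns, and every column meets at most three consecutive strips, so its six entries are the concatenation of at most three two-element pieces, each read from one block. I would carry out a finite case check: for each possible row/column pattern, exhibit a reordering whose partial sums $s_1,\dots,s_5$ are nonzero and mutually distinct. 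The shiftability of the blocks keeps this check uniform in $n$, since shifting only affects partial sums through the running excess of positive over negative entries, which is bounded and controllable.

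For $k\in\{7,9\}$ the array is obtained via Theorem \ref{prop:ExtDiag}(1), by superimposing one of the shiftable cyclically $4$-diagonal arrays of Propositions \ref{prop:4} or \ref{prop:4B} onto an integer $\H_{k-4}(n;k-4)$; so a row (respectively column) splits as a $(k-4)$-entry piece coming from the smaller simple array plus a $4$-entry piece from the shiftable part. One would fix a simple ordering of the shorter piece (automatic, since $k-4\leq 5$), then concatenate with an ordering of the four shiftable entries chosen to avoid the forbidden partial sums; the four shiftable entries come in two $\pm$-pairs whose magnitudes are well-separated, giving enough freedom to dodge the short list of collisions that must be avoided. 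The case $k=8$ is handled analogously using the shiftable $(2,2)$-diagonal array of Proposition \ref{prop:4B2}.

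The main obstacle is exactly this simplicity verification for $k\in\{6,7,8,9\}$: since Conjecture \ref{Co} is only conjectural, I cannot quote it, but must instead produce explicit simple orderings (or a short existence lemma) tailored to each construction. Because each row or column has at most nine entries and the building blocks are few, the case analysis is finite and structured, and once it is completed Proposition \ref{HeffterToDecompositions} finishes the proof immediately.
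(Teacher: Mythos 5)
Your overall skeleton is exactly the paper's: combine Theorem \ref{thm:esistenza} with Proposition \ref{HeffterToDecompositions}, and reduce everything to checking that the arrays constructed are simple. The divergence is in how that simplicity is established, and here your proposal has a genuine gap. You dismiss Conjecture \ref{Co} as ``only conjectural'' and hence unusable, but the paper's point is precisely that the conjecture has already been \emph{verified} for every zero-sum set of size less than $10$ in \cite{CMPPSums}; since each row and column of an $\H_k(n;k)$ is such a set (no pair $\{x,-x\}$ can occur, by condition $(\rm{b_1})$), every array with $k\leq 9$ is automatically simple, and the proposition follows in one line. Missing this, you substitute an explicit case analysis for $k\in\{6,7,8,9\}$ --- but you never carry it out, so as written the crucial step is a plan rather than a proof.

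Moreover, the sketch of that plan is not obviously sound as stated. For $k=6$, a column of the array of Proposition \ref{prop:6} draws its six entries from blocks carrying \emph{different} shifts $x_1,x_2,x_3$, so the condition $s_b\neq s_c$ involves a signed combination of those shifts together with the base values in $[1,13]$; the claim that shiftability makes the check ``uniform in $n$'' needs an actual argument that no cancellation among the shifts can occur. For $k\in\{7,9\}$, the four entries added by Propositions \ref{prop:4} and \ref{prop:4B} in a given row are not ``two $\pm$-pairs'' of comparable magnitude --- they are four well-separated values roughly $i$, $-(n+1+i)$, $-(2n+4+i)$, $3n+4+i$ --- so the combinatorics of avoiding collisions with the partial sums of the $(k-4)$-entry piece is different from what you describe and would have to be worked out concretely. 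None of this is infeasible, but it is real work that the paper avoids entirely by citing the finite verification of Conjecture \ref{Co}; without either that citation or the completed case analysis, your proof is incomplete.
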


\begin{proof}
In \cite{CMPPSums}, we verified Conjecture \ref{Co} for any set of size less than $10$.
Hence for $k\leq 9$ all the $\H_k(n;k)$ here constructed are simple for any $n$.
The result follows from Theorem \ref{thm:esistenza} and Proposition \ref{HeffterToDecompositions}.
\end{proof}

If Conjecture \ref{Co} were true we would have two orthogonal cyclic $k$-cycle decompositions
of $K_{(2n+1)\times k}$ for any pair $(n,k)$
for which we have constructed an integer $\H_k(n;k)$.

\section*{Acknowledgements}

This research was partially supported by Italian Ministry of Education, Universities and Research under Grant
PRIN 2015 D72F16000790001 and by INdAM-GNSAGA.

\end{document}